\newtheorem{remark}{Remark}[section]
\newtheorem{assumption}{Assumption}
\newtheorem{assumptionPrime}{Assumption}
\newtheorem{cuslemma}{Lemma}[section]
\patchcmd{\SetTagPlusEndMark}{$}{}{}{}
\patchcmd{\SetTagPlusEndMark}{$}{}{}{}
\newtheoremstyle{nospacethm}%
{\item[\hskip\labelsep \theorem@headerfont ##1 ##2\theorem@separator]}%
\theoremstyle{nospacethm}
\newtheorem{reqbase}{Requirement}
\renewcommand{\thereqbase}{BI\arabic{reqbase}}
\crefname{reqbase}{Requirement}{Requirements}
\newcounter{myalg}
  {
    \needspace{2\baselineskip}
    \noindent \rule{\linewidth}{1pt} \endgraf
    \refstepcounter{myalg}
    \centering \textsc{Algorithm}~\themyalg%
    \ifthenelse{\isempty{#1}}{}{:\ #1}
  }{
  \noindent \rule{\linewidth}{1pt}
  }%
\title{
An Infeasible-Start Framework for
Convex Quadratic Optimization, 
with Application to Constraint-Reduced
Interior-Point Methods
\thanks
	{This manuscript has been authored, in part, by UT-Battelle, LLC, under Contract No. DE-AC0500OR22725 with the U.S. Department of Energy. The United States Government retains and the publisher, by accepting the article for publication, acknowledges that the United States Government retains a non-exclusive, paid-up, irrevocable, world-wide license to publish or reproduce the published form of this manuscript, or allow others to do so, for the United States Government purposes. The Department of Energy will provide public access to these results of federally sponsored research in accordance with the DOE Public Access Plan (\texttt{http://energy.gov/downloads/doe-public-access-plan}).}}
\date{\today}
\author{M. Paul Laiu%
\thanks{Computational and Applied Mathematics Group,
	Computer Science and Mathematics Division,
	Oak Ridge National Laboratory,
	Oak Ridge, TN 37831 USA,
(\texttt{laiump@ornl.gov}).}
\and
Andr{\'e} L.~Tits%
\thanks{Department of Electrical and Computer Engineering
  \& Institute for Systems Research,
  University of Maryland
  College Park, MD 20742 USA,
(\texttt{andre@umd.edu}).}
}
\begin{document}

\maketitle

\begin{abstract}
A framework is proposed for solving general convex 
quadratic programs (CQPs) 
from an infeasible starting point by invoking an existing 
{\em feasible-start} algorithm tailored for
{\em inequality}-constrained CQPs.  
The central tool is an exact penalty function scheme equipped with a 
penalty-parameter updating rule.  The feasible-start algorithm
merely has to satisfy certain general requirements, 
and so is the updating rule.  
Under mild assumptions, the framework is proved to converge
on CQPs with both inequality and equality constraints and, 
at a negligible additional cost per iteration,
produces an infeasibility certificate, together with a 
feasible point for an (approximately) $\ell_1$-least relaxed 
feasible problem when the given problem does not have a feasible solution.
The framework is applied to a feasible-start constraint-reduced
interior-point algorithm previously proved to be highly performant
on problems with many more constraints than variables (``imbalanced'').  
Numerical comparison with popular codes (SDPT3, SeDuMi, MOSEK) is 
reported on both randomly generated problems and support-vector 
machine classifier training problems.  
The results show that the former typically outperforms
the latter on imbalanced problems.
\end{abstract}

\begin{keywords}
convex quadratic programming, 
infeasible start,
infeasibility certificate,
constraint reduction, 
interior point,
support-vector machine
\end{keywords}

\begin{AMS}
65K05, 90C05, 90C06, 90C20, 90C51
\end{AMS}

\section{Introduction}
\label{sec:CR_MPC}

Consider a convex quadratic program (CQP)
\begin{equation}
\minimize_{\bx\in\bbR^n} \:
\bff(\bx):=\frac{1}{2}\bx^T\bH\bx+\bc^T\bx \mbox{~s.t.~} \:
\bA\bx \geq \bb,\: \bC\bx=\bd,
\tag{P}
\label{eq:primal_cqp}
\end{equation}
where $\bx\in\bbR^n$ is the vector of optimization variables,
$\bff\colon\bbR^n\to\bbR$
the objective function, with $\bc\in\bbR^n$, 
$\bH\in\bbR^{n\times n}$ symmetric positive semi-definite;
and where
$\bA\in\bbR^{m\times n}$ and $\bb\in\bbR^m$,
$\bC\in\bbR^{p\times n}$ and $\bd\in\bbR^p$,
with $n\geq p$ and $m+p>0$.
Here and elsewhere all inequalities ($\geq$, $\leq$, $>$, $<$)
are meant component-wise.

Most available algorithms for solving such problems belong either to the
interior-point family or to the simplex-like family.  While the most
popular interior-point algorithms do not 
require that an initial feasible point be provided,
simplex algorithms  do:
such feasible points, when not readily available, are typically
obtained by solving an auxiliary linear optimization problem 
(``phase~1'').  Like simplex algorithms, recently proposed 
``constraint-reduced'' interior-point algorithms,
the latest of which (see, e.g.,~\cite{LaiuTits19}) 
were observed to often largely outperform other approaches when the problem 
at hand is severely ``imbalanced'' 
(i.e., with most inequality constraints being inactive at the solution; 
e.g., $m\gg n-p$), 
do require a 
primal-feasible initial point.  While a two-phase approach 
could again be employed here, 
an important drawback of two-phase
approaches is that, in the first phase, the objective function
is altogether ignored, leading to likely computational waste.\footnote{Note
however that, in the context of feasible-direction methods for general
nonlinear optimization problems, methods that craftily combine the two
phases have been known for decades; see~\cite{Polak1979,
Polak1991}.}

Motivated by the above, the aim of the present paper is to propose
an exact-penalty-function-based framework that ``transforms'' 
an available primal-feasible algorithm
into one that accommodates infeasible starts.   While the original
intent was restricted to ``infeasibilizing'' algorithm CR-MPC\footnote{A
constraint-reduced version of Mehrotra's Predictor Corrector \cite{Mehrotra-1992}.}
of~\cite{LaiuTits19}, it has broaden to the development of a scheme
that invokes an essentially arbitrary feasible-start method.

The central component of the framework is an augmented version
of~\eqref{eq:primal_cqp} that involves a vector of relaxation
variables
and an exact penalty function.  Exact penalty functions (i.e.,
penalty functions for which a threshold exists---but is unknown at
the outset---such that, when the penalty parameter exceeds that
threshold, solutions of the penalized problem also solve the
original constrained problem) have been extensively used for
many decades in nonlinear optimization, especially since the
seminal work of A.R.~Conn~\cite{Conn1973};
see, e.g., \cite{ColemanConn82,Conn1998,Antczak09,HassanBaharum19}.
While the adaptive
selection of the penalty parameter is often heuristic, in some
contexts, authors have proposed formal adaptation rules that
guarantee that an appropriate value of the parameter will eventually
be obtained and will be kept for the remainder of the solution
process; this goes back several decades (e.g.,~\cite{Mayne1976}
as well as, in the context of augmented Lagrangian, ~\cite{Polak1980,
DiPilloEtal92})
and also includes more recent work such as~\cite{TWBUL-2003,Byrd2008}.

While, originally, the intent of exact penalty functions was
to turn a constrained-optimization problem into an unconstrained one,
such tool has also been used to eliminate equality constraints when only
an inequality-constraint algorithm is available, specifically, by 
replacing in each scalar equality the ``='' sign with `'$\geq$'' and
penalizing positive deviations from equality; 
see~\cite{Mayne1976,TWBUL-2003}.   More recently, in~\cite{He-Thesis,HT12},
exact penalty functions have been used for yet another purpose: allowing 
algorithms that require a feasible initial point to accept infeasible
initial points.  As pointed out above, this is the focus of 
the present work.

Use of penalty functions in the solution of linear or convex-quadratic
optimization problems has been scarcer than their use in nonlinear
optimization, for obvious reasons: powerful
methods have long existed (starting with the original simplex method
for linear optimization seven decades ago) for the solution of such 
problems and there was no perceived need to resort to such tool.
Exceptions include the use of an exact penalty method
for warmstarting interior-point methods~\cite{Benson2007}
and the ``big $M$'' approach (where the penalty parameter 
is ``large'' but fixed) considered in~\cite[section~4.3]{HungerlanderThesis}.
Also, as mentioned above, such need does arise in the context 
of constraint-reduced interior-point methods.  An exact penalty
function scheme was thus used in~\cite{He-Thesis,HT12} in the context 
of a specific constraint-reduced algorithm for 
inequality-constrained linear~\cite{HT12}, then convex 
quadratic~\cite{He-Thesis}, optimization.

In the present paper, a rather general framework is proposed,
analyzed, and numerically tested, for the solution of a CQP,
starting from a primal-infeasible point, that invokes 
an iteration of a rather arbitrary user-provided feasible-start
CQP solver, referred to below as ``base iteration".
The key contributions are 
as follows.   First the approach introduced in~\cite{He-Thesis}
is generalized to apply to a general class of feasible-start
base iterations (as opposed to, merely, 
a specific version of a constraint-reduced scheme), and to offer 
broad freedom in
the choice of a penalty-parameter updating rule; the base iteration and
the updating rule are merely required to satisfy certain general
specifications.
Second, the framework is then extended to solve
problems that include equality constraints without 
destroying any existing sparsity.
Third, it is shown that, 
at a negligible additional cost per iteration,
when the CQP is primal-infeasible, a 
certificate of infeasibility is produced.  Finally, promising 
numerical results are obtained, with the algorithm of~\cite{LaiuTits19}
as base iteration, in comparison
with those obtained with popular schemes.

The paper is organized as follows. In section~\ref{sec:framework},
the framework is outlined, and requirements to be satisfied by
the base iteration and the penalty-parameter updating rule are introduced.
Section~\ref{sect:analysis} is devoted to the convergence analysis,
under the assumption that the requirements specified in
section~\ref{sec:framework} are satisfied.  For sake of simplicity 
of exposition, sections~\ref{sec:framework}
and~\ref{sect:analysis} deal with purely inequality-constrained
problems, i.e., $p=0$.  Extension to the general problem is dealt 
with in section~\ref{sec:eq-constrained}.  In section~\ref{sec:infeasibility},
issuance of an infeasibility certificate in case \eqref{eq:primal_cqp} is infeasible
is investigated.  
Section~\ref{sec:opt_num_results} introduces a penalty-parameter update that
satisfies the required specifications, discusses implementation details, and reports numerical
results on randomly generated problems and support-vector machine training problems 
with comparison to popular optimization solvers.
Concluding remarks are given in section~\ref{sec:conclusion}.

The notation is mostly standard.  
In particular, consistent with the interior-point literature, given
a vector $\bv$, the associated matrix $\diag(v_i)$ is denoted by the 
corresponding capital letter $V$.
We use $\|\cdot\|$ to denote
an arbitrary norm, possibly different in each instance that it
is being used; of course, $\|\cdot\|_\infty$, $\|\cdot\|_1$,
and $\|\cdot\|_2$ are specific.  
The matrix norms are the respective induced norms.
The Matlab notation ($[A~B;C~D]$, $[\bu;\bv]$) is used for
block matrices and vector concatenation.

Before proceeding, we state here two assumptions on
problem~\eqref{eq:primal_cqp}, which will be in force throughout---with the exception
of section~\ref{sec:infeasibility}, as duly noted there.  Recall 
(e.g.,~\cite[Propositions 2.1--2.2]{MonteiroAdler89b}) that
if the dual of a CQP is feasible then the CQP
is bounded, and that if the CQP is feasible and bounded 
then it has an optimal solution and its dual is feasible.

\begin{assumption}
\label{as:PD}
\eqref{eq:primal_cqp} is strictly feasible and so is its dual,
and \eqref{eq:primal_cqp}'s (nonempty) optimal solution set is 
bounded.\footnote{Only Proposition~\ref{prop:constantPhi} invokes
boundedness of the primal optimal solution set. It is not clear
at this point whether such assumption is necessary indeed.  In any
case, if it is, it of course can be achieved by imposing large bounds 
to the components of $\bx$.}
\end{assumption}

\begin{assumption}
\label{as:LI}
$\bC$ has full (row) rank and $[\bH;\bA;\bC]$ has full (column) 
rank.
\end{assumption}

\section{A Framework for Accommodating Infeasible Starts}
\label{sec:framework}

\subsection{General Idea}
\label{subsec:generalidea}

Suppose a feasible-start base iteration is available toward
solving~\eqref{eq:primal_cqp} with $p=0$
and suppose moreover that
applying such iteration repeatedly on \eqref{eq:primal_cqp} 
produces a sequence of 
feasible iterates that enjoys certain additional properties (to be 
specified in section~\ref{sec:req-BA} below). 
It is suggested in~\cite{LaiuTits19}, in the context of a 
``constraint-reduced'' primal-dual interior-point method
that requires an initial primal-feasible point,
that an extension to handle problems for which a 
primal-feasible initial point is {\em not} available 
can be obtained by involving the following surrogate 
primal--dual pair,\footnote{{An $\ell_\infty$ penalty function can 
be substituted for this $\ell_1$ penalty function with minor adjustments: 
see \cite{He-Thesis,HT12} for details.}}
for which a 
primal-feasible 
point $(\bx,\bz)$ is 
readily available:

\begin{align}
\minimize_{\bx\in\bbR^n,\,\bz\in\bbR^m} \: \bff(\bx)&+\varphi\bone^T\bz
\quad \mbox{s.t.~} \:  \bA\bx +\bz \geq \bb,\: \bz\geq\bzero\:,
\label{eq:one}\\
\maximize_{\bx\in\bbR^n,\,\bspi\in\bbR^m,\bsxi\in\bbR^m}\:
\bspsi(\bx,\bspi)&
\mbox{~s.t.~} \:    \bH\bx+\bc-\bA^T\bspi=\bzero\:,
\bspi+\bsxi = \varphi\bone, (\bspi,\bsxi)\geq\bzero,
\end{align}
with
$
\bspsi(\bx,\bspi):=-\frac{1}{2}\bx^T\bH\bx+\bb^T\bspi,
$
where $\varphi>0$ is a penalty parameter.
Equivalently,
\begin{align}
&\minimize_{\bbx\in\bbR^{n+m}} \: {\bbf}_\varphi(\bbx):= \frac{1}{2} \bbx^T\bbH\bbx + \bbc_\varphi^T\bbx
\quad \mbox{s.t.~} \:  \bbA\bbx \geq \bbb\:,
\tag{P$_\varphi$}
\label{eq:primal_cqp_rho}\\
\maximize_{(\bbx,\bblambda)\in\bbR^{(n+m)+2m}}&\:
\bbpsi(\bbx,\bblambda):=-\frac{1}{2}\bbx^T\bbH\bbx+\bbb^T\bblambda
\quad\mbox{s.t.~} \: \bbH\bbx+\bbc_\varphi-\bbA^T\bblambda=\bzero,
~\bblambda\geq\bzero\:,
\tag{D$_\varphi$}
\label{eq:dual_cqp_rho}
\end{align}
where $\bbx:=[\bx;\bz]$, $\bblambda:=[\bspi;\bsxi]$, $\bbc_\varphi:=[\bc;\varphi\bone]$, $\bbb:=[\bb;\bzero]$,
\begin{equation}
\bbH := \left[\begin{array}{cc}
\bH & \bzero\\ \bzero & \bzero
\end{array}\right]\:,\quand
\bbA := \left[\begin{array}{cc}
\bA & \bI\\ \bzero & \bI
\end{array}\right]\:.
\end{equation} 

\noindent
Necessary and sufficient conditions for $(\bx,\bz,\bspi,\bsxi)$ to
solve~\eqref{eq:primal_cqp_rho}--\eqref{eq:dual_cqp_rho} are given by
\begin{equation}
\bH\bx + \bc - \bA^T\bspi = \bzero,~
\bspi+\bsxi=\varphi\bone,~
\bS\bspi=\bzero,~
\bZ\bsxi=\bzero,~
(\bs,\bz,\bspi,\bsxi)\geq\bzero.
\end{equation}
where $\bs:=\bA\bx+\bz-\bb$; 
equivalently,
\begin{equation}
\bbH\bbx + \bbc_\varphi - \bbA^T \bblambda = \bzero,~
\bbS\bblambda=\bzero,~ (\bbs,\bblambda)\geq\bzero,
\end{equation}
where $\bbs:=[\bs;\bz]$.

The rationale for introducing~\eqref{eq:primal_cqp_rho} is as follows. 
The penalty function penalizes positive values of the components
of $\bz$ while the $\bz\geq\bzero$  constraints in~\eqref{eq:one}
prevent negative values.
Hence, since the $\ell_1$ penalty function is known to be exact, 
if the solution set of~\eqref{eq:primal_cqp} is nonempty
(implying that the solution set of~\eqref{eq:primal_cqp_rho} is nonempty
for $\varphi$ sufficiently large), 
for $\varphi$ above a certain threshold, every solution 
of~\eqref{eq:primal_cqp_rho}
will be of the form $(\bx^*,\bzero)$, with $\bx^*$ 
a solution of~\eqref{eq:primal_cqp} (see \cref{lem:phi>lambda} below). 
On the other hand, for given $\varphi$, 
\eqref{eq:primal_cqp_rho}
(with feasible initial $(\bx,\bz)$) can 
be tackled by repeated application of the base iteration.
The idea is then to augment the base iteration with a
penalty-parameter updating scheme to bring $\varphi$ above such threshold.
One such scheme was proposed in~\cite{He-Thesis,HT12} (again in the 
context of a specific constraint-reduced algorithm).
Problem~\eqref{eq:primal_cqp_rho} enjoys the following properties
to be invoked in the analysis.

\renewcommand{\thecuslemma}{2.0}
\begin{cuslemma}
\label{lem:Pvarphi}
Given $\varphi>0$, \eqref{eq:primal_cqp_rho} is strictly feasible.  Further, for
$\varphi>0$ large enough, \eqref{eq:primal_cqp_rho} is bounded, i.e., has a 
nonempty solution set.  
Finally, for $\varphi>0$, given any $\rho>0$ and $\alpha\in\bbR$, the set 
$\cS:=\{(\bx,\bz)\in\cF_\alpha: \|\bz\|\leq\rho\}$ is bounded,
where
$\cF_\alpha:=\{\bbx:\bz\geq\bzero,\bA\bx+\bz\geq\bb,\bbf_\varphi(\bbx)\leq\alpha\}$.

\end{cuslemma}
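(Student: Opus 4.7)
I would attack the three claims in order. For strict feasibility of \eqref{eq:primal_cqp_rho}, it suffices to take $\bx = \bzero$ and choose $\bz$ component-wise strictly larger than $\max\{b_i, 0\}$; the resulting pair satisfies $\bz > \bzero$ and $\bA\bzero + \bz > \bb$. For boundedness when $\varphi$ is large, I would use CQP duality together with Assumption~\ref{as:PD}, which supplies a dual-feasible $(\bx^\circ, \bspi^\circ)$ of \eqref{eq:primal_cqp} with $\bH\bx^\circ + \bc - \bA^T\bspi^\circ = \bzero$ and $\bspi^\circ \geq \bzero$. For any $\varphi \geq \|\bspi^\circ\|_\infty$, setting $\bsxi^\circ := \varphi\bone - \bspi^\circ \geq \bzero$ yields a triple feasible for \eqref{eq:dual_cqp_rho}; weak duality then shows that \eqref{eq:primal_cqp_rho} is bounded below, and combined with the strict feasibility just established, the Frank--Wolfe theorem for CQPs gives a nonempty solution set.

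The main obstacle is the third claim, which I would handle by showing that the recession cone of $\cS$ is trivial. Any recession direction $(\bd_x, \bd_z)$ of $\cS$ must satisfy $\bd_z = \bzero$, because the constraint $\|\bz\| \leq \rho$ carves out a bounded set in $\bz$. The remaining conditions then reduce to $\bA\bd_x \geq \bzero$, $\bH\bd_x = \bzero$, and $\bc^T\bd_x \leq 0$ --- the latter two arising from the recession function of the convex quadratic $\bbf_\varphi$, which in direction $(\bd_x, \bzero)$ equals $\bc^T\bd_x$ when $\bH\bd_x = \bzero$ and is $+\infty$ otherwise.

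To close the argument, I would invoke the strictly dual-feasible pair $(\bx^\circ, \bspi^\circ)$ from Assumption~\ref{as:PD}, now with $\bspi^\circ > \bzero$. Writing $\bc = \bA^T\bspi^\circ - \bH\bx^\circ$ and contracting with $\bd_x$ gives $\bc^T\bd_x = \bspi^{\circ T}(\bA\bd_x) - \bx^{\circ T}(\bH\bd_x) = \bspi^{\circ T}(\bA\bd_x) \geq 0$. Combined with $\bc^T\bd_x \leq 0$, $\bA\bd_x \geq \bzero$, and $\bspi^\circ > \bzero$, this forces $\bA\bd_x = \bzero$. Together with $\bH\bd_x = \bzero$ and the full column rank of $[\bH; \bA]$ from Assumption~\ref{as:LI} (the $p=0$ case), this forces $\bd_x = \bzero$. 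Hence the recession cone is trivial and $\cS$ is bounded, without any appeal to the boundedness of the primal optimal set that the footnote reserves for later use.
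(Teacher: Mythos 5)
Your proof is correct. The first two claims are handled essentially as in the paper: strict feasibility by taking $\bz$ large, and boundedness for large $\varphi$ by lifting a dual-feasible point of \eqref{eq:dual_cqp} to one of \eqref{eq:dual_cqp_rho} via $\bsxi^\circ:=\varphi\bone-\bspi^\circ$ and appealing to the standard feasible-plus-bounded-implies-attained result for CQPs. For the third claim, both you and the paper reduce to a recession direction $\bv\neq\bzero$ with $\bH\bv=\bzero$, $\bc^T\bv\leq 0$, $\bA\bv\geq\bzero$, but you close the argument differently. The paper splits on the sign of $\bc^T\bv$: if $\bc^T\bv=0$ the direction translates the optimal set of \eqref{eq:primal_cqp}, contradicting its assumed boundedness, and if $\bc^T\bv<0$ it drives the objective to $-\infty$, contradicting nonemptiness. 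You instead substitute $\bc=\bA^T\bspi^\circ-\bH\bx^\circ$ from a \emph{strictly} dual-feasible point to get $\bc^T\bv=\bspi^{\circ T}\bA\bv\geq 0$, hence $\bA\bv=\bzero$, and then kill $\bv$ using the full column rank of $[\bH;\bA]$ from \cref{as:LI}. The trade is interesting: your route never touches the boundedness of the primal optimal solution set --- precisely the hypothesis the footnote to \cref{as:PD} flags as possibly dispensable --- but it does consume the full-rank part of \cref{as:LI} and the strictness of dual feasibility, neither of which the paper's version of this lemma needs. Since all of these are standing assumptions, both arguments are valid; yours shows that, for this lemma at least, the boundedness hypothesis can indeed be traded away.
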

\begin{proof}
First, trivially, given any $\bx$, there exists $\bz$ with large enough components such that $(\bx,\bz)$ is strictly feasible.  
Next, boundedness of \eqref{eq:primal_cqp_rho}
for $\varphi$ large enough follows from feasibility 
of \eqref{eq:primal_cqp_rho}
and feasibility of \eqref{eq:dual_cqp_rho} for $\varphi$ large enough, 
where the
latter follows from \cref{as:PD},
since the only difference between the dual of~\eqref{eq:primal_cqp}
and~\eqref{eq:dual_cqp_rho} is the constraint $\bspi+\bsxi = \varphi\bone$,
with $\bsxi\geq\bzero$, in the latter.
As for the third claim, proceeding by contradiction, suppose 
that $\cS$ is unbounded.  
Then $\cS$ must contain a nontrivial recession (translated) cone, 
i.e., (since $\bz$ is ``bounded in $\cS$'',)
there exists a ($\varphi$-dependent) direction $\bv\not=\bzero$ 
such that $\bH\bv=\bzero$, $\bc^T\bv\leq 0$, and $\bA\bv\geq\bzero$.
If $\bc^T\bv = 0$, this contradicts to boundedness of the optimal solution set of \eqref{eq:primal_cqp} (\cref{as:PD}). 
On the other hand, if $\bc^T\bv < 0$, this contradicts to the assumption that the optimal solution set of \eqref{eq:primal_cqp} is 
nonempty (again, \cref{as:PD}).
\end{proof}

\subsection{Proposed Framework}
\subsubsection{Master Algorithm}
\label{ProposedAlgorithmStructure}
Given a base iteration and a penalty-parameter updating rule, the
overall algorithm 
for solving~\eqref{eq:primal_cqp} 
starting from a potentially infeasible point 
proceeds as follows.
Here, 
\texttt{Var$_{\texttt{BI}}$} collects all internal base-iteration variables
that are not listed explicitly, and
the parenthetic ($\bblambda^{k+1}$) indicates that 
$\bblambda^{k+1}$ may or
may not be produced by the base iteration; if it is not, an 
appropriate quantity must be generated outside the base iteration,
for input into the penalty-parameter update. 

\medskip\noindent
{\bf Master Algorithm}\\ \nopagebreak
\noindent
{\bf Parameters:}  Parameters of the base iteration
and of the penalty-parameter update. \\ \nopagebreak
{\bf Initialization:}  $\bx^0\in\bbR^n$, $\bz^0\in\bbR^n$,
satisfying $\bz^0{>}\bzero$ and $\bA\bx^0+\bz^0{>}\bb$; 
$\bs^0:=\bA\bx^0+\bz^0-\bb  \:({>}\bzero)$;%
\footnote{%
When the base iteration satisfies \cref{reqB:descent}~(i) (see section~\ref{sec:req-BA}), 
the Master Algorithm can take non-strict feasible initial point, i.e., $\bz^0\geq\bzero$ and $\bA\bx^0+\bz^0\geq\bb$.
} 
$\dv^0\geq\bzero$;
$\varphi_0>0$;  $k:=0$; \texttt{Var$_{\texttt{BI}}$}
\\
{\bf Iteration $k$}: \\
\mbox{}~~~~If $\dv^k\geq\bzero$ is not available, provide a (nonnegative)
estimate thereof; see section~\ref{sec:req-BA}. \\
\mbox{}~~~~{\bf If user-provided stopping criterion is satisfied, stop.} \\
\mbox{}~~~~{\bf Penalty-parameter update}: \\
\mbox{}~~~~\mbox{}~  {\em Input:}  $\varphi_k>0$; $\bx^{k}$, $\bz^{k}$, $\dv^{k}$\\
\mbox{}~~~~\mbox{}~ {\em Output:} 
$\varphi_{k+1}\geq\varphi_k$.\footnote{A variant, for which does the 
analysis of sections 3 to 5 still applies, would be for the penalty-parameter 
update to output $\bx^{k^\prime}$ instead of 
$\bx^k$, with $k^\prime={\rm argmin}_{\ell\in\{1,\ldots,k\}}
\bff(\bx^\ell)+\varphi_{\ell}\bone^T\bz^\ell$.  Numerical tests 
showed no discernible advantage from adopting this variant though.}
\\
\mbox{}~~~~{\bf Base iteration (applied to
$(\textrm{P}_{\varphi_{k+1}})$--$(\textrm{D}_{\varphi_{k+1}})$) }\\
\mbox{}~~~~\mbox{}~  {\em Input:} $\bbx^k:=[\bx^k;\bz^k]$, $\bbs^k:=[\bs^k;\bz^k]$; \texttt{Var$_{\texttt{BI}}$}\\
\mbox{}~~~~\mbox{}~  {\em Output}: $[\bx^{k+1};\bz^{k+1}]:=\bbx^{k+1}$, 
$\bs^{k+1}:=\bA\bx^{k+1}+\bz^{k+1}-\bb$, 
$(\bblambda^{k+1}\geq\bzero)$; \texttt{Var$_{\texttt{BI}}$}
\\
\mbox{}~~~~{\bf If $\bbf_{\varphi_{k+1}}(\bbx^{k+1})>\bbf_{\varphi_{k+1}}(\bbx^{k})$, \\
\mbox{}~~~~\mbox{}~
set $\bx^k:=\bx^{k+1}$, $\bs^k:=\bs^{k+1}$, and $\bz^k:=\bz^{k+1}$;
go back to Base iteration.}\\
\mbox{}~~~~{\bf Otherwise, \\ 
\mbox{}~~~~\mbox{}~~~~set $k:=k+1$ and go to Iteration $k$}.

\begin{remark}
\label{rem:monotone}
Note that, regardless of whether or not the base iteration enforces
monotone decrease of the objective function, the sequence ``seen'' by
the penalty-parameter update does enjoy such property, i.e., upon entry 
into the penalty-parameter
update, $\bff(\bx^k)+\varphi_k\bz^k \leq \bff(\bx^{k-1})+\varphi_k\bz^{k-1}$
for all $k\geq1$.  
Such monotone decrease is key to~\cref{lem:x_z_bounded}
below, on which the convergence analysis relies.
\end{remark}

In the remainder of section~\ref{sec:framework}, we consider requirements to 
be imposed on the base iteration and on the penalty-parameter update; 
in section~\ref{sect:analysis}
we will prove that, when these requirements are satisfied, 
the penalty parameter $\varphi_k$ is eventually constant,
and the primal iteration $\bx^k$ converges to
the optimal solution set of~\eqref{eq:primal_cqp}.
\subsubsection{Requirements for the base iteration}
\label{sec:req-BA}

When the base iteration is applied repeatedly toward solving
a CQP of the form
\begin{equation}
\minimize_{x\in\bbR^n} \:
f(x):=\frac{1}{2}x^THx+c^Tx \mbox{~~s.t.~} \:
Ax \geq b,
\label{eq:primal_cqp_generic}
\end{equation}
(with any stopping criterion turned off,)
it must construct a primal sequence $\{x^\ell\}$ which, together with some 
``dual'' sequence $\{\lambda^\ell\}$
(possibly constructed by the base iteration),
with $\lambda^\ell\geq\bzero$ for all $\ell$,
satisfies the following requirements of feasibility, eventual descent
(\cref{reqB:eventualdescent} guarantees that
$\hat{K}:=\{k>0\, \colon \, f(x^k)\leq f(x^\ell),\, \forall\ell<k\}$
is an infinite index set), 
and---when the descending primal subsequence alluded to above is 
bounded---convergence to the optimal solution set.

\begin{reqbase}\label{reqB:descent}
The base iteration satisfies at least one of the following two properties:
(i)	Given $x^\ell$ primal feasible, $x^{\ell+1}$ is primal feasible. 
(ii) Given $x^\ell$ primal strictly feasible, $x^{\ell+1}$ is primal strictly feasible.
\end{reqbase}

\begin{reqbase}\label{reqB:eventualdescent}
         Given $\ell_0>0$, there exists $\ell>\ell_0$ such that
         $f(x^\ell)\leq f(x^{\ell_0})$.
\end{reqbase}

\begin{reqbase}\label{reqB:cvgce}
If $\{x^k\}_{k\in \hat{K}}$ is bounded, then 
\[
\max\{\|S^k\lambda^k\|,\,
\|Hx^k +c-A^T\lambda^k\| \} \to 0 \,\text{ on }\, \hat{K}\:.
\] 
where $S^k:=\diag(Ax^k-b)$.
\end{reqbase}

\subsubsection{Example: An Infeasible-Start CR-MPC Algorithm}
\label{sec:infeasible_CRMPC}
In~\cite{LaiuTits19}, a constraint-reduced interior-point algorithm
dubbed CR-MPC is
proposed to tackle CQPs for which a strictly primal-feasible 
initial point $\bx$ is available and no equality constraints are
present.
CR-MPC does produce an appropriate $\bblambda$ sequence; 
specifically,
$\bblambda^0$ in ``Initialization'' of the
Master Algorithm is arbitrary and, for $k=0,1,\ldots$, $\bblambda^{k+1}$ 
in the ``Output'' line of the base iteration in the Master Algorithm
is assigned the value $[\tilde\bblambda^+]_+$, where $\tilde\bblambda^+$
is as generated in Step~8 by the $k$th run of iteration CR-MPC.
Here we show that under \cref{as:PD},
iteration CR-MPC satisfies the Requirements~BI in section~\ref{sec:req-BA}.

Because iteration CR-MPC is a primal-strictly-feasible iteration with
monotone decrease of the objective function, 
\cref{reqB:descent,reqB:eventualdescent} are trivially satisfied.   
As for \cref{reqB:cvgce}, it follows from parts~(i) 
and~(iv) of Theorem~1 of~\cite{LaiuTits19}
that it is also satisfied by CR-MPC, provided
that the Assumptions~1 and~2 of~\cite{LaiuTits19} are met by \eqref{eq:primal_cqp_rho}.
Assumption~1 of~\cite{LaiuTits19} requires that \eqref{eq:primal_cqp_rho} be 
strictly feasible, be bounded, and have a bounded solution set.  
The first property is established by \cref{lem:Pvarphi} under \cref{as:PD} 
of the present paper.
The second and third ones are invoked only in Lemma~5 of~\cite{LaiuTits19} 
(see the sentence immediately preceding that lemma) in proving boundedness
of the primal sequence.  
Since, boundedness of $\{\bx^k\}$ is assumed in \cref{reqB:cvgce},
the second and third properties in Assumption~1 of~\cite{LaiuTits19} are not
necessary.
As for
Assumption~2 of~\cite{LaiuTits19} (linear independence of the gradients 
of active constraints
at stationary points), when applied to \eqref{eq:primal_cqp_rho}, it
amounts to requiring linear independence, for all $\bx\in\bbR^n$, 
of $\{\ba_i:\ba_i^T\bx\leq b_i\}$. Accordingly, in order to cover
Assumption~2 of~\cite{LaiuTits19}, we append here a third
assumption to our list; 
{\em it is in force in the present
subsection only}.

\begin{assumption}%
\footnote{While the authors of~\cite{LaiuTits19} (who
are also the authors of the present paper) were not able to do away with
such linear-independence assumption in proving that Theorem~1 of 
that paper holds, intuition and extensive numerical testing 
suggest that \cref{as:LI3} can be dropped.}
For all $\bx\in\bbR^n$, $\{\ba_i:\ba_i^T\bx\leq b_i\}$ is a linearly
independent set.
\label{as:LI3}
\end{assumption}

\noindent
(Note that iteration CR-MPC
 enforces descent of the objective function, so that the
``Otherwise'' exit of the ``If'' test in the Master Algorithm is
always selected.)

\subsubsection{Requirements for the penalty-parameter update}

The penalty-parameter update
has a dual purpose.  First, see to it
that $\varphi_k$ (rapidly) achieves a value sufficient for
\textup{(P$_{\varphi_k}$)} to have a nonempty solution set.
Second, further see to it that such value is high enough that
solutions to \textup{(P$_{\varphi_k}$)} are solutions to
the original problem.  Existence of a threshold insuring the
latter is indeed guaranteed by the ``exact'' character of the
penalty function in~\textup{(P$_{\varphi_k}$)}.  It is
desirable that $\varphi_k$ reach an adequate value rapidly
because of course, every time $\varphi_k$ is updated, the
solution process is disrupted.

In view of~\cref{lem:phi>lambda} below, 
the first three requirements below are natural.

\setcounter{reqbase}{0}
\renewcommand{\thereqbase}{PU\arabic{reqbase}}
\begin{reqbase}\label{reqP:varphi}
	$\{\varphi_k\}$ is a positive, nondecreasing scalar sequence
	that either is eventually constant or grows without bound.
\end{reqbase}

\begin{reqbase}\label{reqP:boundedz}
	 If $\{\bz^k\}$ is unbounded, then $\varphi_k\to\infty$.
\end{reqbase}

\begin{reqbase}\label{reqP:large_enough_penalty}
	If $\varphi_k$ is eventually constant and equal to $\hat\varphi$, 
	and $\max\{\|\bbS^k\dvk\|, \\ \|\bbH\bbx^k +\bbc_{\varphi_k}-\bbA^T \dvk\|,|(\bH\bx^k +\bc-\bA^T\dvik)^T\bx^k|\}\to 0$,
	then {$\hat\varphi> {\rm lim\,inf}\|\dvik\|_\infty$}.
\end{reqbase}
While the above requirements allow for $\varphi_k$ to
be increased freely, the last one, stated next, insures that, when the 
stated assumptions are satisfied, $\varphi_k$ will eventually remain
constant indeed.  This is achieved by preventing situations
where
$\varphi_k$ is increased prematurely, based only
on \cref{reqP:large_enough_penalty}, with each increase of
$\varphi_k$ possibly triggering an initial increase 
of {$\|\dvik\|_\infty$},
in turn triggering a further increase of $\varphi_k$, resulting
in a runaway phenomenon. To this effect, it is important to give
a ``chance'' to the base iteration to recover from the disruption 
caused by an increase of $\varphi_k$, so $\|\dvik\|_\infty$ can
settle to a reasonably low value; i.e., not to rush to increase
it merely because it is again less than $\|\dvik\|_\infty$.   
Accordingly (since, for constant $\varphi$, \cref{reqB:cvgce}
implies convergence to a solution of \eqref{eq:primal_cqp_rho}),
the requirement below allows $\varphi_k$ to ``track'' $\|\dvik\|_\infty$
{\em only if} the iteration does not diverge away from optimality, as
indicated by growing duality measure or growing dual infeasibility.
Indeed, as it turns out, in addition to $\varphi_k$ not being already
much larger than $\|\dvik\|_\infty$, boundedness of distance to optimality,
together with boundedness of a certain inner product with $\bx^k$,
is sufficient.

\begin{reqbase}\label{reqP:phi_increase}
	If $\{\bz^k\}$ is bounded but $\varphi_k\to\infty$, 
	then there exists an infinite index set $K$ such that the following
	quantities are bounded on~$K$:
	\begin{subequations}\label{eq:E}
		\begin{align}\label{eq:E1}
		&\bbS^{k}\dvk
		\quad ({\rm i.e.,~} \bS^k\dvik {\rm ~and~}\bZ^k\dviik);
\\
		\label{eq:E2}
		&{\bbH}\bbx^{k} +\bbc_{\varphi_k}-{\bbA}^T \dvk
		\quad ({\rm i.e.,~} \bH\bx^k +\bc-\bA^T \dvik {\rm ~and~}\dvik+\dviik-\varphi_k\bone);
\\
		\label{eq:E3}
		&(\bH\bx^k +\bc-\bA^T \dvik)^T\bx^k
\\
		\label{eq:E4}
		&\sfrac{\varphi_k}{\max\{1,\|\dvik\|\}}\:.
		\end{align}	
	\end{subequations}
\end{reqbase}

\noindent
An instance of a penalty-parameter update that satisfies a more general
version (where equality constraints are allowed) of the Requirements~PU is given in section~\ref{subsec:penalty update}.

\section{Convergence Analysis for the Framework}
\label{sect:analysis}

Like the previous section, this section focuses exclusively
on the case of problems without equality constraints, i.e., $p=0$.
The general case is dealt with in section~\ref{sec:eq-constrained}.
The analysis in this section is strongly inspired from that 
in~\cite{He-Thesis} (and indirectly that in~\cite{HT12}),
in particular Lemmas~3.2 to~3.4 of~\cite{He-Thesis}, streamlined
and generalized here by allowing for the classes of base iterations 
and penalty-parameter updating rules specified in the previous section,
rather than being tailored to a specific base iteration and 
penalty-parameter updating rule.
It invokes the dual of~\eqref{eq:primal_cqp}, which is, when $p=0$,
\begin{equation}
\label{eq:dual_cqp}
\tag{D}
\maximize_{\bx\in\bbR^{n},\bspi\in\bbR^{m}}
\bspsi(\bx,\bspi):=-\frac{1}{2}\bx^T \bH\bx + \bspi^T\bb\,
{\rm ~s.t.~~}
\bH\bx+\bc-\bA^T\bspi = \bzero,
~\bspi\geq\bzero\:.
\end{equation}

Of course, a key for
the penalty-parameter updating approach to succeed is that $\varphi$ be (eventually) large
enough.

\begin{lemma} 
\label{lem:phi>lambda}
Suppose $(\bx^*,\bz^*,\bspi^*,\bsxi^*)$
solves~\textup{\eqref{eq:primal_cqp_rho}--\eqref{eq:dual_cqp_rho}} for
some $\varphi>\|\bspi^*\|_\infty$.  Then $\bz^*=\bzero$ and
$(\bx^*,\bspi^*)$ solves~\eqref{eq:primal_cqp}--\eqref{eq:dual_cqp}.
\end{lemma}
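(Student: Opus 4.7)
The plan is to read off the KKT conditions that $(\bx^*,\bz^*,\bspi^*,\bsxi^*)$ satisfies as a joint solution of \eqref{eq:primal_cqp_rho}--\eqref{eq:dual_cqp_rho}, then use the bound $\varphi>\|\bspi^*\|_\infty$ to force $\bsxi^*$ to be strictly positive componentwise, and finally exploit complementarity with respect to $\bz^*$ to conclude $\bz^*=\bzero$. Once $\bz^*$ is eliminated, the remaining KKT relations collapse onto exactly those of \eqref{eq:primal_cqp}--\eqref{eq:dual_cqp}.

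More concretely, I would first invoke the necessary and sufficient optimality conditions already displayed in section~\ref{sec:framework} for the penalized pair, namely
\[
\bH\bx^*+\bc-\bA^T\bspi^*=\bzero,\quad \bspi^*+\bsxi^*=\varphi\bone,\quad \bS^*\bspi^*=\bzero,\quad \bZ^*\bsxi^*=\bzero,
\]
together with $(\bs^*,\bz^*,\bspi^*,\bsxi^*)\geq\bzero$, where $\bs^*:=\bA\bx^*+\bz^*-\bb$. From $\bspi^*+\bsxi^*=\varphi\bone$ we have, componentwise, $\xi_i^*=\varphi-\pi_i^*\geq\varphi-\|\bspi^*\|_\infty>0$, so $\bsxi^*>\bzero$. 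Combining with $\bZ^*\bsxi^*=\bzero$ and $\bz^*\geq\bzero$ yields $\bz^*=\bzero$.

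Substituting $\bz^*=\bzero$ back into the other relations, the residual $\bs^*=\bA\bx^*-\bb\geq\bzero$ shows primal feasibility of $\bx^*$ for \eqref{eq:primal_cqp}, while $\bH\bx^*+\bc-\bA^T\bspi^*=\bzero$ with $\bspi^*\geq\bzero$ gives dual feasibility of $(\bx^*,\bspi^*)$ for \eqref{eq:dual_cqp}, and $\bS^*\bspi^*=\bzero$ is the usual complementary slackness. These are precisely the KKT conditions for \eqref{eq:primal_cqp}--\eqref{eq:dual_cqp}, which are sufficient by convexity, so $(\bx^*,\bspi^*)$ is optimal.

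There is essentially no obstacle here; the argument is a one-line use of strict positivity of $\bsxi^*$ coming from the strict inequality $\varphi>\|\bspi^*\|_\infty$ applied to the constraint $\bspi^*+\bsxi^*=\varphi\bone$. The only thing to be mildly careful about is writing the KKT system in a form that makes the componentwise inequality $\pi_i^*\leq\|\bspi^*\|_\infty<\varphi$ immediate; everything else is a direct substitution.
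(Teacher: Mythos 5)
Your proposal is correct and follows essentially the same route as the paper's proof: use $\varphi>\|\bspi^*\|_\infty$ together with $\bspi^*+\bsxi^*=\varphi\bone$ to get $\bsxi^*>\bzero$, invoke complementarity $\bZ^*\bsxi^*=\bzero$ to conclude $\bz^*=\bzero$, and then observe that the remaining optimality conditions are exactly those of \eqref{eq:primal_cqp}--\eqref{eq:dual_cqp}. You merely spell out the final feasibility-plus-KKT step a bit more explicitly than the paper's ``feasible, thus optimal.''
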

\begin{proof}
Since $\varphi>\|\bspi^*\|_\infty$, feasibility for \eqref{eq:dual_cqp_rho} implies that  $\dvii^*=\varphi\bone-\dvi^*>\bzero$.
Complementary slackness ($\bZ^*\bsxi^*=\bzero$) then implies that $\bz^*=\bzero$.
Therefore $(\bx^*,\bspi^*)$ is feasible, thus optimal, for~\eqref{eq:primal_cqp}--\eqref{eq:dual_cqp}.
\end{proof}

\begin{proposition}
\label{prop:constantPhi}
Suppose $\varphi_k$ is eventually constant.
Let $\hat\varphi:=\lim_{k\to\infty}\varphi_k$.
Then 
(i) the optimal solution set of \textup{(P$_{\hat\varphi}$)} is nonempty and bounded,
and (ii) as $k\to\infty$, $\bz^k\to\bzero$ and
$\bx^k$ converges to the optimal solution set of~\eqref{eq:primal_cqp}.
Furthermore, possible additional convergence properties 
(beyond \cref{reqB:cvgce}) of the specific base iteration under
consideration (with a feasible initial point) are preserved when
the initial point is infeasible for \eqref{eq:primal_cqp}.

\end{proposition}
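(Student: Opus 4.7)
The plan, assuming $\varphi_k\to\hat\varphi<\infty$ as given, is to force the iterates into a bounded region, invoke Requirement~BI3 to obtain KKT-residual decay along the base iteration's descent subsequence, then use Requirement~PU3 to extract a further subsequence on which the multiplier $\bspi^k$ stays bounded strictly below $\hat\varphi\bone$, so that a standard compactness argument produces a KKT point of \textup{(P$_{\hat\varphi}$)}; Lemma~\ref{lem:phi>lambda} then transfers that KKT point to a solution of~\eqref{eq:primal_cqp}, yielding both parts of the proposition after a routine subsequence-to-full-sequence passage.

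Concretely, the contrapositive of Requirement~PU2 gives $\{\bz^k\}$ bounded (since $\varphi_k\not\to\infty$), and Remark~\ref{rem:monotone} shows that once $\varphi_k\equiv\hat\varphi$ (say for $k\ge k_0$), $\bbf_{\hat\varphi}(\bbx^k)$ is nonincreasing, so $\bbx^k\in\cF_\alpha$ for all $k\ge k_0$; Lemma~\ref{lem:Pvarphi}'s third claim then yields $\{\bx^k\}$ bounded. From $k_0$ onward the algorithm is operationally the base iteration applied to \textup{(P$_{\hat\varphi}$)} from the strictly feasible $\bbx^{k_0}$ (by Requirement~BI1 and Lemma~\ref{lem:Pvarphi}), with infinite descent subsequence $\hat K$ by BI2. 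Requirement~BI3 then gives $\|\bbS^k\bblambda^k\|\to 0$ and $\|\bbH\bbx^k+\bbc_{\hat\varphi}-\bbA^T\bblambda^k\|\to 0$ on $\hat K$; combined with boundedness of $\bx^k$ and Cauchy--Schwarz, the third residual in PU3 also vanishes, so Requirement~PU3 delivers $\hat\varphi>\liminf\|\bspi^k\|_\infty$. Along a subsequence $K'\subseteq\hat K$ on which $\|\bspi^k\|_\infty\le\hat\varphi-\varepsilon$ for some $\varepsilon>0$, $\bsxi^k=\hat\varphi\bone-\bspi^k+o(1)$ is bounded, hence $\{\bblambda^k\}_{K'}$ is bounded; any limit point $(\bbx^*,\bblambda^*)$ is then a KKT pair of \textup{(P$_{\hat\varphi}$)}--\textup{(D$_{\hat\varphi}$)}, proving nonemptiness in~(i). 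Since $\|\bspi^*\|_\infty<\hat\varphi$, Lemma~\ref{lem:phi>lambda} yields $\bz^*=\bzero$ and $\bx^*$ optimal for~\eqref{eq:primal_cqp}.

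Boundedness of the optimal set in~(i) follows from strict feasibility of \textup{(D$_{\hat\varphi}$)}, which I would obtain by convex-combining $(\bx^*,\bspi^*)$ with a strictly dual-feasible point of~\eqref{eq:dual_cqp} (Assumption~\ref{as:PD}): a small weight on the latter yields $\bzero<\bspi<\hat\varphi\bone$. Full-sequence convergence in~(ii) follows by applying the extraction above to an arbitrary subsequence of $\{\bbx^k\}$. The final claim is then immediate: from $k_0$ onward the algorithm is the base iteration on \textup{(P$_{\hat\varphi}$)} from a feasible start, the objective-increase filter collapsing once descent sets in. The main obstacle is the bookkeeping linking the outer iterate sequence tracked by the penalty update to the inner base-iteration sequence: Requirement~BI3's $\hat K$ is defined on the inner sequence whereas Requirement~PU3 is phrased in terms of outer iterates, and one must verify that these can be identified asymptotically once $\varphi$ has stabilized, threading through the Master Algorithm's re-run mechanism.
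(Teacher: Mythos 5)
Your argument tracks the paper's proof step for step through most of the statement: \cref{reqP:boundedz} gives boundedness of $\{\bz^k\}$; \cref{rem:monotone} together with the third claim of \cref{lem:Pvarphi} and \cref{reqB:descent} gives boundedness of $\{\bbx^k\}$; \cref{reqB:cvgce} and \cref{reqP:large_enough_penalty} then yield $\hat\varphi>\|\bspi^k\|_\infty$ along a subsequence; and \cref{lem:phi>lambda} transfers limit points to solutions of \eqref{eq:primal_cqp}. Your explicit extraction of a subsequence on which $\bblambda^k$ is bounded is a slightly more careful rendering of what the paper leaves implicit, and your closing caveat about reconciling the inner index set $\hat K$ of \cref{reqB:cvgce} with the outer iterates is a fair one (the paper glosses over it too; the clean fix is to note that $\bbf_{\hat\varphi}(\bbx^k)$ is monotone and converges to the optimal value along $\hat K$, hence along the whole sequence).

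The one place you genuinely depart from the paper is also where you have a gap: boundedness of the optimal set of \textup{(P$_{\hat\varphi}$)}. You assert it ``follows from strict feasibility of \textup{(D$_{\hat\varphi}$)},'' but that implication is false for CQPs in general. Take $\min x_1$ subject to $x_1\geq0$ over $(x_1,x_2)\in\bbR^2$ with $\bH=\bzero$: the dual is strictly feasible ($\lambda=1>0$), yet the primal optimal set $\{(0,x_2):x_2\in\bbR\}$ is unbounded. The obstruction is a nontrivial common null space of the Hessian and constraint matrices; to rescue your route you must additionally invoke \cref{as:LI}, which makes $[\bbH;\bbA]$ have full column rank and so kills recession directions of the optimal set --- but you never invoke it. The paper instead argues directly: once some dual solution $\bspi^*$ of \textup{(D$_{\hat\varphi}$)} satisfies $\|\bspi^*\|_\infty<\hat\varphi$, \cref{lem:phi>lambda} forces every solution of \textup{(P$_{\hat\varphi}$)} to have the form $(\bx^*,\bzero)$ with $\bx^*$ optimal for \eqref{eq:primal_cqp}, and boundedness is then exactly the last clause of \cref{as:PD} (which is the only place in the paper that clause is used --- see the footnote to that assumption). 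Either repair is short, but as written the step does not stand.
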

\begin{proof}
Since $\varphi_k$ is eventually constant, \cref{reqP:boundedz} 
implies that $\{\bz^k\}$ is bounded.
From the third claim in \cref{lem:Pvarphi} and the facts that (i) $\{\bz^k\}$
is bounded, (ii) $\{(\bx^k,\bz^k)\}$ is feasible for~(P$_{\hat\varphi}$)
(\cref{reqB:descent}), and (iii) $\bff(\bx^k)+\hat\varphi\bone^T\bz^k$ 
monotonically decreases (\cref{rem:monotone}), it follows that
$\{\bbx^k\}$ is bounded.  
\cref{reqB:cvgce} then gives that 
$\max\{\|\bbS^k\dvk\|,\|\bbH\bbx^k +\bbc_{\varphi_k}-\bbA^T\dvk\|\}\to 0$,
which implies that $\bbx^k$ converges to the optimal solution set 
of~(P$_{\hat\varphi}$), and hence that (P$_{\hat\varphi}$) is bounded. 
Next, from boundedness of $\{\bbx^k\}$, we have (again invoking 
\cref{reqB:cvgce})
$|(\bH\bx^{k} +\bc-\bA^T\dvik)^T\bx^{k}|\to 0$.
\cref{reqP:large_enough_penalty} then leads to 
$\hat\varphi>\|\bspi^k\|_\infty$ for $k$ large enough.
It follows from \cref{lem:phi>lambda} and \cref{as:PD} that
the optimal solution set of~(P$_{\hat\varphi}$) is bounded.
Finally, from \cref{lem:phi>lambda}, $\bz^*=\bzero$ and 
$(\bx^k,\bspi^k)$ converges to the set of
primal--dual solutions to~\eqref{eq:primal_cqp}--\eqref{eq:dual_cqp}.
Also, because the key properties of~\eqref{eq:primal_cqp} 
(as listed in~\cref{as:PD}) are shared by~(P$_{\hat\varphi}$), 
all specific additional convergence properties of the base 
iteration are preserved.
\end{proof}

The next lemma
gives an upper bound on the magnitude of the relaxation variable~$\bz$
when $(\bx,\bz)$ is feasible for~\eqref{eq:primal_cqp_rho} and
$\varphi$ is large enough.
This upper bound is then used to prove boundedness of $\{\bz^k\}$ 
in \cref{lem:x_z_bounded}.
For use in the proofs here and in section~\ref{sec:eq-constrained}, recall 
that, because $\bH\succeq\bzero$,
\begin{equation}
\label{eq:positivesquare}
\hat\bx^T\bH\hat\bx+\bx^T\bH\bx-2(\hat\bx^T\bH\bx)
=(\hat\bx-\bx)^T\bH(\hat\bx-\bx)\geq0\:.
\end{equation}

\begin{lemma}
	\label{lem:technical}
	Let $(\hat\bx,\hat\bspi)$
	be feasible for~\eqref{eq:dual_cqp}
	and $(\bx,\bz)$ be feasible for~\eqref{eq:primal_cqp_rho}, and 
	let	$\varphi>\|\hat\bspi\|_\infty$. 
	Then
	\begin{equation}
	\label{eq:||z||<etc}
	\|\bz\|_\infty \leq 
	\frac{ \bff(\bx) + \varphi\bone^T\bz -\bspsi(\hat\bx,\hat\bspi) }
	{\varphi - \|\hat\bspi\|_\infty}\: .
	\end{equation}
\end{lemma}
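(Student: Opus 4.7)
The plan is to run a weak-duality-style computation in which I use dual feasibility to substitute for $\bc$, exploit positive semi-definiteness of $\bH$, and then absorb the resulting inner product against the primal slack using $\hat{\bspi}\geq\bzero$ and the bound $\hat{\bspi}^T\bz\leq\|\hat{\bspi}\|_\infty\bone^T\bz$ valid for $\bz\geq\bzero$.

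More concretely, I would first examine the quantity $\bff(\bx)-\bspsi(\hat{\bx},\hat{\bspi})$ and use dual feasibility $\bc=\bA^T\hat{\bspi}-\bH\hat{\bx}$ to eliminate $\bc$. After collecting terms, this should yield
\begin{equation*}
\bff(\bx)-\bspsi(\hat{\bx},\hat{\bspi})=\tfrac{1}{2}(\bx-\hat{\bx})^T\bH(\bx-\hat{\bx})+\hat{\bspi}^T(\bA\bx-\bb),
\end{equation*}
the first term being nonnegative by \eqref{eq:positivesquare}.

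Next I would use primal feasibility $\bA\bx\geq\bb-\bz$ together with $\hat{\bspi}\geq\bzero$ to get $\hat{\bspi}^T(\bA\bx-\bb)\geq -\hat{\bspi}^T\bz$, and then $\hat{\bspi}^T\bz\leq\|\hat{\bspi}\|_\infty\bone^T\bz$ (valid because $\bz\geq\bzero$). Substituting back gives $\bff(\bx)-\bspsi(\hat{\bx},\hat{\bspi})\geq -\|\hat{\bspi}\|_\infty\bone^T\bz$. Adding $\varphi\bone^T\bz$ to both sides and dividing by $\varphi-\|\hat{\bspi}\|_\infty>0$ yields
\begin{equation*}
\bone^T\bz\leq\frac{\bff(\bx)+\varphi\bone^T\bz-\bspsi(\hat{\bx},\hat{\bspi})}{\varphi-\|\hat{\bspi}\|_\infty}.
\end{equation*}
Finally, since $\bz\geq\bzero$, we have $\|\bz\|_\infty\leq\bone^T\bz$, which delivers \eqref{eq:||z||<etc}.

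No step here is genuinely hard; the only place requiring attention is verifying that after substituting $\bc=\bA^T\hat{\bspi}-\bH\hat{\bx}$ the cross term $\hat{\bx}^T\bH\bx$ combines correctly with $\tfrac{1}{2}\bx^T\bH\bx$ and $\tfrac{1}{2}\hat{\bx}^T\bH\hat{\bx}$ to form the PSD quadratic in $\bx-\hat{\bx}$ via \eqref{eq:positivesquare}. Once that bookkeeping is in place, the rest is a one-line application of primal/dual sign constraints and a division by the positive quantity $\varphi-\|\hat{\bspi}\|_\infty$.
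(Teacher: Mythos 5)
Your proposal is correct and follows essentially the same weak-duality computation as the paper: substitute for $\bc$ via dual feasibility, absorb the cross term with \eqref{eq:positivesquare}, and use $\hat\bspi\geq\bzero$ with primal feasibility to arrive at $-\hat\bspi^T\bz\leq\bff(\bx)-\bspsi(\hat\bx,\hat\bspi)$. The only (immaterial) difference is the final step: the paper bounds each $z_i$ componentwise via $\hat\xi_i z_i\leq\hat\bsxi^T\bz$ with $\hat\bsxi:=\varphi\bone-\hat\bspi$, whereas you bound $\bone^T\bz$ and invoke $\|\bz\|_\infty\leq\bone^T\bz$ for nonnegative $\bz$ --- both yield \eqref{eq:||z||<etc}.
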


\begin{proof}
	Feasibility of $(\bx,\bz)$ for~\eqref{eq:primal_cqp_rho} implies
	that $\bA\bx+\bz\geq\bb$ so that,
	since $\hat\bspi\geq\bzero$ (feasible for~\eqref{eq:dual_cqp}),
	\begin{equation}
	\hat\bspi^T \bA \bx + \hat\bspi^T\bz \geq \bb^T\hat\bspi.
	\end{equation}
	Since feasibility of $(\hat\bx,\hat\bspi)$ for~\eqref{eq:dual_cqp}
	implies $\bH\hat\bx + \bc = \bA^T\hat\bspi$, it follows that
	\begin{equation}
	\label{eq:2.6}
	-\hat\bspi^T\bz \leq (\bH\hat\bx+\bc)^T\bx - \bb^T\hat\bspi
	\leq \bff(\bx) -\bspsi(\hat\bx,\hat\bspi)\:,
	\end{equation}
	where we have used~\eqref{eq:positivesquare}.
	Since $\varphi>\|\hat\bspi\|_\infty$, $\hat{\bsxi}:=\varphi\bone - \hat\bspi>\bzero$.
	Adding $\varphi\bone^T\bz$ to both sides of~\eqref{eq:2.6} then yields
	\begin{equation}
	\hat\bsxi^T\bz 
	\leq \bff(\bx)
	+ \varphi\bone^T\bz -\bspsi(\hat\bx,\hat\bspi).
	\end{equation}
	Then, since $\bz\geq\bzero$ (feasible for~\eqref{eq:primal_cqp_rho}),
	\begin{equation}
	\hat\xi_i z_i 
	\leq \hat\bsxi^T\bz
	\leq \bff(\bx) 
	+ \varphi\bone^T\bz -\bspsi(\hat\bx,\hat\bspi),
	\end{equation}
	yielding, for $i=1,\ldots,m$,
	\begin{equation}
	z_i\leq 
	\frac{\bff(\bx) + \varphi\bone^T\bz -\bspsi(\hat\bx,\hat\bspi)}
	{\hat\xi_i}
	\leq \frac{\bff(\bx) + \varphi\bone^T\bz-\bspsi(\hat\bx,\hat\bspi)}
	{\varphi-\|\hat\bspi\|_\infty}.
	\end{equation}
	Since $\bz\geq\bzero$, the claim follows.
\end{proof}

\begin{lemma}
\label{lem:x_z_bounded}
Sequence $\{\bz^k\}$ is bounded.
\end{lemma}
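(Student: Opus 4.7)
The plan is to argue by contradiction. Suppose that $\{\bz^k\}$ is unbounded. Then by \cref{reqP:boundedz}, $\varphi_k\to\infty$. The strategy is to combine the \emph{a priori} inequality of \cref{lem:technical} with the monotone-decrease property of \cref{rem:monotone} to bound $\|\bz^k\|_\infty$ uniformly in $k$, contradicting the assumption.

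First, by \cref{as:PD}, \eqref{eq:dual_cqp} admits a feasible point $(\hat\bx,\hat\bspi)$. For all $k$ sufficiently large, $\varphi_k>\|\hat\bspi\|_\infty$, so \cref{lem:technical} yields
\[
(\varphi_k-\|\hat\bspi\|_\infty)\|\bz^k\|_\infty \;\leq\; L_k+C,
\qquad L_k:=\bff(\bx^k)+\varphi_k\bone^T\bz^k,\quad C:=-\bspsi(\hat\bx,\hat\bspi).
\]
By \cref{rem:monotone}, $L_k\leq L_{k-1}+(\varphi_k-\varphi_{k-1})\bone^T\bz^{k-1}$, and telescoping gives
\[
L_k \;\leq\; L_0+\sum_{j=1}^{k}(\varphi_j-\varphi_{j-1})\,\bone^T\bz^{j-1}.
\]
Bounding each term on the right by \cref{lem:technical} at iterate $j-1$, namely $\bone^T\bz^{j-1}\leq m(L_{j-1}+C)/(\varphi_{j-1}-\|\hat\bspi\|_\infty)$, produces a Gr\"onwall-type recursion on $L_k+C$; dividing the resulting upper bound by $\varphi_k-\|\hat\bspi\|_\infty$ should then give a $k$-independent bound on $\|\bz^k\|_\infty$, yielding the contradiction.

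The hard part is closing the Gr\"onwall step cleanly. The crude estimate
\[
L_k+C \;\leq\; (L_0+C)\prod_{j=1}^k\Bigl(1+\tfrac{m(\varphi_j-\varphi_{j-1})}{\varphi_{j-1}-\|\hat\bspi\|_\infty}\Bigr),
\]
combined with $\log(1+x)\leq x$ and a Riemann-sum comparison with $\int d\varphi/(\varphi-\|\hat\bspi\|_\infty)$, delivers only $L_k+C=O(\varphi_k^{cm})$, hence $\|\bz^k\|_\infty=O(\varphi_k^{cm-1})$; this suffices only when $m\leq 1$. For general $m$, I would refine the bookkeeping by restricting the telescoped sum to the (infinite, since $\varphi_k\to\infty$) subsequence on which $\varphi$ strictly increases, noting that on the remaining iterations \cref{rem:monotone} gives the pure descent $L_k\leq L_{k-1}$ so no growth occurs there, and then applying Abel summation to trade the multiplicative factor $m$ for a series that converges as the ratio $(\varphi_j-\varphi_{j-1})/\varphi_{j-1}$ shrinks. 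Either route culminates in a $k$-uniform bound on $\|\bz^k\|_\infty$, contradicting unboundedness.
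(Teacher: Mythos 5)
Your setup is the right one---contradiction via \cref{reqP:boundedz}, then \cref{lem:technical} combined with the monotone decrease of \cref{rem:monotone}---but the Gr\"onwall step does not close, and the gap is real rather than a bookkeeping nuisance. The factor $m$ you introduce by estimating $\bone^T\bz^{j-1}\leq m\|\bz^{j-1}\|_\infty$ enters every multiplicative factor of the product, and since the requirements (and the concrete rule of section~\ref{subsec:penalty update}, which multiplies $\varphi$ by $\sigma_2>1$ at each increase) allow $\varphi_k$ to grow geometrically, the ratios $(\varphi_j-\varphi_{j-1})/(\varphi_{j-1}-\|\hat\bspi\|_\infty)$ need not shrink; their sum diverges like $\log\varphi_k$ and the product grows like a power of $\varphi_k$ that exceeds one whenever $m>1$, so dividing by $\varphi_k-\|\hat\bspi\|_\infty$ removes only one power. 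Neither proposed repair helps: restricting to the increase subsequence changes nothing (the product is effectively taken over those indices anyway), and Abel summation would require control of the increments $\bone^T\bz^{j}-\bone^T\bz^{j-1}$, which you do not have.

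The missing idea is that no unrolling is needed, because the bound supplied by \cref{lem:technical} is already monotone in $k$. Set $g_k(\varphi):=\big(\bff(\bx^k)+\varphi\bone^T\bz^k-\bspsi(\hat\bx,\hat\bspi)\big)/(\varphi-\|\hat\bspi\|_\infty)$ and $\nu_k:=g_{k-1}(\varphi_k)$, so that $\|\bz^{k-1}\|_\infty\leq\nu_k$ for $k\geq k_1$. Then $\nu_{k+1}=g_k(\varphi_{k+1})\leq g_k(\varphi_k)\leq g_{k-1}(\varphi_k)=\nu_k$: the second inequality is exactly \cref{rem:monotone} at the fixed value $\varphi=\varphi_k$, and the first holds because $\varphi_{k+1}\geq\varphi_k$ (\cref{reqP:varphi}) and $g_k$ is nonincreasing on $(\|\hat\bspi\|_\infty,\infty)$---its derivative has numerator $-\big(\bff(\bx^k)+\|\hat\bspi\|_\infty\bone^T\bz^k-\bspsi(\hat\bx,\hat\bspi)\big)$, which is nonpositive by a weak-duality computation using $(\hat\bx-\bx^k)^T\bH(\hat\bx-\bx^k)\geq0$, $\hat\bspi\geq\bzero$, and $\bA\bx^k+\bz^k\geq\bb$. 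Hence $\nu_k\leq\nu_{k_1}$ for all $k\geq k_1$ and $\{\bz^k\}$ is bounded. This is the route the paper takes: your two ingredients are the right ones, but they must be combined through monotonicity of the ratio in $\varphi$ rather than by telescoping the numerator.
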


\begin{proof}
Proceeding by contradiction, suppose $\{\bz^k\}$
is unbounded, so that, from \cref{reqP:boundedz},
$\varphi_k\to\infty$ as $k\to\infty$.
Then, 
given any~\eqref{eq:dual_cqp}--feasible $(\hat\bx,\hat\bspi)$, 
there exists $k_1$ such that $\varphi_k>\|\hat\bspi\|_\infty$
for all $k\geq k_1$ and in view of \cref{lem:technical},
\begin{equation}
\|\bz^{k-1}\|_\infty \leq \nu_k:=
\frac{\bff(\bx^{k-1}) + \varphi_k\bone^T\bz^{k-1}-\bspsi(\hat\bx,\hat\bspi)}
{\varphi_k - \|\hat\bspi\|_\infty}, \quad k\geq k_1.
\end{equation}
To conclude, we show that $\{\nu_k\}$ is bounded,
specifically, that $\nu_{k+1}\leq\nu_k$ for all $k\geq k_1$, contradicting
unboundedness of $\{\bz^k\}$.
Since (see \cref{rem:monotone}) 
$\bff(\bx^k)+\varphi_k\bone^T\bz^k \leq \bff(\bx^{k-1})+\varphi_k\bone^T\bz^{k-1}$
for all $k\geq1$,
it suffices to show that, for all $k\geq k_1$,
\begin{equation}
(\nu_{k+1}=)~\frac{\bff(\bx^{k}) + \varphi_{k+1}\bone^T\bz^{k}-\bspsi(\hat\bx,\hat\bspi)}
{\varphi_{k+1} - \|\hat\bspi\|_\infty}
\leq \frac{\bff(\bx^{k}) + \varphi_k\bone^T\bz^{k}-\bspsi(\hat\bx,\hat\bspi)}
{\varphi_k - \|\hat\bspi\|_\infty}.
\end{equation}
To that effect, we show that, for all $k$, the function 
$g_k:\bbR\to\bbR$ defined by 
\begin{equation}
\label{eq:gk}
g_k(\varphi):= \frac{\bff(\bx^k) + \varphi\bone^T\bz^k-\bspsi(\hat\bx,\hat\bspi)}
{\varphi - \|\hat\bspi\|_\infty}
\end{equation}
has a nonpositive derivative when $\varphi > \|\hat\bspi\|_\infty$.
Indeed,
\begin{equation}
g_k^\prime(\varphi) = 
-\frac{\bff(\bx^k)+\|\hat\bspi\|_\infty\bone^T\bz^k-\bspsi(\hat\bx,\hat\bspi)}
{(\varphi-\|\hat\bspi\|_\infty)^2}
\end{equation}
and
\begin{equation}
\begin{alignedat}{2}
\bff(\bx^k)+\|\hat\bspi\|_\infty\bone^T\bz^k-\bspsi(\hat\bx,\hat\bspi)
&=\frac{1}{2} \hat\bx^T\bH\hat\bx - \bb^T\hat\bspi 
+ \bff(\bx^k)+\|\hat\bspi\|_\infty\bone^T\bz^k\\
&\geq -\hat\bspi^T\bz^k + \|\hat\bspi\|_\infty\bone^T\bz^k\geq0\:,
\end{alignedat}
\end{equation}
where we have used the facts that, 
given any \eqref{eq:primal_cqp_rho}--feasible $(\bx,\bz)$
(and since $(\hat\bx,\hat\bspi)$ is \eqref{eq:dual_cqp}--feasible),
recalling~\eqref{eq:positivesquare},
\begin{equation}
\label{eq:ineq1}
\bff(\bx) + \frac{1}{2} \hat\bx^T\bH\hat\bx
\geq \bff(\bx) -\frac{1}{2}\bx^T\bH\bx + \hat\bx^T\bH\bx
= (\bc+\bH\hat\bx)^T\bx = \hat\bspi^T \bA \bx\:,
\end{equation}
and that, since $\hat\bspi\geq\bzero$ and $\bA\bx+\bz\geq\bb$,
\begin{equation}
\label{eq:ineq2}
\hat\bspi^T \bA \bx - \hat\bspi^T \bb 
= -\hat\bspi^T(\bb-\bA\bx)
\geq -\hat\bspi^T \bz\:.
\end{equation}
Since $\varphi_{k+1}\geq\varphi_k$ (\cref{reqP:varphi}), 
the proof is complete.
\end{proof}

It remains to show that, under \cref{reqP:phi_increase}, 
$\varphi_k$ is eventually constant, so \cref{prop:constantPhi}
applies. This is done in the next two lemmas and 
\cref{thm:varphi_bounded}.

\begin{lemma} 
\label{lem:z->0onK}
Suppose $\varphi_k\to\infty$ as $k\to\infty$.   Then
there exists an infinite index set $K$ that satisfies the
properties listed in \cref{reqP:phi_increase}.
Further, given any such $K$,
(i) $\bz^k\to\bzero$ on $K$ and 
(ii) $\{\bx^k\}$ is bounded on $K$.
\end{lemma}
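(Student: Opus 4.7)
The plan has four parts.

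\textbf{(a) Existence of $K$.} This is immediate: by \cref{lem:x_z_bounded} the sequence $\{\bz^k\}$ is bounded, and combined with the standing hypothesis $\varphi_k\to\infty$, this is exactly the premise of \cref{reqP:phi_increase}, whose conclusion furnishes the desired infinite index set.

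\textbf{(b) Two preparatory observations on $K$.} First, $\|\dvik\|=\Theta(\varphi_k)$: the lower bound $\|\dvik\|\ge\varphi_k/C$ comes from \eqref{eq:E4}, while \eqref{eq:E2} combined with the nonnegativity of $\dviik$ (enforced by the master algorithm's use of $[\cdot]_+$) forces $\dvik\le\varphi_k\bone+O(1)$ componentwise, giving the matching upper bound. Second, expanding $(\bH\bx^k+\bc-\bA^T\dvik)^T\bx^k=O(1)$ via $\bA\bx^k=\bb-\bz^k+\bs^k$, and substituting the \eqref{eq:E1} bounds $\bS^k\dvik=O(1)$, $\bZ^k\dviik=O(1)$ together with $\dvik+\dviik=\varphi_k\bone+O(1)$ from \eqref{eq:E2} and boundedness of $\bz^k$, yields the approximate strong-duality identity
\[
\bff(\bx^k)+\varphi_k\bone^T\bz^k=\bspsi(\bx^k,\dvik)+O(1)=-\tfrac12\bx^k{}^T\bH\bx^k+\bb^T\dvik+O(1)\quad\text{on }K.
\]

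\textbf{(c) Boundedness of $\{\bx^k\}$ on $K$ (claim (ii)).} By \cref{as:PD} there is $(\hat\bx,\hat\bspi)$ feasible for~\eqref{eq:dual_cqp}; for $\varphi_k>\|\hat\bspi\|_\infty$ the triple $(\hat\bx,\hat\bspi,\varphi_k\bone-\hat\bspi)$ is feasible for~\eqref{eq:dual_cqp_rho}, so weak duality gives $\bff(\bx^k)+\varphi_k\bone^T\bz^k\ge\bspsi(\hat\bx,\hat\bspi)$. Combined with the identity, together with $\bb^T\dvik=O(\varphi_k)$ from part~(b), this yields $\tfrac12\bx^k{}^T\bH\bx^k\le\bb^T\dvik+O(1)=O(\varphi_k)$. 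I would then argue by contradiction: assuming $\|\bx^k\|\to\infty$ on a sub-subsequence, normalize $\hat\bx^k:=\bx^k/\|\bx^k\|\to\hat\bx^*$ with $\|\hat\bx^*\|=1$. Primal feasibility $\bA\bx^k\ge\bb-\bz^k$ divided by $\|\bx^k\|$ gives $\bA\hat\bx^*\ge\bzero$; the bound $\bx^k{}^T\bH\bx^k=O(\varphi_k)$ combined with a case analysis on the relative growth of $\|\bx^k\|$ and $\varphi_k$ forces $\bH\hat\bx^*=\bzero$; and the complementary-slackness bound \eqref{eq:E1} combined with the $\Theta(\varphi_k)$ scaling of $\dvik$ imposes support conditions on any limit direction of $\dvik/\|\dvik\|$ that, together with \cref{as:LI} (full column rank of $[\bH;\bA]$), exclude nontrivial recession directions, yielding $\hat\bx^*=\bzero$ and the contradiction.

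\textbf{(d) $\bz^k\to\bzero$ on $K$ (claim (i)).} Once $\{\bx^k\}$ is bounded on $K$, the $-\tfrac12\bx^k{}^T\bH\bx^k$ term in the identity is $O(1)$, so $\varphi_k\bone^T\bz^k=\bb^T\dvik+O(1)$; dividing by $\varphi_k$ gives $\bone^T\bz^k=\bb^T\hat\bspi^k+o(1)$ with $\hat\bspi^k:=\dvik/\varphi_k$. On any convergent sub-subsequence, $\hat\bspi^k\to\hat\bspi^*$ with $\hat\bspi^*\ge\bzero$ and $\|\hat\bspi^*\|>0$ (by \eqref{eq:E4}); with $\bH\bx^k$ now bounded, \eqref{eq:E2} divided by $\varphi_k$ gives $\bA^T\hat\bspi^*=\bzero$. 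Strict primal feasibility (\cref{as:PD}) then implies $\bb^T\hat\bspi^*\le 0$, so the nonnegative limit $\lim\bone^T\bz^k=\bb^T\hat\bspi^*$ must equal zero, whence $\bz^k\to\bzero$ on $K$ since $\bz^k\ge\bzero$.

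The main obstacle is part~(c): the quadratic bound $\bx^k{}^T\bH\bx^k=O(\varphi_k)$ controls only the $\bH$-component of $\bx^k$, and converting it into an actual bound on $\|\bx^k\|$ requires carefully combining the complementary-slackness information from \eqref{eq:E1} with the scaling from \eqref{eq:E4} in order to invoke \cref{as:LI} effectively and rule out all possible asymptotic directions.
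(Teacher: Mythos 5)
Your parts (a), (b), and (d) are essentially sound, and your overall route --- an approximate strong-duality identity $\bff(\bx^k)+\varphi_k\bone^T\bz^k=-\tfrac12(\bx^k)^T\bH\bx^k+\bb^T\dvik+O(1)$ followed by a normalized recession-direction analysis --- is genuinely different from the paper's. But part (c), boundedness of $\{\bx^k\}$ on $K$, on which your part (d) depends, has a real gap that you only half-acknowledge. Two steps fail as sketched. First, $(\hat\bx^k)^T\bH\hat\bx^k=(\bx^k)^T\bH\bx^k/\|\bx^k\|^2=O(\varphi_k/\|\bx^k\|^2)$ yields $\bH\hat\bx^*=\bzero$ only in the regime $\|\bx^k\|^2/\varphi_k\to\infty$; in the complementary case ($\|\bx^k\|\to\infty$ but $\|\bx^k\|^2=O(\varphi_k)$) you get no control on $\bH\hat\bx^*$, and the announced ``case analysis'' is not supplied. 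Second, and more fundamentally, even granting $\bH\hat\bx^*=\bzero$ and $\bA\hat\bx^*\geq\bzero$, \cref{as:LI} (full column rank of $[\bH;\bA]$) forces $\hat\bx^*=\bzero$ only if you also establish $\bA\hat\bx^*=\bzero$. The support conditions you extract from \eqref{eq:E1} --- that limit points $\overline\bspi^*$ of $\dvik/\varphi_k$ vanish on components $i$ with $\ba_i^T\hat\bx^*>0$ --- are perfectly consistent with $\ba_i^T\hat\bx^*>0$ for some $i$, so they do not exclude a nontrivial recession direction; nothing in \cref{as:PD,as:LI} rules out an unbounded feasible set.

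The paper closes exactly this hole by a different device, worth comparing with your plan: it takes a \emph{strictly} dual-feasible $(\hat\bx,\hat\bspi)$ with $\hat\bspi>\bzero$ (available by \cref{as:PD}) and, combining \eqref{eq:E1}--\eqref{eq:E3} with the identity $\bA(\hat\bx-\bx^k)-\bz^k-(\hat\bs-\bs^k)=\bzero$, derives the single bound $(\hat\bx-\bx^k)^T\bH(\hat\bx-\bx^k)+\hat\bspi^T\bs^k+(\varphi_k\bone-\hat\bspi)^T\bz^k\leq C$ on $K$. All three terms are (eventually) nonnegative, hence individually bounded; strict positivity of $\hat\bspi$ then bounds $\bs^k$ and hence $\bA\bx^k$, the first term bounds $\bH\bx^k$, and only then is the full column rank of $[\bH;\bA]$ invoked, with \emph{both} $\bA\bx^k$ and $\bH\bx^k$ in hand. (The third term also delivers claim (i) directly, without first needing claim (ii), reversing your order of deduction.) If you wish to keep your strong-duality framework, you must inject the strict positivity of a dual-feasible multiplier in some analogous way; as written, part (c) does not close.
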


\begin{proof}
Since $\varphi_k\to\infty$ as $k\to\infty$, 
boundedness of $\{\bz^k\}$ (\cref{lem:x_z_bounded})
and \cref{reqP:phi_increase} guarantee
existence of an 
infinite index set $K$ such that 
\eqref{eq:E1}--\eqref{eq:E3}
are bounded on $K$.
Let $(\hat\bx,\hat\bspi)$ be 
\eqref{eq:primal_cqp}--\eqref{eq:dual_cqp}--feasible so that (i)
$\hat\bs:=\bA\hat\bx-\bb~\geq\bzero$ and since
$\bA\bx^k+\bz^k-\bb=\bs^k$ for all $k$,
\begin{equation}
\label{eq:P-D}
\bA(\hat\bx-\bx^k)-\bz^k -(\hat\bs-\bs^k)=\bzero \quad\forall k,
\end{equation}
and (ii)
$\bA^T\hat\bspi=\bH\hat\bx+\bc$
and 
$\hat\bspi\geq\bzero$. 
Next,
\eqref{eq:E2}--\eqref{eq:E3} imply that
$(\hat\bx-\bx^k)^T (\bH\bx^k+\bc-\bA^T \dvik)$ is bounded for 
$k\in K$,
and adding $(\hat\bx-\bx^k)^T(\bA^T\hat\bspi-H\hat\bx-\bc)=0$ to it yields that
\begin{equation}
\label{eq:P-D2}
(\hat\bx-\bx^k)^T\bA^T(\hat\bspi- \dvik) - 
(\hat\bx-\bx^k)^T \bH (\hat\bx-\bx^k)
~{\rm is~bounded~for~}k\in K.
\end{equation}
Now we first show that, for some $C$,
\begin{equation}
\label{eq:XX}
(\hat\bx-\bx^k)^T \bH (\hat\bx-\bx^k) + \hat\bspi^T\bs^k
+ (\varphi_k\bone-\hat\bspi)^T\bz^k
\leq C \quad\forall k\in K \:.
\end{equation}
From~\eqref{eq:P-D}--\eqref{eq:P-D2}, we have, for some $\{\beta_k\}$
bounded on $K$,
\begin{equation}\label{eq:XXX}
(\hat\bx-\bx^k)^T \bH (\hat\bx-\bx^k)
= (\hat\bx-\bx^k)^T \bA^T (\hat\bspi- \dvik)+\beta_k
= (\hat\bs-\bs^k + {\bz^k})^T (\hat\bspi- \dvik)+\beta_k \:.
\end{equation}
Reorganizing and adding $\varphi_k(\bz^k)^T\bone$ to both sides
yields, for $k\in K$,
\begin{equation}
\begin{alignedat}{2}
(\hat\bx-\bx^k)^T &\bH (\hat\bx-\bx^k) + (\bs^k)^T\hat\bspi
+ (\bz^k)^T(\varphi_k\bone-\hat\bspi)\\
&= \hat\bs^T\hat\bspi - (\hat\bs - \bs^k +\bz^k)^T \dvik 
+ \varphi_k(\bz^k)^T\bone +\beta_k\\
&= \hat\bs^T\hat\bspi - \hat\bs^T \dvik
+ (\bs^k)^T \dvik + (\bz^k)^T (\varphi_k\bone- \dvik)
+\beta_k.
\end{alignedat}
\end{equation}
{Here the second term is nonpositive, and \cref{reqP:phi_increase} \eqref{eq:E1}--\eqref{eq:E2} implies that the third and fourth terms are bounded on $K$. Thus, the boundedness of $\{\beta_k\}$ on $K$ yields \eqref{eq:XX}.}
Next, note that each of the three terms on the left-hand side
of~\eqref{eq:XX} is bounded from below, so that 
all three are bounded on $K$.
Indeed, the first and second terms are nonnegative since 
$\bH\succeq\bzero$,	$\hat\bspi\geq\bzero$, and $\bs^k\geq\bzero$;
and the third term is nonnegative for $k$ large	enough since 
$\bz^k\geq\bzero$ and $\varphi_k\to\infty$.
Since $\varphi_k\to\infty$, claim (i) follows from boundedness
of the third term in the left-hand side of~\eqref{eq:XX}.

With~\eqref{eq:XX} in hand, 
invoking strict dual feasibility (\cref{as:PD}),
assume without loss of
generality that $\hat\bspi$ has strictly positive components.  
Then boundedness on $K$
of the second term in~\eqref{eq:XX} 
implies boundedness of $\{\bs^k\}$ on $K$.
From boundedness on $K$ of $\{\bz^k\}$ and $\{\bs^k\}$ 
and the definition
of $\bs^k$, it follows that $\{\bA\bx^k\}$ is bounded on $K$.
Also, since $\bH=\bH^T\succeq\bzero$,
boundedness on $K$ of the first term in~\eqref{eq:XX}
implies boundedness of 
$\bH\bx^k$, again on $K$.  
Finally, boundedness on
$K$ of $\{\bA\bx^k\}$ and $\{\bH\bx^k\}$ together with the full-rank 
assumption on $[\bH;\bA]$ (\cref{as:LI}) proves claim (ii).
\end{proof}

\begin{lemma} 
\label{lem:bddDag}
Suppose $\varphi_k\to\infty$ as $k\to\infty$ 
and let $K$ be as 
in \cref{lem:z->0onK}, so that $\bz^k\to\bz^*=\bzero$ on $K$, $\{\bx^k\}$ 
is bounded on $K$, and $K$ has the properties guaranteed
by \cref{reqP:phi_increase}.
Then, given any limit point $\bx^*$ of $\{\bx^{k}\}$ on $K$, there
exists a nonzero $\overline\bspi^*\geq\bzero$, 
such that
\begin{equation}
\bA^T\overline\bspi^*=\bzero, 
\quad \bS^*\overline\bspi^*=\bzero, 
\end{equation}
where $\bs^*:=\bA\bx^*+\bz^*-\bb =\bA\bx^*-\bb$.
\end{lemma}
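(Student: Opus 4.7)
The plan is to extract the claimed $\overline\bspi^*$ as a limit of suitably normalized dual iterates $\bspi^k$. The four bounded quantities in \cref{reqP:phi_increase} furnish exactly the three ingredients we need: (a) that $\|\bspi^k\|$ grows without bound on $K$, so that normalization is meaningful; (b) that the normalized dual iterates converge (along a subsequence) to a vector in the null space of $\bA^T$; and (c) that complementary slackness is preserved in the limit.

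First I would use \eqref{eq:E4}. Since the ratio $\varphi_k/\max\{1,\|\bspi^k\|\}$ is bounded on $K$ and $\varphi_k\to\infty$ on $K$, we must have $\|\bspi^k\|\to\infty$ on $K$. Hence for $k\in K$ large enough we may define $\overline\bspi^k:=\bspi^k/\|\bspi^k\|$, which satisfies $\overline\bspi^k\geq\bzero$ and $\|\overline\bspi^k\|=1$. Passing to a subsequence $K'\subseteq K$ along which, simultaneously, $\bx^k\to\bx^*$ (such a subsequence exists by boundedness of $\{\bx^k\}$ on $K$), $\bz^k\to\bzero$, and $\overline\bspi^k\to\overline\bspi^*$, the limit $\overline\bspi^*$ inherits $\overline\bspi^*\geq\bzero$ and $\|\overline\bspi^*\|=1$, so $\overline\bspi^*\neq\bzero$.

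Next I would invoke \eqref{eq:E2}: the sequence $\bH\bx^k+\bc-\bA^T\bspi^k$ is bounded on $K$. Dividing through by $\|\bspi^k\|\to\infty$ and using that $\{\bH\bx^k+\bc\}$ is bounded on $K'$ (since $\{\bx^k\}$ is bounded there) yields $\bA^T\overline\bspi^k\to\bzero$, hence $\bA^T\overline\bspi^*=\bzero$. Similarly, from \eqref{eq:E1}, the sequence $\bS^k\bspi^k$ is bounded on $K$, so dividing by $\|\bspi^k\|$ gives $\bS^k\overline\bspi^k\to\bzero$. Since $\bx^k\to\bx^*$ and $\bz^k\to\bz^*=\bzero$ on $K'$, we have $\bs^k=\bA\bx^k+\bz^k-\bb\to\bA\bx^*-\bb=\bs^*$, and so $\bS^k\overline\bspi^k\to\bS^*\overline\bspi^*$. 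Combining the two limits yields $\bS^*\overline\bspi^*=\bzero$, completing the proof.

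The main delicate point is ensuring $\|\bspi^k\|\to\infty$ before normalizing; that is precisely what \eqref{eq:E4} is designed to deliver once we know $\varphi_k\to\infty$. Everything else is straightforward continuity and subsequence extraction, relying on the already-established boundedness of $\{\bx^k\}$ on $K$ (\cref{lem:z->0onK}(ii)) and of $\{\bz^k\}$ (\cref{lem:x_z_bounded}).
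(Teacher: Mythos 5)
Your proof is correct and follows essentially the same route as the paper's: extract a nonzero, nonnegative limit of normalized dual iterates (using \eqref{eq:E4} together with $\varphi_k\to\infty$), then divide the bounded quantities \eqref{eq:E1}--\eqref{eq:E2} by a divergent normalizer and pass to the limit along a subsequence realizing $\bx^*$. The only difference is cosmetic: the paper normalizes by $\varphi_k$, using \eqref{eq:E4} to show $\bspi^k/\varphi_k$ is bounded away from zero and \eqref{eq:E2} to show it is bounded, whereas you normalize by $\|\bspi^k\|$ so the limit is automatically a unit vector.
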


\begin{proof}

First, we have from \cref{reqP:phi_increase} \eqref{eq:E2} that
$\varphi_k\bone - (\dvik + \dviik)$ is bounded on $K$.
Letting
$\overline\dvi^k:=\frac{1}{\varphi_k}\dvik$
and $\overline\dvii^{k}:=\frac{1}{\varphi_k}\dviik$,
we conclude that $\overline\dvi^{k}+\overline\dvii^{k}\to\bone$
on $K$
and in view of \cref{reqP:phi_increase} \eqref{eq:E4},
$\overline\dvi^{k}$ is bounded away from $\bzero$ on $K$.
Further, since (see Master Algorithm)
$(\dvik,\dviik)=\bblambda^k$ has nonnegative 
components,
$\overline\dvi^k$ and $\overline\dvii^k$
are bounded on $K$, hence have limit points on $K$,
and every limit point $\overline\dvi^*$ of $\overline\dvi^{k}$ on $K$
satisfies $\overline\dvi^*\geq\bzero$ and $\overline\dvi^*\not=\bzero$.
Finally, since $\bz^*=\bzero$ and $\{\bx^{k}\}$ is bounded on $K$ 
(\cref{lem:z->0onK} (ii)), 
boundedness of \eqref{eq:E1}--\eqref{eq:E2} in~\cref{reqP:phi_increase}
yields, by dividing through by $\varphi_k$,
\begin{equation}
\bA^T\overline\bspi^* = \bzero,\quad  \bS^*\overline\bspi^*=\bzero \:.
\end{equation}
\end{proof}

\begin{theorem}
\label{thm:varphi_bounded}
(i) $\varphi_k$ is eventually constant and (ii) as $k\to\infty$,
$\bz^k\to\bzero$ and 
$\bx^k$ converges to the optimal solution set of~\eqref{eq:primal_cqp}.
\end{theorem}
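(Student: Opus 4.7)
The plan is to obtain part (i) by contradiction, using Requirement PU1 to reduce to the case $\varphi_k\to\infty$, and then to derive part (ii) by a direct appeal to Proposition~\ref{prop:constantPhi}.

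For part (i), suppose for contradiction that $\varphi_k$ is not eventually constant. Then, by Requirement~\ref{reqP:varphi}, $\varphi_k\to\infty$. Lemma~\ref{lem:x_z_bounded} gives that $\{\bz^k\}$ is bounded, so Lemma~\ref{lem:z->0onK} yields an infinite index set $K$ along which $\bz^k\to\bzero$ and $\{\bx^k\}$ is bounded, and with the boundedness properties of Requirement~\ref{reqP:phi_increase} holding. Extracting a subsequence (still indexed by $K$), let $\bx^*$ be a limit point of $\{\bx^k\}_K$. Lemma~\ref{lem:bddDag} then produces a vector $\overline\bspi^*\geq\bzero$, with $\overline\bspi^*\neq\bzero$, satisfying
\begin{equation}
\bA^T\overline\bspi^*=\bzero\quad\text{and}\quad \bS^*\overline\bspi^*=\bzero,
\end{equation}
where $\bs^*=\bA\bx^*-\bb\geq\bzero$.

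The heart of the argument, and the step I expect to be the only subtle one, is deriving a contradiction from the existence of such $\overline\bspi^*$ by playing off complementarity against the strict primal feasibility guaranteed by Assumption~\ref{as:PD}. On the one hand, $\bS^*\overline\bspi^*=\bzero$ gives $(\overline\bspi^*)^T(\bA\bx^*-\bb)=0$, i.e., $(\bA^T\overline\bspi^*)^T\bx^*-\bb^T\overline\bspi^*=-\bb^T\overline\bspi^*=0$, so $\bb^T\overline\bspi^*=0$. On the other hand, strict primal feasibility furnishes $\tilde\bx$ with $\bA\tilde\bx-\bb>\bzero$; since $\overline\bspi^*\geq\bzero$ has at least one strictly positive component,
\begin{equation}
0<(\overline\bspi^*)^T(\bA\tilde\bx-\bb)=(\bA^T\overline\bspi^*)^T\tilde\bx-\bb^T\overline\bspi^*=-\bb^T\overline\bspi^*,
\end{equation}
so $\bb^T\overline\bspi^*<0$. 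These two conclusions are incompatible, contradicting the supposition that $\varphi_k\to\infty$. Hence $\varphi_k$ is eventually constant.

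For part (ii), having established that $\varphi_k$ is eventually constant, Proposition~\ref{prop:constantPhi} immediately yields $\bz^k\to\bzero$ and convergence of $\bx^k$ to the optimal solution set of~\eqref{eq:primal_cqp}, completing the proof.
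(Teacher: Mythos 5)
Your proof is correct and follows essentially the same route as the paper: contradiction via Lemmas~\ref{lem:x_z_bounded}, \ref{lem:z->0onK}, and \ref{lem:bddDag}, then playing complementarity against strict primal feasibility, with part (ii) delegated to Proposition~\ref{prop:constantPhi}. The only (cosmetic) difference is in extracting the final contradiction: the paper restricts to the active-constraint submatrix $\bA_{\rm act}$ and shows $(\bA_{\rm act}(\hat\bx-\bx^*))^T\overline\bspi^*_{\rm act}$ is simultaneously zero and positive, whereas you evaluate $\bb^T\overline\bspi^*$ in two ways; both rest on the identical observation that $\overline\bspi^{*T}\bA(\tilde\bx-\bx^*)=0$.
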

\begin{proof}
To prove claim (i), proceeding by contradiction, 
suppose that $\varphi_k\to\infty$
and {let $K$ be as in ~\cref{lem:z->0onK}.}
Then in view of~\cref{lem:z->0onK},
$\bz^k\to\bz^*:=\bzero$ on $K$ and $\{\bx^k\}$ is bounded on $K$.
Let $\bx^*$ be a limit points of $\{\bx^k\}$ on $K$.
From \cref{lem:bddDag}, there exists $\overline\bspi^*\neq\bzero$,
with $\overline\bspi^*\geq\bzero$,
such that 
\begin{equation}
\label{eq:Api=0,Spi=0}
\bA^T\overline\bspi^*=\bzero \quand \bS^*\overline\bspi^*=\bzero,
\end{equation}
i.e., $\overline\pi_i^*=0$ for all $i$ such that $s^*_i>0$,
where $\bs^*:=\bA\bx^*-\bb$.
Next, let $\bA_{\rm act}$ be the submatrix of $\bA$ associated with
active constraints at $\bx^*$ (i.e., the rows of $\bA_{\rm act}$ are all those rows of $\bA$ with
index $i$ such that $s_i^*=0$); and let $\overline\bspi_{\rm act}^*$
be the corresponding subvector of $\overline\bspi^*$.
Then
$\bA_{\rm act}\bx^* = \bb_{\rm act}$ and~\eqref{eq:Api=0,Spi=0}
imply that
\begin{equation}
\label{eq:Api=0}
\bA_{\rm act}^T\overline\bspi_{\rm act}^*=\bzero.
\end{equation}
Now, invoking~\cref{as:PD}, 
let $\hat\bx$ be strictly feasible 
for \eqref{eq:primal_cqp}, i.e.,
$\bA\hat\bx>\bb$, in particular, $\bA_{\rm act}\hat\bx>\bb_{\rm act}$.
With $\bv:=\hat\bx-\bx^*$, by subtraction, we get $\bA_{\rm act}\bv>\bzero$.
Left-multiplying both sides of~\eqref{eq:Api=0} by
$\bv^T$ yields $(\bA_{\rm act}\bv)^T\overline\bspi_{\rm act}^*=0$,
a contradiction since~\eqref{eq:Api=0,Spi=0} 
together with $\overline\bspi^*\neq\bzero$ and
$\overline\bspi^*\geq\bzero$ implies that
$\overline\bspi_{\rm act}^*\neq\bzero$ and
$\overline\bspi_{\rm act}^*\geq\bzero$.  This proves the first claim.
The second claim follows from~\cref{prop:constantPhi}.
\end{proof}

\section{Problems with Equality Constraints}
\label{sec:eq-constrained}

A standard approach for handling linear equality constraints within
an inequality-constrained optimization framework is, after constructing
an initial point that satisfies the equality constraints, to simply carry 
out the inequality-constrained optimization on the affine space defined 
by the equality constraints, rather than on $\bbR^n$.  Search directions 
based on the inequality constraints are thus projected on that subspace.   
A drawback of such approach is that possible sparsity of the 
equality-constraint matrix is not inherited by the projection operator.
Further, unless special care is taken, the initial equality-feasible
point may be far removed from the region of interest, as its construction
does not take the objective function into account.
An alternative approach, proposed in~\cite{Mayne1976} in a 
nonlinear-programming (NLP)
context, deals with one side of the (possibly nonlinear) equality 
constraints (e.g., the side that is satisfied by the initial point) 
as an {\em inequality} constraint, and uses an exact (and smooth) 
$\ell_1$ penalty function to drive the iterates to feasibility.  
A refined version
of this approach was later used in~\cite{TWBUL-2003} in an interior-point
NLP context.  

Inspired by the latter, we now formulate each scalar 
(linear) equality as {\em two} inequality constraints, i.e, 
we equivalently express \eqref{eq:primal_cqp} as
\begin{equation}
\label{eq:primal_cqp_Eq}
\tag{$\tilde{\textup{P}}$}
\minimize_{\bx\in\bbR^n} \:
\bff(\bx):=\frac{1}{2}\bx^T\bH\bx+\bc^T\bx \mbox{~s.t.~} \:
\bA\bx \geq \bb,\: \bC\bx\geq\bd,\: -\bC\bx\geq-\bd,
\end{equation}
which, as we will demonstrate, can be handled 
within the same infeasible-start framework.
Its dual is given by
\begin{equation}
\label{eq:dual_cqp_Eq}
\tag{$\tilde{\textup{D}}$}
\maximize_{\bx,\bspi,\bseta,\bszeta}
\bspsi(\bx,\bspi,\bseta,\bszeta)  
{\rm ~s.t.~~}
\bH\bx+\bc-\bA^T\bspi-\bC^T(\bseta-\bszeta) = \bzero,
(\bspi,\bseta,\bszeta)\geq\bzero,
\end{equation}
where $\bx\in\bbR^{n}$, $\bspi\in\bbR^{m}$ , $\bseta\in\bbR^{p}$, $\bszeta\in\bbR^{p}$, and 
\begin{equation}
\bspsi(\bx,\bspi,\bseta,\bszeta) :=
-\frac{1}{2}\bx^T \bH\bx + \bspi^T\bb + (\bseta-\bszeta)^T\bd \:.
\end{equation}
The corresponding augmented problem is
\begin{align}
\tag{$\tilde{\textup{P}}_{\varphi}$}
\label{eq:primal_cqp_phi_rho}
&\minimize_{\bx,\bz,\by} \:
\bff(\bx)+\varphi\bone^T[\bz;\by] \mbox{~s.t.~}
\bA\bx +\bz \geq \bb,\: \bz\geq\bzero,\: \bC\bx+\by\geq\bd,\: \bC\bx-\by\leq\bd\\
\label{eq:dual_cqp_phi_rho}
\tag{$\tilde{\textup{D}}_{\varphi}$}
&\maximize_{\bx,\bspi,\bsxi,\bseta,\bszeta}
\bspsi(\bx,\bspi,\bseta,\bszeta):=
-\frac{1}{2}\bx^T \bH\bx + \bspi^T\bb + \bseta^T\bd - \bszeta^T\bd\\
{\rm s.t.~}
&\bH\bx+\bc-\bA^T\bspi-\bC^T(\bseta-\bszeta) = \bzero,
\bspi+\bsxi=\varphi\bone, \bseta+\bszeta=\varphi\bone,
(\bspi,\bsxi,\bseta,\bszeta)\geq\bzero\nonumber
\end{align}
with $\bx\in\bbR^{n}$, $\bz\in\bbR^{m}$, $\by\in\bbR^{p}$, $\bspi\in\bbR^{m}$, $\bsxi\in\bbR^{m}$ $\bseta\in\bbR^{p}$, $\bszeta\in\bbR^{p}$.
We will also make use of the slack variables
\begin{equation}
\bt_+^k:=\bC\bx^k+\by^k-\bd\geq\bzero,~
\bt_-^k:=-\bC\bx^k+\by^k+\bd\geq\bzero;
\end{equation}
note that $\bt_+^k+\bt_-^k=2\by^k$.

\begin{remark}
Note the dissymmetry between the way original inequalities are augmented 
and the way inequalities issued from equalities are augmented in
\eqref{eq:primal_cqp_phi_rho}: unlike $\bz\geq\bzero$, $\by\geq\bzero$ is not
included.  While including $\by\geq\bzero$ would have simplified 
(by exploiting the 
symmetry) the expression of requirements for the penalty-parameter update as 
well as the ensuing analysis, the three sets of constraints involving 
$\by$ would then form a structurally linearly dependent set
(the difference of the first two is twice the third one), and because 
all three are active when $\by=\bzero$ (which is the case at the solution
when $\varphi$ is large enough) this may rule out some possible base
iterations (such as, in theory, CR-MPC).
\end{remark}

Substituting $[\bA;\bC;-\bC]$ for $\bA$, $(\bz,\by)$ for $\bz$ 
and $(\bspi,\bseta-\bszeta)$ for $\bspi$, we obtain the following 
revised list of requirements for the penalty-parameter updating rule.

\setcounter{reqbase}{0}
\renewcommand{\thereqbase}{PU\arabic{reqbase}$'$}

\begin{reqbase}\label{reqP:varphi_Eq}
        $\{\varphi_k\}$ is a positive, nondecreasing scalar sequence
        that either is eventually constant or grows without bound.
\end{reqbase}

\begin{reqbase}\label{reqP:boundedz_Eq}
If $\{(\bz^k,\by^k)\}$ is unbounded, then $\varphi_k\to\infty$.
\end{reqbase}

\begin{reqbase}\label{reqP:large_enough_penalty_Eq}
If $\varphi_k$ is eventually constant and equal to $\hat\varphi$, and 
$\|G_1^k\|$, $\|G_2^k\|$, and $|G_3^k|$ tend to zero, where
\begin{subequations}\label{eq:E_Eq}
\begin{align}
\label{eq:E1_Eq}
&G_1^k:=(\bS^k\bspi^k,~\bZ^k\bsxi^k,~\bT_+^k\bseta^k, ~\bT_-^k\bszeta^k), \\
\label{eq:E2_Eq}
& G_2^k:=\big(\bH\bx^k+\bc-\bA^T\bspi^k-\bC^T(\bseta^k-\bszeta^k),
\bspi^k+\bsxi^k-\varphi_k\bone, \bseta^k+\bszeta^k-\varphi_k\bone\big), \\
\label{eq:E3_Eq}
& G_3^k:=\big(\bH\bx^k+\bc-\bA^T\bspi^k-\bC^T(\bseta^k-\bszeta^k)\big)^T\bx^k,
\end{align}
\end{subequations}
then $\hat\varphi> {\rm lim\,inf}\|[\dvik;{\bseta^k-\bszeta^k}]\|_\infty$.
\end{reqbase}

\begin{reqbase}\label{reqP:phi_increase_Eq}
If $\varphi_k\to\infty$ and $\{(\bz^k,\by^k)\}$ is bounded, 
then there exists an infinite index set $K$ such that 
$G_1^k$, $G_2^k$, $G_3^k$,
and $\sfrac{\varphi_k}{\max\{1,\|[\dvik;{\bseta^k-\bszeta^k}]\|\}}$
are bounded on $K$.
\end{reqbase}

With \Crefrange{reqP:varphi_Eq}{reqP:phi_increase_Eq} substituted
for \Crefrange{reqP:varphi}{reqP:phi_increase} and
the Master Algorithm extended in the obvious way to account for 
the additional variable $\by^k$, with some adjustments, the convergence 
analysis in section~\ref{sect:analysis} extends to cases when equality constraints
are present, as we show next.
The following (readily proved) extended version of \cref{lem:Pvarphi}
will be used.

\renewcommand{\thecuslemma}{4.0}
\begin{cuslemma}
\label{lem:Pvarphi_Eq}
Given $\varphi>0$, \eqref{eq:primal_cqp_phi_rho} is strictly feasible.  Further, for
$\varphi>0$ large enough, \eqref{eq:primal_cqp_phi_rho} is bounded, i.e., has a
nonempty solution set.
Finally, for $\varphi>0$, given any $\rho>0$ and $\alpha\in\bbR$, the set
$\cS:=\{(\bx,\bz,\by)\in\cF_\alpha: \|[\bz;\by]\|\leq\rho\}$ is bounded,
where
$\cF_\alpha:=\{(\bx,\bz,\by):\bz\geq\bzero,~\by\geq\bzero,
~\bA\bx+\bz\geq\bb,~\bC\bx+\by\geq\bd,~-\bC\bx+\by\geq-\bd, 
~\bff(\bx)+\varphi\bone^T[\bz;\by]\leq\alpha
\}$.
\end{cuslemma}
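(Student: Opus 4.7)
The plan is to mirror the three-part proof of \cref{lem:Pvarphi}, handling the extra variable $\by$ and the fact that each equality in~\eqref{eq:primal_cqp} has been replaced by a pair of inequalities in~\eqref{eq:primal_cqp_Eq}. Throughout, I will identify $\tilde{\bA}:=[\bA;\bC;-\bC]$, $\tilde{\bz}:=[\bz;\by;\by]$, and $\tilde{\bb}:=[\bb;\bd;-\bd]$ only when useful; otherwise I will keep the $(\bz,\by)$ notation so as to respect the dissymmetry between the two groups of slacks.

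For strict feasibility of~\eqref{eq:primal_cqp_phi_rho}, I will fix any $\bx\in\bbR^n$ and then pick $\bz$ and $\by$ componentwise large enough that $\bz>\bzero$, $\bA\bx+\bz>\bb$, $\bC\bx+\by>\bd$, and $-\bC\bx+\by>-\bd$ simultaneously; one can take $z_i=1+\max\{0,b_i-\ba_i^T\bx\}$ and $y_j=1+|c_j^T\bx-d_j|$. For boundedness of~\eqref{eq:primal_cqp_phi_rho} at large $\varphi$, I will combine strict feasibility with feasibility of~\eqref{eq:dual_cqp_phi_rho} at large $\varphi$, invoking the duality fact from~\cite{MonteiroAdler89b} recalled before \cref{as:PD}. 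The only non-routine step is to produce a feasible point for~\eqref{eq:dual_cqp_phi_rho}; for that, I will start from a strictly feasible dual point $(\hat\bx,\hat\bspi,\hat\bsmu)$ for \eqref{eq:dual_cqp_Eq} (which exists by \cref{as:PD}, since the dual of~\eqref{eq:primal_cqp_Eq} coincides with that of~\eqref{eq:primal_cqp} once the free multiplier $\hat\bsmu:=\hat\bseta-\hat\bszeta$ is introduced), split $\hat\bsmu$ as $\hat\bseta:=(\hat\bsmu)_+ +\epsilon\bone$ and $\hat\bszeta:=(\hat\bsmu)_- +\epsilon\bone$ with any $\epsilon>0$, and then set $\hat\bsxi:=\varphi\bone-\hat\bspi$, $\hat\bseta\mapsto\hat\bseta$, $\hat\bszeta\mapsto\hat\bszeta$ together with a matching $\varphi\bone-\hat\bseta-\hat\bszeta$ adjustment obtained by inflating both $\hat\bseta$ and $\hat\bszeta$ so that their sum equals $\varphi\bone$. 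For $\varphi$ larger than $\max\{\|\hat\bspi\|_\infty,\|\hat\bseta\|_\infty+\|\hat\bszeta\|_\infty\}$ this yields a \eqref{eq:dual_cqp_phi_rho}-feasible point.

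For the third claim, I will proceed by contradiction and invoke a standard recession-cone argument, exactly as in the third step of the proof of \cref{lem:Pvarphi}. Assume $\cS$ is unbounded; since $\cS$ is closed and convex, it contains a nontrivial recession direction $(\bv_x,\bv_z,\bv_y)\neq\bzero$. The bound $\|[\bz;\by]\|\leq\rho$ forces $\bv_z=\bzero$ and $\bv_y=\bzero$, so that necessarily $\bv_x\neq\bzero$. The feasibility constraints of $\cS$ then yield $\bA\bv_x\geq\bzero$, $\bC\bv_x\geq\bzero$, and $-\bC\bv_x\geq\bzero$, the last pair combining to $\bC\bv_x=\bzero$; and the sublevel-set condition $\bbf_\varphi(\bbx)\leq\alpha$, combined with $\bH\succeq\bzero$, yields $\bH\bv_x=\bzero$ and $\bc^T\bv_x+\varphi\bone^T[\bv_z;\bv_y]=\bc^T\bv_x\leq0$. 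Thus $\bv_x$ is a nontrivial recession direction of~\eqref{eq:primal_cqp}. The case $\bc^T\bv_x=0$ contradicts boundedness of the optimal solution set of~\eqref{eq:primal_cqp} (\cref{as:PD}), while $\bc^T\bv_x<0$ contradicts nonemptiness of that set; either way we obtain the desired contradiction.

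The only part that requires genuine care is the splitting of the equality multiplier in the second step: one must choose the offset $\epsilon>0$ large enough that both $\hat\bseta$ and $\hat\bszeta$ are strictly positive yet small enough that $\varphi$-inflation keeps them under $\varphi\bone$. Because the magnitudes of $\hat\bspi$, $\hat\bseta$, and $\hat\bszeta$ can be bounded independently of $\varphi$, this is a linear constraint on $\varphi$ that is met for all sufficiently large $\varphi$, closing the argument.
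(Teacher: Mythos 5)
Your proof is correct and follows essentially the same route as the paper, which states \cref{lem:Pvarphi_Eq} as a ``readily proved'' extension of \cref{lem:Pvarphi} and leaves the details to the reader: your three steps (explicit large slacks for strict feasibility, construction of a \eqref{eq:dual_cqp_phi_rho}-feasible point by splitting the free equality multiplier as $\bseta=(\varphi\bone+\hat\bsmu)/2$, $\bszeta=(\varphi\bone-\hat\bsmu)/2$ for $\varphi\geq\|\hat\bsmu\|_\infty$, and the recession-direction contradiction with $\bC\bv_x=\bzero$ added) are precisely the intended adaptation. The only cosmetic remark is that the $\epsilon$-offset machinery in your second step is unnecessary---the symmetric split above already gives nonnegativity for large $\varphi$ while preserving $\bseta-\bszeta=\hat\bsmu$---but this does not affect correctness.
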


As in section~\ref{sect:analysis}, we first show that, for sufficiently large penalty 
parameter $\varphi$, the solutions to the augmented primal--dual pair 
agree with the ones to the original primal--dual pair.
\begin{lemma} 
	\label{lem:phi>lambda_Eq}
	Suppose $(\bx^*,\bz^*,\by^*,\bspi^*,\bsxi^*,\bseta^*,\bszeta^*)$
	solves~\eqref{eq:primal_cqp_phi_rho}--\eqref{eq:dual_cqp_phi_rho} for
	some\\ ${\varphi>\|[\bspi^*;{\bseta^*-\bszeta^*}]\|_\infty}$.  
	Then $\bz^*=\bzero$, $\by^*=\bzero$, and
	$(\bx^*,\bspi^*,\bseta^*,\bszeta^*)$ 
	solves~\eqref{eq:primal_cqp_Eq}--\eqref{eq:dual_cqp_Eq}.
\end{lemma}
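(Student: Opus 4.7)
The plan is to mimic the proof of \cref{lem:phi>lambda} for the $\bz^*$ component and then devise a parallel but more delicate argument for $\by^*$ that exploits the \emph{pair} of complementary-slackness relations attached to each scalar equality. Since $\varphi>\|[\bspi^*;\bseta^*-\bszeta^*]\|_\infty$ implies in particular $\varphi>\|\bspi^*\|_\infty$, dual feasibility $\bspi^*+\bsxi^*=\varphi\bone$ yields $\bsxi^*>\bzero$, and the complementary slackness condition $\bZ^*\bsxi^*=\bzero$ then forces $\bz^*=\bzero$ exactly as in the inequality-only case.

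The hard part will be showing $\by^*=\bzero$: the analogous one-shot argument does not apply because neither $\bseta^*$ nor $\bszeta^*$ is, a priori, forced away from $\varphi\bone$ by the scalar bound $|\eta_i^*-\zeta_i^*|<\varphi$ alone. The key observation is that $\eta_i^*+\zeta_i^*=\varphi$ together with $\eta_i^*,\zeta_i^*\geq 0$ and $|\eta_i^*-\zeta_i^*|<\varphi$ forces both $\eta_i^*$ and $\zeta_i^*$ to lie \emph{strictly} inside $(0,\varphi)$: if $\eta_i^*=0$ then $\zeta_i^*=\varphi$ and $|\eta_i^*-\zeta_i^*|=\varphi$, contradicting the strict bound, and symmetrically for $\zeta_i^*=0$. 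Complementary slackness $\bT_+^*\bseta^*=\bzero$ and $\bT_-^*\bszeta^*=\bzero$ then forces $\bt_+^*=\bzero$ and $\bt_-^*=\bzero$, from which $\by^*=\tfrac{1}{2}(\bt_+^*+\bt_-^*)=\bzero$ and, as a by-product, $\bC\bx^*=\bd$.

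With $\bz^*=\bzero$ and $\by^*=\bzero$ in hand, the remaining verification that $(\bx^*,\bspi^*,\bseta^*,\bszeta^*)$ satisfies the KKT system of \eqref{eq:primal_cqp_Eq}--\eqref{eq:dual_cqp_Eq} is routine: primal feasibility reduces to $\bA\bx^*\geq\bb$ (obtained from $\bA\bx^*+\bz^*\geq\bb$) together with $\bC\bx^*=\bd$ just derived; the dual stationarity identity is inherited verbatim from \eqref{eq:dual_cqp_phi_rho}; nonnegativity of $\bspi^*,\bseta^*,\bszeta^*$ carries over; and the inequality complementary slackness $\bS^*\bspi^*=\bzero$ is already part of the KKT conditions for \eqref{eq:primal_cqp_phi_rho}--\eqref{eq:dual_cqp_phi_rho}. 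Hence $(\bx^*,\bspi^*,\bseta^*,\bszeta^*)$ is an optimal primal--dual pair for \eqref{eq:primal_cqp_Eq}--\eqref{eq:dual_cqp_Eq}.
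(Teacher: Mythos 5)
Your proof is correct and follows essentially the same route as the paper's: derive $\bz^*=\bzero$ exactly as in \cref{lem:phi>lambda}, then use $\bseta^*+\bszeta^*=\varphi\bone$ together with $\|\bseta^*-\bszeta^*\|_\infty<\varphi$ and nonnegativity to conclude $\bseta^*,\bszeta^*>\bzero$, whence complementary slackness gives $\bt_+^*=\bt_-^*=\bzero$ and $\by^*=\bzero$, and feasibility for \eqref{eq:primal_cqp_Eq}--\eqref{eq:dual_cqp_Eq} then yields optimality. Your contrapositive phrasing of the strict-positivity step and your slightly more explicit KKT verification at the end are only cosmetic differences.
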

\begin{proof}
First, $\bz^*=\bzero$ follows as in the proof 
of \cref{lem:phi>lambda}.  Next, $\varphi>\|\bseta^*-\bszeta^*\|_\infty$ implies that 
$\varphi\bone>\bseta^*-\bszeta^*$, and since $\bseta^*,\bszeta^*\geq\bzero$,
$\varphi\bone>\bseta^*$; similarly, $\varphi\bone>\bszeta^*$.  Since feasibility
for \cref{eq:dual_cqp_phi_rho} implies $\bseta^*+\bszeta^*=\varphi\bone$,
it follows that $\bseta^*,\bszeta^*>\bzero$.  Complementary slackness then
implies that $\bC\bx^*-\bd+\by^*=\bzero$ and $\bC\bx^*-\bd-\by^*=\bzero$,
hence that $\by^*=\bzero$.  Therefore, $(\bx^*,\bspi^*,\bseta^*,\bszeta^*)$
is feasible, thus optimal, for~\eqref{eq:primal_cqp_Eq}--\eqref{eq:dual_cqp_Eq}.
\end{proof}

\begin{proposition}
	\label{prop:constantPhi_Eq}
	Suppose $\varphi_k$ is eventually constant.
	Then, $[\bz^k;\by^k]\to\bzero$ as $k\to\infty$ and
	$\bx^k$ converges to the optimal solution set of~\eqref{eq:primal_cqp_Eq}.
\end{proposition}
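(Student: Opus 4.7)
The plan is to follow closely the structure of the proof of \cref{prop:constantPhi}, with the obvious adjustments to accommodate the additional relaxation variable $\by^k$ and the dissymmetric treatment of the equality-derived constraints in \eqref{eq:primal_cqp_phi_rho}. Let $\hat\varphi:=\lim_{k\to\infty}\varphi_k$, so $\varphi_k=\hat\varphi$ for all $k$ large enough.

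First, I would argue boundedness of the relaxation sequence: since $\varphi_k$ does not diverge, the contrapositive of \cref{reqP:boundedz_Eq} forces $\{(\bz^k,\by^k)\}$ to be bounded. Combining (i) this boundedness, (ii) feasibility of $(\bx^k,\bz^k,\by^k)$ for \eqref{eq:primal_cqp_phi_rho} (the natural extension of \cref{reqB:descent} to the augmented problem, with $[\bA;\bC;-\bC]$ playing the role of the inequality-constraint matrix), and (iii) the monotone decrease of $\bff(\bx^k)+\hat\varphi\bone^T[\bz^k;\by^k]$ on entry into the penalty-parameter update (analogue of \cref{rem:monotone}), the third claim of \cref{lem:Pvarphi_Eq} then delivers boundedness of $\{(\bx^k,\bz^k,\by^k)\}$.

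Next, I would apply \cref{reqB:cvgce} to the reformulated inequality-constrained problem obtained by substituting $[\bA;\bC;-\bC]$ for $\bA$, $[\bz;\by]$ for the slack block, and $(\bspi,\bseta,\bszeta)$ for the multiplier block. This yields $\|G_1^k\|\to 0$ and $\|G_2^k\|\to 0$ with $G_1^k,G_2^k$ as in \eqref{eq:E_Eq}, and boundedness of $\{\bx^k\}$ together with the vanishing dual residual inside $G_2^k$ further gives $|G_3^k|\to 0$. \cref{reqP:large_enough_penalty_Eq} then guarantees
\[
\hat\varphi \;>\; \liminf_{k\to\infty}\|[\bspi^k;\bseta^k-\bszeta^k]\|_\infty,
\]
so that, along an infinite subsequence, $\hat\varphi$ strictly exceeds $\|[\bspi^k;\bseta^k-\bszeta^k]\|_\infty$. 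Combined with the fact that every limit point $(\bx^*,\bz^*,\by^*,\bspi^*,\bsxi^*,\bseta^*,\bszeta^*)$ of the bounded iterate sequence is primal--dual optimal for \eqref{eq:primal_cqp_phi_rho}--\eqref{eq:dual_cqp_phi_rho} (by the convergence issued from \cref{reqB:cvgce}), \cref{lem:phi>lambda_Eq} applied at such a limit point forces $\bz^*=\bzero$, $\by^*=\bzero$, and optimality of $\bx^*$ for \eqref{eq:primal_cqp_Eq}. Boundedness of the solution set of \eqref{eq:primal_cqp_phi_rho} then follows from \cref{as:PD} applied to \eqref{eq:primal_cqp_Eq}, so the \emph{same} conclusion $\bz^*=\bzero$, $\by^*=\bzero$ must hold at \emph{every} limit point of the bounded sequence $\{(\bx^k,\bz^k,\by^k)\}$, which yields full-sequence convergence $[\bz^k;\by^k]\to\bzero$ and convergence of $\bx^k$ to the optimal solution set of \eqref{eq:primal_cqp_Eq}.

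The main obstacle I anticipate is the bookkeeping surrounding the dissymmetric augmentation: because $\by\geq\bzero$ is deliberately \emph{not} imposed in \eqref{eq:primal_cqp_phi_rho} (see the remark preceding \cref{reqP:varphi_Eq}), one cannot mimic the $\bZ^k\bsxi^k\to\bzero$ step verbatim to drive $\by^k\to\bzero$; instead this must be extracted from the joint vanishing of $\bT_+^k\bseta^k$ and $\bT_-^k\bszeta^k$ in $G_1^k$, together with the strict positivity of $(\bseta^*,\bszeta^*)$ furnished by the penalty threshold as in the proof of \cref{lem:phi>lambda_Eq}. A secondary subtlety is that \cref{reqP:large_enough_penalty_Eq} only controls the $\liminf$; passing from a subsequential limit where $\bz^*=\bzero$, $\by^*=\bzero$ to the full sequence then hinges on boundedness of the optimal-solution set of \eqref{eq:primal_cqp_phi_rho}, which is why the derivation of that boundedness (via \cref{lem:phi>lambda_Eq} plus \cref{as:PD}) cannot be omitted.
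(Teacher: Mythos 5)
Your proposal is correct and follows essentially the same route as the paper: the paper's own proof of \cref{prop:constantPhi_Eq} simply declares it ``identical to that of \cref{prop:constantPhi}'' modulo the substitutions $\bz^k\mapsto[\bz^k;\by^k;\by^k]$, $\bblambda^k\mapsto(\bspi^k,\bsxi^k,\bseta^k,\bszeta^k)$, and the primed requirements and lemmas (\cref{reqP:boundedz_Eq,reqP:large_enough_penalty_Eq}, \cref{lem:phi>lambda_Eq}, \cref{lem:Pvarphi_Eq}), which is exactly the chain you spell out. The two subtleties you flag (extracting $\by^*=\bzero$ from the joint vanishing of $\bT_+^k\bseta^k$ and $\bT_-^k\bszeta^k$, and the $\liminf$ in \cref{reqP:large_enough_penalty_Eq}) are real but are already absorbed into \cref{lem:phi>lambda_Eq} and the boundedness of the solution set of \eqref{eq:primal_cqp_phi_rho}, just as in the unprimed proof.
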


\begin{proof}
	The proof is identical to that of~\cref{prop:constantPhi}, subject 
	to replacing throughout $\bz^k$ with $[\bz^k;\by^k;\by^k]$, 
        $\bbx^k$ with $(\bx^k,\bz^k,\by^k)$, and $\bblambda^k$ 
	with $(\bspi^k,\bsxi^k,\bseta^k,\bszeta^k)$ and invoking 
        \cref{reqP:boundedz_Eq,reqP:large_enough_penalty_Eq}
        instead of
        \ref{reqP:boundedz} and \ref{reqP:large_enough_penalty},
        \cref{lem:phi>lambda_Eq} instead of \cref{lem:phi>lambda},
        and \cref{lem:Pvarphi_Eq} instead of \cref{lem:Pvarphi}.
\end{proof}

Similar to \cref{lem:technical}, the following lemma provides 
an upper bound on the magnitude of relaxation variables $\bz$ and $\by$
when $(\bx,\bz,\by)$ is feasible for~\eqref{eq:primal_cqp_phi_rho}
and $\varphi$ is large enough.
This bound is then used in \cref{lem:x_z_bounded_Eq} 
to show boundedness of $\{(\bz^k,\by^k)\}$.
Note that, in contrast with \cref{lem:technical}, \cref{lem:technical_Eq} assumes a lower bound on the penalty parameter $\varphi$ that is more restrictive than the one in \cref{lem:phi>lambda_Eq}.
This however does not interfere with the analysis, since \cref{lem:technical_Eq} is only invoked in the proof of \cref{lem:x_z_bounded_Eq}, in which $\varphi$ is assumed (in a contradiction argument) to be unbounded.

\begin{lemma}
\label{lem:technical_Eq}
Let $(\hat\bx,\hat\bspi, { \hat\dvei, \hat\dveii})$
be feasible for~\eqref{eq:dual_cqp_Eq}
and $(\bx,\bz {, \by})$ be feasible for \eqref{eq:primal_cqp_phi_rho}, and 
let $\varphi>{\|[\hat\bspi;2\hat\bseta;2\hat\bszeta]\|_\infty}$.
Then
\begin{equation}
\|[\bz;\by]\|_\infty \leq 
\frac{\bff(\bx)+\varphi\bone^T[\bz;\by]-\bspsi(\hat\bx,\hat\bspi,\hat\bseta,\hat\bszeta)}
{{\half}\big(\varphi - \|[\hat\bspi;{2\hat\bseta;2\hat\bszeta}]\|_\infty\big)} \: .
\end{equation}
\end{lemma}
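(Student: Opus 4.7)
The plan is to mimic the proof of \cref{lem:technical} while handling the three sets of primal inequalities and the three nonnegative dual multiplier vectors $(\hat\bspi,\hat\bseta,\hat\bszeta)$. A preliminary observation I would record at the outset is that $\by\geq\bzero$ automatically: feasibility of $(\bx,\bz,\by)$ for \eqref{eq:primal_cqp_phi_rho} gives both $\by\geq\bd-\bC\bx$ and $\by\geq\bC\bx-\bd$, hence componentwise $\by\geq\bzero$. This is essential because a componentwise bound on $\by$ will be derived only after showing that each individual term on the left-hand side of a certain inequality is nonnegative.

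First, I would aggregate the three primal-feasibility inequalities weighted by the respective nonnegative dual vectors $\hat\bspi,\hat\bseta,\hat\bszeta$. After regrouping, the $\bC\bx$ terms combine into $(\hat\bseta-\hat\bszeta)^T\bC\bx$ and the $\by$ terms into $(\hat\bseta+\hat\bszeta)^T\by$. Dual feasibility for \eqref{eq:dual_cqp_Eq} then allows replacement of $\hat\bspi^T\bA\bx + (\hat\bseta-\hat\bszeta)^T\bC\bx$ by $(\bH\hat\bx+\bc)^T\bx$, and \eqref{eq:positivesquare} bounds this quantity above by $\bff(\bx)+\tfrac12\hat\bx^T\bH\hat\bx$, exactly as in step \eqref{eq:2.6} of the proof of \cref{lem:technical}. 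Rearranging yields the analogue
\[
-\hat\bspi^T\bz - (\hat\bseta+\hat\bszeta)^T\by \;\leq\; \bff(\bx) - \bspsi(\hat\bx,\hat\bspi,\hat\bseta,\hat\bszeta).
\]

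Next, adding $\varphi\bone^T[\bz;\by]$ to both sides turns this into
\[
(\varphi\bone - \hat\bspi)^T\bz + (\varphi\bone - \hat\bseta - \hat\bszeta)^T\by \;\leq\; \bff(\bx) + \varphi\bone^T[\bz;\by] - \bspsi(\hat\bx,\hat\bspi,\hat\bseta,\hat\bszeta)=:\cR.
\]
Under the assumption $\varphi>\|[\hat\bspi;2\hat\bseta;2\hat\bszeta]\|_\infty$, every coefficient on the left is strictly positive, and since $\bz,\by\geq\bzero$, every term is nonnegative; hence, for any index $i$ or $j$, a single term is at most the full left-hand side, giving $(\varphi-\hat\pi_i)z_i\leq\cR$ and $(\varphi-\hat\eta_j-\hat\zeta_j)y_j\leq\cR$. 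Dividing produces the sought componentwise bounds.

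The only (mild) subtlety is to express both denominators uniformly in terms of the single norm $M:=\|[\hat\bspi;2\hat\bseta;2\hat\bszeta]\|_\infty$. For $z_i$ this is immediate from $\hat\pi_i\leq\|\hat\bspi\|_\infty\leq M$. For $y_j$ I would use $\hat\eta_j+\hat\zeta_j\leq\|\hat\bseta\|_\infty+\|\hat\bszeta\|_\infty\leq 2\max(\|\hat\bseta\|_\infty,\|\hat\bszeta\|_\infty)\leq M$; this is precisely why the threshold on $\varphi$ carries the factors of $2$ in front of $\hat\bseta,\hat\bszeta$. In both cases the denominator is at least $\varphi-M\geq\tfrac12(\varphi-M)$, which absorbs the (non-tight) factor $\tfrac12$ appearing in the stated bound. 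The slack is harmless, as the lemma is invoked only inside the contradiction argument of \cref{lem:x_z_bounded_Eq}, where $\varphi\to\infty$.
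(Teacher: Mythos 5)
Your proof is correct and follows essentially the same route as the paper's: weight the three primal feasibility inequalities by the nonnegative multipliers, invoke dual feasibility together with \eqref{eq:positivesquare}, add $\varphi\bone^T[\bz;\by]$ to both sides, and read off componentwise bounds. The only differences are cosmetic---you keep the $\by$-coefficient combined as $\varphi\bone-\hat\bseta-\hat\bszeta$ where the paper splits it into $(\tfrac12\varphi\bone-\hat\bseta)+(\tfrac12\varphi\bone-\hat\bszeta)$ (which is why you obtain the harmlessly tighter denominator $\varphi-\|[\hat\bspi;2\hat\bseta;2\hat\bszeta]\|_\infty$ before relaxing to half of it), and you make explicit the observation, left implicit in the paper, that $\by\geq\bzero$ follows from the two-sided constraints on $\bC\bx$ since $\by\geq\bzero$ is not an explicit constraint of \eqref{eq:primal_cqp_phi_rho}.
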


\begin{proof}
Feasibility of $(\bx,\bz,\by)$ for \eqref{eq:primal_cqp_phi_rho} implies
that $\bA\bx+\bz\geq\bb$, $\bC\bx+\by\geq\bd$, and $-\bC\bx+\by\geq-\bd$ so that,
since $\hat\dvi\geq\bzero$, $\hat\dvei\geq\bzero$, and $\hat\dveii\geq\bzero$ 
(feasible for~\eqref{eq:dual_cqp_Eq}),
\begin{equation}
\hat\bspi^T \bA \bx + \hat\bspi^T\bz \geq \bb^T\hat\bspi, 
\quad\hat\bseta^T \bC \bx + \hat\bseta^T\by \geq \hat\bseta^T\bd,
\quad  -\hat\bszeta^T \bC \bx + \hat\bszeta^T\by \geq -\hat\bszeta^T\bd\:.
\end{equation}
Since feasibility of $(\hat\bx,\hat\bspi,\hat\bseta,\hat\bszeta)$ 
for~\eqref{eq:dual_cqp_Eq}
implies $\bH\hat\bx + \bc = \bA^T\hat\bspi+\bC^T(\hat\bseta-\hat\bszeta)$, 
it follows that
\begin{equation}
\label{eq:2.6_Eq}
-\hat\bspi^T\bz - \hat\bseta^T \by  - \hat\bszeta^T\by
\leq (\bH\hat\bx+\bc)^T\bx - \bb^T\hat\bspi 
-\hat\dvei^T \bd + \hat\dveii^T \bd
\leq \bff(\bx) -\bspsi(\hat\bx,\hat\bspi,\hat\bseta,\hat\bszeta),
\end{equation}
where we again used~\eqref{eq:positivesquare}.
Since $\varphi>\|[\hat\bspi;{2\hat\bseta;2\hat\bszeta}]\|_\infty$, 
we have
$\hat{\bsxi}:=\varphi\bone - \hat\bspi>\bzero$,
$\hat\bsalpha_1:={\half}\varphi\bone-\hat\bseta>\bzero$,
and $\hat\bsalpha_2:={\half}\varphi\bone-\hat\bszeta>\bzero$.
Adding $\varphi(\bone^T\bz +\bone^T\by)$ to both sides 
of~\eqref{eq:2.6_Eq} then yields
\begin{equation}
[\hat\bsxi;\hat\bsalpha_1;\hat\bsalpha_2]^T[\bz;\by;\by]
\leq \bff(\bx)
+ \varphi\bone^T[\bz;\by] -\bspsi(\hat\bx,\hat\bspi,\hat\bseta,\hat\bszeta).
\end{equation}
Then, since 
$(\bz,\by)\geq\bzero$
(feasible for~\eqref{eq:primal_cqp_phi_rho}) 
\begin{equation}
[\hat\bsxi;\hat\bsalpha_1;\hat\bsalpha_2]_i[\bz;\by;\by]_i
\leq [\hat\bsxi;\hat\bsalpha_1;\hat\bsalpha_2]^T[\bz;\by;\by]
\leq \bff(\bx) 
+  \varphi\bone^T[\bz;\by] -\bspsi(\hat\bx,\hat\bspi,\hat\bseta,\hat\bszeta),
\end{equation}
yielding, for $i=1,\ldots,m+2p$,
\begin{equation}
[\bz;\by;\by]_i\leq 
\frac{\bff(\bx) + \varphi\bone^T[\bz;\by] - \bspsi(\hat\bx,\hat\bspi,\hat\bseta,\hat\bszeta)}
{[\hat\bsxi;\hat\bsalpha_1;\hat\bsalpha_2]_i}
\leq \frac{\bff(\bx) + \varphi\bone^T[\bz;\by] - \bspsi(\hat\bx,\hat\bspi,\hat\bseta,\hat\bszeta)}
{{\half}\big(\varphi-\|[\hat\bspi;{2}\hat\bseta;{2}\hat\bszeta]\|_\infty\big)}\:,
\end{equation}
where the last inequality can be verified by noting (i) that
$(\hat\alpha_1)_j\geq \frac{1}{2}\varphi-\|\hat\bseta\|_\infty$
and $(\hat\alpha_2)_j\geq \frac{1}{2}\varphi-\|\hat\bszeta\|_\infty$,
(ii) that since $\varphi-\|\hat\bspi\|_\infty>0$,
$\hat\xi_j\geq\varphi-\|\hat\bspi\|_\infty
>\frac{1}{2}(\varphi-\|\hat\bspi\|_\infty)$,
and (iii) that this implies that
$[\hat\bsxi;\hat\bsalpha_1;\hat\bsalpha_2]_i
\geq\min\{\frac{1}{2}(\varphi-\|\hat\bspi\|_\infty), 
\frac{1}{2}\varphi-\|\hat\bseta\|_\infty,
\frac{1}{2}\varphi-\|\hat\bszeta\|_\infty\}$.
Since $(\bz,\by)\geq\bzero$, the claim follows.
\end{proof}

\begin{lemma}
\label{lem:x_z_bounded_Eq}
Sequence $\{(\bz^k,\by^k)\}$ is bounded.
\end{lemma}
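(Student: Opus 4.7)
The plan is to mirror the proof of \cref{lem:x_z_bounded}, with adjustments to account for the extra slack variable $\by^k$, the three primal-feasibility inequalities in \eqref{eq:primal_cqp_phi_rho}, and the more conservative lower bound on the penalty parameter required by \cref{lem:technical_Eq}. Proceeding by contradiction, I would suppose $\{(\bz^k,\by^k)\}$ is unbounded, so by \cref{reqP:boundedz_Eq} one has $\varphi_k\to\infty$. Pick any $(\hat\bx,\hat\bspi,\hat\bseta,\hat\bszeta)$ feasible for \eqref{eq:dual_cqp_Eq}; such a point exists by \cref{as:PD}, since any dual-feasible pair for \eqref{eq:primal_cqp} extends to one for \eqref{eq:dual_cqp_Eq} by splitting the free equality multiplier into its positive and negative parts. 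Set $M:=\|[\hat\bspi;2\hat\bseta;2\hat\bszeta]\|_\infty$. For all $k$ with $\varphi_k>M$, \cref{lem:technical_Eq} applied to $(\bx^{k-1},\bz^{k-1},\by^{k-1})$ with penalty $\varphi_k$ gives
\begin{equation}
\|[\bz^{k-1};\by^{k-1}]\|_\infty \leq \nu_k := \frac{\bff(\bx^{k-1})+\varphi_k\bone^T[\bz^{k-1};\by^{k-1}]-\bspsi(\hat\bx,\hat\bspi,\hat\bseta,\hat\bszeta)}{\half(\varphi_k-M)}.
\end{equation}

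To close the contradiction it suffices to show that $\{\nu_k\}$ is bounded, for which I would prove $\nu_{k+1}\leq\nu_k$ eventually. The equality-augmented analog of \cref{rem:monotone} yields $\bff(\bx^k)+\varphi_k\bone^T[\bz^k;\by^k]\leq \bff(\bx^{k-1})+\varphi_k\bone^T[\bz^{k-1};\by^{k-1}]$, so, in view of the monotone nondecrease of $\varphi_k$ (\cref{reqP:varphi_Eq}), it is enough to show that, for each fixed $k$, the function
\begin{equation}
g_k(\varphi):=\frac{\bff(\bx^k)+\varphi\bone^T[\bz^k;\by^k]-\bspsi(\hat\bx,\hat\bspi,\hat\bseta,\hat\bszeta)}{\half(\varphi-M)}
\end{equation}
is nonincreasing on $(M,\infty)$. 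A direct quotient-rule computation shows the sign of $g_k'(\varphi)$ there is the opposite of that of $\bff(\bx^k)+M\,\bone^T[\bz^k;\by^k]-\bspsi(\hat\bx,\hat\bspi,\hat\bseta,\hat\bszeta)$, so the argument reduces to verifying this last quantity is nonnegative.

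The main (and only substantive) obstacle is establishing this last inequality; it plays the role of \eqref{eq:ineq1}--\eqref{eq:ineq2} in the inequality-only case and is precisely where the factor of $2$ attached to $\hat\bseta$ and $\hat\bszeta$ in the definition of $M$ earns its keep. Combining \eqref{eq:positivesquare} with the dual-feasibility identity $\bH\hat\bx+\bc=\bA^T\hat\bspi+\bC^T(\hat\bseta-\hat\bszeta)$ gives $\bff(\bx^k)+\half\hat\bx^T\bH\hat\bx \geq \hat\bspi^T\bA\bx^k+(\hat\bseta-\hat\bszeta)^T\bC\bx^k$. Pairing the three primal-feasibility inequalities $\bA\bx^k+\bz^k\geq\bb$, $\bC\bx^k+\by^k\geq\bd$, and $-\bC\bx^k+\by^k\geq-\bd$ with the nonnegative multipliers $\hat\bspi$, $\hat\bseta$, and $\hat\bszeta$ respectively, and summing, produces
\begin{equation}
\bff(\bx^k)+\hat\bspi^T\bz^k+(\hat\bseta+\hat\bszeta)^T\by^k \ \geq\ \bspsi(\hat\bx,\hat\bspi,\hat\bseta,\hat\bszeta).
\end{equation}
Since $\|\hat\bspi\|_\infty\leq M$ and $\|\hat\bseta+\hat\bszeta\|_\infty\leq \|\hat\bseta\|_\infty+\|\hat\bszeta\|_\infty\leq M/2+M/2=M$, and $\bz^k,\by^k\geq\bzero$, the left-hand side is bounded above by $\bff(\bx^k)+M\bone^T[\bz^k;\by^k]$, delivering the required inequality and hence the contradiction.
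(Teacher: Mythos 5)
Your proposal is correct and follows exactly the route the paper intends: the paper's own proof is a one-line pointer back to \cref{lem:x_z_bounded} with the substitutions $\bz^k\mapsto(\bz^k,\by^k)$, $\bspsi(\hat\bx,\hat\bspi)\mapsto\bspsi(\hat\bx,\hat\bspi,\hat\bseta,\hat\bszeta)$, and $\|\hat\bspi\|_\infty\mapsto\|[\hat\bspi;2\hat\bseta;2\hat\bszeta]\|_\infty$, and you have carried out those substitutions faithfully, including the one genuinely new computation (pairing the three primal inequalities with $\hat\bspi,\hat\bseta,\hat\bszeta$ and using $\|\hat\bseta+\hat\bszeta\|_\infty\leq M$ to justify the factor of $2$ and the $\half$ in the denominator).
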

\begin{proof}
The proof is identical to that of~\cref{lem:x_z_bounded}
(invoking \cref{lem:technical_Eq} instead of \cref{lem:technical}),
subject to replacing throughout $\bz^k$ with 
$(\bz^k,\by^k)$,
$\bspsi(\hat\bx,\hat\bspi)$
with $\bspsi(\hat\bx,\hat\bspi,\hat\bseta,\hat\bszeta)$, and 
$\|\hat\bspi\|_\infty$ with $\|[\hat\bspi;{2\hat\bseta;2\hat\bszeta}]\|_\infty$.
\end{proof}

\begin{lemma}
\label{lem:z->0onK_Eq}
Suppose $\varphi_k\to\infty$ as $k\to\infty$.   Then
there exists an infinite index set $K$ such that
(i) $(\bz^k,\by^k)\to\bzero$ on $K$ and 
(ii) $\{\bx^k\}$ is bounded on $K$.
\end{lemma}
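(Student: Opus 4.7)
The approach is to mirror the proof of \cref{lem:z->0onK}, treating $[\bA;\bC;-\bC]$ as an enlarged inequality-constraint matrix with slack $[\bs^k;\bt_+^k;\bt_-^k]$, relaxation $[\bz^k;\by^k;\by^k]$, and multipliers $[\bspi^k;\bseta^k;\bszeta^k]$, while exploiting the redundancy $\bt_+^k+\bt_-^k=2\by^k$ to circumvent an absence of strict positivity in $\hat\bseta,\hat\bszeta$. Since $\varphi_k\to\infty$ and $\{(\bz^k,\by^k)\}$ is bounded by \cref{lem:x_z_bounded_Eq}, \cref{reqP:phi_increase_Eq} produces an infinite index set $K$ on which $G_1^k$, $G_2^k$, and $G_3^k$ are bounded. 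Invoking strict primal--dual feasibility (\cref{as:PD}), pick an \eqref{eq:dual_cqp_Eq}-feasible $(\hat\bx,\hat\bspi,\hat\bseta,\hat\bszeta)$ with $\hat\bspi>\bzero$; then $\hat\bs:=\bA\hat\bx-\bb>\bzero$ and $\bC\hat\bx=\bd$.

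Combining the identities $\bA(\hat\bx-\bx^k)=\hat\bs-\bs^k+\bz^k$, $\bC(\hat\bx-\bx^k)=\by^k-\bt_+^k$, and $\bC(\hat\bx-\bx^k)=\bt_-^k-\by^k$ with dual feasibility $\bA^T\hat\bspi+\bC^T(\hat\bseta-\hat\bszeta)=\bH\hat\bx+\bc$ and the boundedness of $G_2^k$, $G_3^k$ on $K$, and then adding $\varphi_k\bone^T(\bz^k+\by^k)$ to both sides while absorbing the $G_1^k$- and $G_2^k$-bounded terms, one obtains, in strict analogy with the key inequality of \cref{lem:z->0onK}, that, for some constant $C$,
\begin{equation*}
(\hat\bx-\bx^k)^T\bH(\hat\bx-\bx^k)+\hat\bspi^T\bs^k+\hat\bseta^T\bt_+^k+\hat\bszeta^T\bt_-^k+(\varphi_k\bone-\hat\bspi)^T\bz^k+(\varphi_k\bone-\hat\bseta-\hat\bszeta)^T\by^k\leq C
\end{equation*}
on $K$. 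For $k$ large, every term on the left is nonnegative (the last two via $\varphi_k\to\infty$), so each is individually bounded on $K$.

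Claim (i) follows from boundedness of the last two terms together with $\varphi_k\to\infty$. For claim (ii): strict positivity $\hat\bspi>\bzero$ together with boundedness of $\hat\bspi^T\bs^k$ on $K$ gives $\bs^k$ bounded, hence $\bA\bx^k=\bb+\bs^k-\bz^k$ bounded on $K$; since $\bt_+^k+\bt_-^k=2\by^k\to\bzero$ on $K$ with both summands nonnegative, $\bt_+^k,\bt_-^k\to\bzero$ on $K$, whence $\bC\bx^k=\bd+\bt_+^k-\by^k\to\bd$ on $K$; boundedness of $(\hat\bx-\bx^k)^T\bH(\hat\bx-\bx^k)$ on $K$ together with $\bH\succeq\bzero$ yields $\bH\bx^k$ bounded on $K$; and full column rank of $[\bH;\bA;\bC]$ (\cref{as:LI}) then gives $\{\bx^k\}$ bounded on $K$. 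The main obstacle is that strict dual feasibility of \eqref{eq:primal_cqp_Eq}--\eqref{eq:dual_cqp_Eq} does not imply $\hat\bseta,\hat\bszeta>\bzero$, so the inner-product bounds on $\hat\bseta^T\bt_+^k$ and $\hat\bszeta^T\bt_-^k$ alone do not control $\bt_+^k,\bt_-^k$; the redundancy $\bt_+^k+\bt_-^k=2\by^k$ together with claim (i) supplies the missing control.
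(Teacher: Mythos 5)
Your proof follows the same route as the paper's: the same index set $K$ supplied by \cref{reqP:phi_increase_Eq} after invoking \cref{lem:x_z_bounded_Eq}, the same key inequality \eqref{eq:equal-XX} obtained by the same combination of the primal identities, dual feasibility, and the $G_1^k$--$G_3^k$ bounds, and the same term-by-term nonnegativity argument yielding claim (i) and boundedness of $\bH\bx^k$ and $\bs^k$ on $K$. The only divergence is in the last step, which the paper leaves implicit (``essentially the same analysis \dots applies here''): your use of $\bt_+^k+\bt_-^k=2\by^k$ to control $\bC\bx^k$ without needing $\hat\bseta,\hat\bszeta>\bzero$ is a legitimate and slightly more careful way to close that gap (in fact primal feasibility already gives $-\by^k\leq\bC\bx^k-\bd\leq\by^k$, so boundedness of $\{\by^k\}$ alone suffices, without appealing to claim (i)).
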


\begin{proof}
The proof of~\cref{lem:z->0onK} is adapted as follows,
with \cref{reqP:phi_increase_Eq} now being invoked
instead of \cref{reqP:phi_increase}.
Let $(\hat\bx,\hat\bspi,\hat\bseta,\hat\bszeta)$ be
\eqref{eq:primal_cqp_Eq}--\eqref{eq:dual_cqp_Eq}--feasible,
$\hat\bs:=\bA\hat\bx-\bb(\geq\bzero)$, and 
$\hat\bt_+:=\hat\bt_-:=\bzero$.  
Then~\eqref{eq:P-D} becomes
a set of three equations:
\begin{subequations}\label{eq:z->0onK_Eq_1}
\begin{align}
\bA(\hat\bx-\bx^k)-\bz^k -(\hat\bs-\bs^k)&=\bzero \quad\forall k\\
\bC(\hat\bx-\bx^k)-\by^k - (\hat\bt_+-\bt_+^k)&=\bzero \quad\forall k\\
-\bC(\hat\bx-\bx^k)-\by^k - (\hat\bt_--\bt_-^k)&=\bzero \quad\forall k\:.
\end{align}
\end{subequations}
Dual feasibility combined with \eqref{eq:z->0onK_Eq_1} now yields
(replacing~\eqref{eq:P-D2}) boundedness on $K$ of
\begin{equation}
(\hat\bx-\bx^k)^T(
\bA^T(\hat\bspi- \dvik) + \bC^T((\hat\bseta-\bseta^k)-(\hat\bszeta-\bszeta^k)))
- (\hat\bx-\bx^k)^T \bH (\hat\bx-\bx^k),
\end{equation}
and it now follows that there exists $D>0$ such 
that, for all $k\in K$,
\begin{equation}
\label{eq:equal-XX}
(\hat\bx-\bx^k)^T \bH (\hat\bx-\bx^k) + \hat\bspi^T\bs^k + \hat\bseta^T\bt_+^k
+ \hat\bszeta^T\bt_-^k
+ (\varphi_k\bone-\hat\bspi)^T\bz^k
+ (\varphi_k\bone-(\hat\bseta+\hat\bszeta))^T\by^k
\leq D\:,
\end{equation}
replacing~\eqref{eq:XX}.
The proof concludes essentially like that of \cref{lem:z->0onK}.

The details that lead to~\eqref{eq:equal-XX} are as follows.
Equation \eqref{eq:XXX} becomes
\begin{equation}
\begin{alignedat}{2}
&(\hat\bx-\bx^k)^T \bH (\hat\bx-\bx^k)
= (\hat\bx-\bx^k)^T (\bA^T (\hat\bspi- \dvik) 
                     + \bC^T((\hat\bseta-\bseta^k)-(\hat\bszeta-\bszeta^k)))
+\beta_k\\
=&(\hat\bs-\bs^k + {\bz^k})^T (\hat\bspi- \dvik)
+(\hat\bt_+-\bt_+^k+\by^k)(\hat\bseta-\bseta^k)
+(\hat\bt_--\bt_-^k+\by^k)(\hat\bszeta-\bszeta^k)
+\beta_k \:,
\end{alignedat}
\end{equation}
where $\beta_k$ is bounded on $K$ and the second equality follows 
from \eqref{eq:z->0onK_Eq_1}.
Upon adding $\varphi_k\bone^T[\bz^k;\by^k]$ to both sides and
reorganizing, we get (since $\hat\bt_+=\hat\bt_-=\bzero$)
\begin{equation}
\begin{alignedat}{2}
&(\hat\bx-\bx^k)^T \bH (\hat\bx-\bx^k) + \hat\bspi^T\bs^k + \hat\bseta^T\bt_+^k
+ \hat\bszeta^T\bt_-^k\\
&\qquad\qquad\qquad+ (\varphi_k\bone-\hat\bspi)^T\bz^k
+ (\varphi_k\bone-(\hat\bseta+\hat\bszeta))^T\by^k\\
&=\hat\bs^T\hat\bspi - \hat\bs^T\bspi^k
+ (\bs^k)^T\bspi^k + (\bt_+^k)^T\bseta^k + (\bt_-^k)^T\bszeta^k \\
&\qquad\qquad\qquad+ (\varphi_k\bone-\bspi^k)^T\bz^k
+ (\varphi_k\bone-(\bseta^k+\bszeta^k))^T\by^k
+\beta_k \:,
\end{alignedat}
\end{equation}
and essentially the same analysis as is done in the proof of~\cref{lem:z->0onK}
applies here, concluding the proof.
\end{proof}

In the remainder of this section, the difference $\bseta-\bszeta$ plays
a key role, so we define $\bsomega:=\bseta-\bszeta$,
and similarly, $\bsomega^k:=\bseta^k-\bszeta^k$, etc.

\begin{lemma}
\label{lem:bddDag_Eq}
Suppose $\varphi_k\to\infty$ as $k\to\infty$ and let $K$ be as in
\cref{lem:z->0onK_Eq}, so that 
$(\bz^k,\by^k)\to(\bz^*,\by^*)=(\bzero,\bzero)$ on $K$, 
$\{\bx^k\}$ is bounded on $K$, and
$K$ has the properties guaranteed by \cref{reqP:phi_increase_Eq}.
Then, given any limit point $\bx^*$ of $\{\bx^k\}$ on $K$, 
there exist $\overline\bspi^*\in\bbR_+^m$ (if $m>0$)
and $\overline\bsomega^*\in\bbR^p$ (if $p>0$), 
with $(\overline\bspi^*,\overline\bsomega^*)\neq\bzero$,
such that 
\begin{equation}
\bA^T\overline\bspi^* + \bC^T\overline\bsomega^*=\bzero,
\quad \bS^*\overline\bspi^*=\bzero,
\end{equation}
where $\bs^*:=\bA\bx^*+\bz^*-\bb=\bA\bx^*-\bb$.
\end{lemma}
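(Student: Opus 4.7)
The plan is to mirror the proof of~\cref{lem:bddDag}, but now with the three dual pieces $\bspi^k$, $\bseta^k$, $\bszeta^k$ tracked separately, combining the last two via $\bsomega^k:=\bseta^k-\bszeta^k$. As in~\cref{lem:bddDag}, the idea is to rescale the multipliers by $\varphi_k$, extract a convergent subsequence of $K$, and pass to the limit in the bounds supplied by~\cref{reqP:phi_increase_Eq}.

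Concretely, I would define $\overline\bspi^k:=\bspi^k/\varphi_k$, $\overline\bsxi^k:=\bsxi^k/\varphi_k$, $\overline\bseta^k:=\bseta^k/\varphi_k$, $\overline\bszeta^k:=\bszeta^k/\varphi_k$, and $\overline\bsomega^k:=\overline\bseta^k-\overline\bszeta^k$. Boundedness of $G_2^k$ on $K$ combined with $\varphi_k\to\infty$ yields $\overline\bspi^k+\overline\bsxi^k\to\bone$ and $\overline\bseta^k+\overline\bszeta^k\to\bone$ on $K$; together with the nonnegativity of each piece (guaranteed by the Master Algorithm), this shows that all four normalized sequences are bounded on $K$. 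Boundedness of $\sfrac{\varphi_k}{\max\{1,\|[\bspi^k;\bsomega^k]\|\}}$ on $K$ then forces $\|[\overline\bspi^k;\overline\bsomega^k]\|$ to stay bounded away from zero. Passing to a further subsequence of $K$, still denoted $K$, the tuple $(\overline\bspi^k,\overline\bsxi^k,\overline\bseta^k,\overline\bszeta^k)$ converges, and its $(\overline\bspi^*,\overline\bsomega^*)$-component is nonzero with $\overline\bspi^*\geq\bzero$.

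To obtain the two claimed identities, I would divide the first block of $G_2^k$ by $\varphi_k$ and pass to the limit on $K$; since $\{\bx^k\}$ is bounded on $K$ by~\cref{lem:z->0onK_Eq}, $\bH\bx^k/\varphi_k\to\bzero$ and $\bc/\varphi_k\to\bzero$, which yields $\bA^T\overline\bspi^*+\bC^T\overline\bsomega^*=\bzero$. Similarly, $\bS^k\overline\bspi^k=\bS^k\bspi^k/\varphi_k\to\bzero$ (using boundedness of $G_1^k$), and $\bs^k=\bA\bx^k+\bz^k-\bb\to\bs^*$ on the further subsequence (using $\bz^k\to\bzero$ from~\cref{lem:z->0onK_Eq} and $\bx^k\to\bx^*$), so $\bS^*\overline\bspi^*=\bzero$.

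The only genuinely new point compared with~\cref{lem:bddDag} is establishing $(\overline\bspi^*,\overline\bsomega^*)\neq\bzero$, and that is precisely what the quotient bound in~\cref{reqP:phi_increase_Eq} was crafted to supply: it is stated in terms of $\|[\bspi^k;\bsomega^k]\|$ rather than $\|\bspi^k\|$ alone, which is what prevents a trivial $\overline\bsomega^*$ when $p>0$. The remainder is a mechanical transcription of the inequality-only argument through the extra normalized multiplier.
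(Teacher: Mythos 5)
Your proposal is correct and follows essentially the same route as the paper's proof: normalize the multipliers by $\varphi_k$, use boundedness of $G_2^k$ together with nonnegativity to bound the normalized sequences, use the quotient bound in \cref{reqP:phi_increase_Eq} to keep $(\overline\bspi^k,\overline\bsomega^k)$ away from zero, and pass to the limit in $G_1^k/\varphi_k$ and $G_2^k/\varphi_k$ to obtain the two identities. No gaps.
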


\begin{proof}
First, we have from \cref{reqP:phi_increase_Eq} ($G_2^k$) 
that $\varphi_k\bone-(\bspi^k+\bsxi^k)$ and
$\varphi_k\bone-(\bseta^k+\bszeta^k)$ are bounded on $K$, which
implies that $\overline\bspi^k+\overline\bsxi^k\to\bone$ on $K$
and $\overline\bseta^k+\overline\bszeta^k\to\bone$ on $K$, where, again
$\overline\bspi^k:=\bspi^k/\varphi_k$, $\overline\bsxi^k:=\bsxi^k/\varphi_k$,
$\overline\bseta^k:=\bseta^k/\varphi_k$, and $\overline\bszeta^k:=\bszeta^k/\varphi_k$.
Additionally, in view of \cref{reqP:phi_increase_Eq}, 
$(\overline\bspi^k,\overline\bsomega^k:=(\bseta^k-\bszeta^k)/\varphi_k)$ is bounded away from $\bzero$
on $K$, and
since $\overline\bspi^k$, $\overline\bsxi^k$, $\overline\bseta^k$,
and $\overline\bszeta^k$ all have nonnegative components, they
are all bounded on $K$, and so is 
$\overline\bsomega^k$.
Hence all have limit points on $K$, and
every limit points $(\overline\bspi^*,\overline\bsomega^*)$ of
$(\overline\bspi^k,\overline\bsomega^k)$ satisfies 
$\overline\bspi^*\geq\bzero$ and
$(\overline\bspi^*,\overline\bsomega^*)\neq\bzero$.
Finally, since $(\bz^*,\by^*)=(\bzero,\bzero)$ and
$\{\bx^{k}\}$ is bounded on $K$ (\cref{lem:z->0onK_Eq} (ii)), 
boundedness of $G_1^k$ and $G_2^k$ in \cref{reqP:phi_increase_Eq}
yields, by dividing through by $\varphi_k$,
\begin{equation}
\bA^T\overline\bspi^*+\bC^T\overline\bsomega^*=\bzero,\quad  \bS^*\overline\bspi^*=\bzero \:.
\end{equation}
\end{proof}

\begin{theorem}
\label{thm:varphi_bounded_Eq}
(i) $\varphi_k$ is eventually constant and (ii)
$(\bz^k,\by^k)\to\bzero$ as $k\to\infty$ and
$\bx^k$ converges to the set of solutions of~\eqref{eq:primal_cqp_Eq}.
\end{theorem}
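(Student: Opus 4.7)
The approach mirrors the proof of \cref{thm:varphi_bounded}, but uses the dual certificate from \cref{lem:bddDag_Eq} together with the full-row-rank property of $\bC$ from \cref{as:LI} to rule out the runaway case. Claim~(ii) will then follow immediately from \cref{prop:constantPhi_Eq}.

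For claim~(i) I would argue by contradiction: suppose $\varphi_k\to\infty$. By \cref{lem:z->0onK_Eq} there is an infinite index set $K$ on which $(\bz^k,\by^k)\to\bzero$ and $\{\bx^k\}$ is bounded; let $\bx^*$ be any limit point on $K$. Because the iterates are primal-feasible for \eqref{eq:primal_cqp_phi_rho}, they satisfy $|\bC\bx^k-\bd|\leq\by^k$, so $\by^k\to\bzero$ on $K$ yields $\bC\bx^*=\bd$. By \cref{lem:bddDag_Eq}, there exist $\overline\bspi^*\in\bbR_+^m$ and $\overline\bsomega^*\in\bbR^p$, with $(\overline\bspi^*,\overline\bsomega^*)\neq\bzero$, such that
\begin{equation*}
\bA^T\overline\bspi^* + \bC^T\overline\bsomega^* = \bzero, \qquad \bS^*\overline\bspi^* = \bzero, \qquad \bs^*:=\bA\bx^*-\bb.
\end{equation*}

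Next, let $\bA_{\rm act}$, $\bb_{\rm act}$, and $\overline\bspi_{\rm act}^*$ denote the restrictions of $\bA$, $\bb$, and $\overline\bspi^*$ to the active indices $\{i:s_i^*=0\}$. Complementarity forces $\overline\bspi^*$ to vanish on the inactive indices, giving $\bA_{\rm act}^T\overline\bspi_{\rm act}^* + \bC^T\overline\bsomega^* = \bzero$. Invoking strict primal feasibility (\cref{as:PD}), pick $\hat\bx$ with $\bA\hat\bx>\bb$ and $\bC\hat\bx=\bd$, and set $\bv:=\hat\bx-\bx^*$; then $\bA_{\rm act}\bv>\bzero$ and $\bC\bv=\bzero$. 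Left-multiplying the above identity by $\bv^T$ gives $(\bA_{\rm act}\bv)^T\overline\bspi_{\rm act}^* = 0$, which together with $\bA_{\rm act}\bv>\bzero$ and $\overline\bspi_{\rm act}^*\geq\bzero$ forces $\overline\bspi_{\rm act}^*=\bzero$, hence $\overline\bspi^*=\bzero$ throughout. Then $(\overline\bspi^*,\overline\bsomega^*)\neq\bzero$ implies $\overline\bsomega^*\neq\bzero$, whereas $\bC^T\overline\bsomega^*=\bzero$ combined with the full-row-rank property of $\bC$ (\cref{as:LI}) forces $\overline\bsomega^*=\bzero$, a contradiction. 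This proves~(i); claim~(ii) now follows from \cref{prop:constantPhi_Eq}.

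The main obstacle is that, unlike \cref{lem:bddDag} (which delivered a nonzero $\overline\bspi^*$ directly), \cref{lem:bddDag_Eq} only guarantees that the joint certificate $(\overline\bspi^*,\overline\bsomega^*)$ is nonzero, so one must separately exclude the possibility that the inequality-multiplier component vanishes while the equality-multiplier component is nonzero. The full row rank of $\bC$ from \cref{as:LI} is precisely what is needed to close this gap; without it, $\bC^T\overline\bsomega^*=\bzero$ could hold with $\overline\bsomega^*\neq\bzero$, and the argument would break down.
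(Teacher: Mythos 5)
Your proof is correct and follows essentially the same route as the paper's: contradiction via \cref{lem:z->0onK_Eq} and \cref{lem:bddDag_Eq}, elimination of $\overline\bspi_{\rm act}^*$ using a strictly feasible point, and then the full-row-rank of $\bC$ to kill $\overline\bsomega^*$. Your write-up merely makes explicit a few steps the paper leaves implicit (e.g., that $\by^k\to\bzero$ on $K$ yields $\bC\bx^*=\bd$, and the final dichotomy on $(\overline\bspi^*,\overline\bsomega^*)\neq\bzero$).
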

\begin{proof}
Following the proof of~\cref{thm:varphi_bounded},
invoking~\cref{lem:bddDag_Eq}, we note that for 
some $\overline\bspi^*\geq\bzero$
(if $m>0$) and $\overline\bsomega^*$ (if $p>0$), 
with $(\overline\bspi^*,\overline\bsomega^*)\neq\bzero$,
we have
\begin{equation}
\label{eq:ap+cw_and_sPi}
\bA^T\overline\bspi^* + \bC^T\overline\bsomega^*=\bzero,
\quad\text{with}\quad
\overline\pi_i^*=0 \quad\forall i\in\{i\colon s_i^*>0\}\:,
\end{equation}
where $\bs^*:=\bA\bx^*-\bb$.
Next, let $\bA_{\rm act}$ be the submatrix of $\bA$ associated with 
active constraints at $\bx^*$;
i.e., the rows of $\bA_{\rm act}$ are all those rows of $\bA$ with
index $i$ such that $s_i^*=0$.
Then
$\bA_{\rm act}\bx^* = \bb_{\rm act}$, $\bC\bx^*=\bd$, 
and~\eqref{eq:ap+cw_and_sPi} imply that
\begin{equation}
\label{eq:Api+CTw=0}
\bA_{\rm act}^T\overline\bspi_{\rm act}^* + \bC^T\overline\bsomega^*=\bzero.
\end{equation}
Now, invoking \cref{as:PD}, let $\hat\bx$ be strictly 
feasible for \eqref{eq:primal_cqp}, i.e., 
$\bC\hat\bx=\bd$ and $\bA\hat\bx>\bb$,
in particular, $\bA_{\rm act}\hat\bx>\bb_{\rm act}$.  
Proceeding as in the proof of \cref{thm:varphi_bounded}, we conclude
that $\overline\bspi_{\rm act}^*=\bzero$ and, from~\eqref{eq:Api+CTw=0},
that $\bC^T\overline\bsomega^*=\bzero$,
a contradiction since $\bC$ has full row rank (\cref{as:LI}), proving
the first claim.  The second claim follows from~\cref{prop:constantPhi_Eq}.
\end{proof}

\section{Certificate of Infeasibility}
\label{sec:infeasibility}
The assumption (part of \cref{as:PD}) that \eqref{eq:primal_cqp} has a (strictly)
feasible point generally cannot be ascertained at the outset,
and in case of infeasibility it is desirable that the sequences 
generated by the algorithm provide, preferably early on, a {\em certificate
of infeasibility}.  
In this section it is shown that the
proposed framework does provide such certificate and that, in
addition, it provides an initial feasible point for a nearby
feasible problem.

Thus, in this section, \cref{as:PD} is replaced with the following
less restrictive assumption (primal feasibility is not assumed),
involving auxiliary problem 
\begin{equation}
\label{eq:primal_cqp_relaxed}
\minimize_{\bx\in\bbR^n} \:
\bff(\bx) \mbox{~s.t.~} \:
\bA\bx \geq \bb',\: \bC\bx\geq\bd'_-,\: \bC\bx\leq\bd'_+\:,
\tag{P$'$}
\end{equation}
a feasible relaxation of the infeasible \eqref{eq:primal_cqp}, 
with some $\bb'\leq\bb$, $\bd'_-\leq\bd$ and $\bd'_+\geq\bd$ 
selected in such a way that
\eqref{eq:primal_cqp_relaxed} is indeed feasible.

\renewcommand\theassumptionPrime{1$'$}

\begin{assumptionPrime}
\label{as:shifted_bounded_solution_set}
\eqref{eq:primal_cqp_relaxed} has a (nonempty) bounded optimal solution set.
\end{assumptionPrime}
Note that \cref{as:shifted_bounded_solution_set} implies feasibility 
of the dual of \eqref{eq:primal_cqp_relaxed}, which is equivalent 
to feasibility of~\eqref{eq:dual_cqp}.
Lemmas~\ref{lem:phi>lambda_Eq} 
and~\ref{lem:x_z_bounded_Eq}, invoked in the analysis below,
were established without using the primal-feasibility nor 
strict-dual-feasibility portions of
\cref{as:PD}, so that the less 
restrictive \cref{as:shifted_bounded_solution_set} is sufficient there.
The following additional assumption is also invoked.
\begin{assumption}\footnote{%
Numerical experimentation, including tests with small-size problems
that do not satisfy \cref{as:bounded_feasible_set},
suggests that mere boundedness of the {\em optimal} solution
set of \eqref{eq:primal_cqp_relaxed}, as implied by \cref{as:shifted_bounded_solution_set}, is sufficient for
the results to hold.  A proof of this is elusive at this time though.
In any case, boundedness of the feasible set of course can be achieved
by imposing appropriately large bounds to the components of $\bx$.
}
\label{as:bounded_feasible_set}
\eqref{eq:primal_cqp_relaxed} has a bounded feasible set.
\end{assumption}

The notation used below is as in section~\ref{sec:eq-constrained}.
In particular, $\bsomega:=\bseta-\bszeta$, $\bsomega^k:=\bseta^k-\bszeta^k$,
etc.
It is well known (Farkas's Lemma; 
e.g.~\cite[Proposition 6.4.3(iii)]{MatusekGartner07}) that a system of the
form $\bA\bx\geq\bb,~\bC\bx=\bd$ is infeasible, i.e., has no solution,
if and only if there exists $(\bspi,\bsomega)$ such that
\begin{equation}
\label{eq:infeasible_condition}
\bspi\geq\bzero, \quad \bA^T\bspi + \bC^T\bsomega=\bzero,
\quad \bb^T\bspi+\bd^T\bsomega>0\:.
\end{equation}
Now, consider the reparameterization/rescaling of \eqref{eq:primal_cqp_phi_rho} obtained by
defining $\alpha:=1/\varphi$ and scaling the objective function
by $\alpha$, viz. 
\begin{equation}
\label{eq:alpha-penalization}
\minimize_{\bx,\bz,\by}~\alpha \bff(\bx) + \bone^T[\bz;\by]
{\rm ~s.t.~} \bA\bx+\bz\geq\bb, \bC\bx+\by\geq\bd, -\bC\bx+\by\geq-\bd, 
\bz\geq\bzero
\tag{$\tilde{\textup{P}}_\alpha$}
\end{equation}
with $\bx\in\bbR^n$, $\bz\in\bbR^{m}$, $\by\in\bbR^{p}$.
The limit problem (with $\alpha=0$) is
\begin{equation}
\label{primalOfLimitProblem}
\minimize_{\bx,\bz,\by}~ \bone^T[\bz;\by] \quad{\rm~s.t.~} ~\bA\bx+\bz\geq\bb, 
~\bC\bx+\by\geq\bd, ~-\bC\bx+\by\geq-\bd,
~\bz\geq\bzero\:,
\tag{$\tilde{\textup{P}}_0$}
\end{equation}
with dual
\begin{equation}
\label{dualOfLimitProblem}
\maximize_{\bspi\in\bbR^{m},\bsomega\in\bbR^{p}}~\bb^T\bspi+\bd^T\bsomega                
\quad{\rm ~s.t.~} ~\bA^T\bspi+\bC^T\bsomega=\bzero,
~\bsomega\in[-1,1],
~\bspi\in[0,1]\:,
\tag{$\tilde{\textup{D}}_0$}
\end{equation}
and optimality conditions given by 
\begin{equation}
\begin{alignedat}{2}
&\bA^T\bspi+\bC^T(\bseta-\bszeta)=\bzero, 
~\bspi+\bsxi=\bone, ~\bseta+\bszeta=\bone,\\
~\bS\bspi=\bzero, ~&\bZ\bsxi=\bzero, ~\bT_+\bseta=\bzero, ~\bT_-\bszeta=\bzero,
~(\bs,\bt_+,\bt_-,\bz,\bspi,\bsxi,\bseta,\bszeta)\geq\bzero.
\end{alignedat}
\end{equation}

The analysis proceeds as follows.
\begin{lemma}\label{lem:infeasible_penalty}
If \eqref{eq:primal_cqp} is infeasible, then $\alpha_k\to 0$ as $k\to\infty$.
\end{lemma}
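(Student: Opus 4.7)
The plan is to proceed by contradiction: assume $\alpha_k$ does not tend to $0$, equivalently that $\varphi_k$ does not diverge. \cref{reqP:varphi_Eq} then leaves only the possibility that $\varphi_k$ is eventually constant at some $\hat\varphi>0$. The goal is to exhibit a limit point of $\{\bx^k\}$ that is feasible for \eqref{eq:primal_cqp}, which contradicts the infeasibility hypothesis.

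The skeleton of the argument mirrors the proof of \cref{prop:constantPhi_Eq}, adapted to the weaker assumptions (\cref{as:shifted_bounded_solution_set} and \cref{as:bounded_feasible_set}) in place of \cref{as:PD}. First, the contrapositive of \cref{reqP:boundedz_Eq} yields boundedness of $\{(\bz^k,\by^k)\}$. Second, the monotone decrease of $\bff(\bx^k)+\hat\varphi\bone^T[\bz^k;\by^k]$ (\cref{rem:monotone}) together with an adaptation of the third claim of \cref{lem:Pvarphi_Eq} gives boundedness of $\{\bx^k\}$. Third, \cref{reqB:cvgce} applied to the base iteration being run on \eqref{eq:primal_cqp_phi_rho} at $\varphi=\hat\varphi$ yields $\|G_1^k\|\to 0$ and $\|G_2^k\|\to 0$, and boundedness of $\{\bx^k\}$ then gives $|G_3^k|\to 0$. \cref{reqP:large_enough_penalty_Eq} now delivers $\hat\varphi>\liminf\|[\bspi^k;\bseta^k-\bszeta^k]\|_\infty$.

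Finally, the plan is to extract a subsequence along which this strict inequality is preserved and along which all primal and dual iterates converge (the dual sequences are bounded because $\bspi^k+\bsxi^k\to\hat\varphi\bone$, $\bseta^k+\bszeta^k\to\hat\varphi\bone$, and the Master Algorithm keeps $(\bspi^k,\bsxi^k,\bseta^k,\bszeta^k)\geq\bzero$). The limit is a KKT point for \eqref{eq:primal_cqp_phi_rho}--\eqref{eq:dual_cqp_phi_rho} at $\varphi=\hat\varphi$; since $\hat\varphi>\|[\bspi^*;\bseta^*-\bszeta^*]\|_\infty$, \cref{lem:phi>lambda_Eq} forces $\bz^*=\by^*=\bzero$, so $\bx^*$ is feasible for \eqref{eq:primal_cqp}, the desired contradiction.

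The principal obstacle will be the adaptation of the third claim of \cref{lem:Pvarphi_Eq}: its original proof relies on both primal feasibility and boundedness of the optimal solution set of \eqref{eq:primal_cqp}, neither of which is available here. The workaround is that any recession direction $\bv$ of the $[\bz;\by]$-truncated sublevel set must satisfy $\bA\bv\geq\bzero$ and $\bC\bv=\bzero$, hence is a recession direction of the feasible set of \eqref{eq:primal_cqp_relaxed}; \cref{as:bounded_feasible_set} then forces $\bv=\bzero$.
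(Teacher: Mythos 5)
Your proposal is correct and follows essentially the same route as the paper's own proof: contradiction via \cref{reqP:varphi_Eq} to get $\varphi_k$ eventually constant, boundedness of $\{(\bz^k,\by^k)\}$ from \cref{reqP:boundedz_Eq}, boundedness of $\{\bx^k\}$ from the level-set claim of \cref{lem:Pvarphi_Eq}, then \cref{reqB:cvgce} and \cref{reqP:large_enough_penalty_Eq} to obtain $\hat\varphi>\liminf\|[\bspi^k;\bseta^k-\bszeta^k]\|_\infty$, and finally \cref{lem:phi>lambda_Eq} to produce a solution of \eqref{eq:primal_cqp}, contradicting infeasibility. The only (harmless) divergence is in justifying the level-set boundedness without \cref{as:PD}: the paper leans on \cref{as:shifted_bounded_solution_set} (implicitly rerunning the recession-direction argument of \cref{lem:Pvarphi} against \eqref{eq:primal_cqp_relaxed} instead of \eqref{eq:primal_cqp}), whereas you invoke the stronger \cref{as:bounded_feasible_set}; both are in force in this section, so either works.
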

\begin{proof}
By contradiction.  If $\alpha_k$ does not tend to 0, then it is
eventually constant, say, equal to $\hat\alpha>0$. 
Thus, \cref{reqP:boundedz_Eq} implies that $\{(\bz^k,\by^k)\}$ is bounded.
Since ($\tilde{\textup{P}}_{\hat\alpha})$
is strictly feasible, \cref{as:shifted_bounded_solution_set} and \cref{lem:Pvarphi_Eq} imply 
boundedness of its constrained level sets,
hence boundedness of $\{\bx^k\}$, and \cref{reqB:cvgce,reqP:large_enough_penalty_Eq}
imply convergence to the
set of optimal solution and 
$\hat\varphi:=1/\hat\alpha>\liminf\|[\bspi^*;\bsomega^*]\|_\infty$.
\cref{lem:phi>lambda_Eq} 
then implies that $(\bx^*,\bspi^*,\bseta^*,\bszeta^*)$ 
solves \eqref{eq:primal_cqp}--\eqref{eq:dual_cqp}, in
contradiction with \eqref{eq:primal_cqp} being infeasible.
\end{proof}

\begin{lemma}
\label{lem:infeas}
If \eqref{eq:primal_cqp} is infeasible, then
there exists an infinite index set $K$ such that,
as $k\to\infty$, $k\in K$,
$(\overline\bspi^k,\overline\bsomega^k)$,
with $\overline\bspi^k:=\alpha_k\bspi^k$ and 
$\overline\bsomega^k:=\alpha_k\bsomega^k$,
tends to the solution set of
\eqref{dualOfLimitProblem}
and $(\bz^k,\by^k)$ tends to the solution set 
of \eqref{primalOfLimitProblem}.
\end{lemma}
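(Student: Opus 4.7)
The plan is to build on \cref{lem:infeasible_penalty}, which gives $\alpha_k\to 0$ (equivalently $\varphi_k\to\infty$), and on the rescaled dual variables $\overline\bspi^k:=\alpha_k\bspi^k$, $\overline\bsxi^k:=\alpha_k\bsxi^k$, $\overline\bseta^k:=\alpha_k\bseta^k$, $\overline\bszeta^k:=\alpha_k\bszeta^k$, with the goal of showing that subsequential limits of $(\overline\bspi^k,\overline\bsomega^k,\bz^k,\by^k)$ along a suitable index set $K$ form a primal--dual optimal pair for \eqref{primalOfLimitProblem}--\eqref{dualOfLimitProblem}. The proof of \cref{lem:x_z_bounded_Eq} (through \cref{lem:technical_Eq}) relies only on dual feasibility of \eqref{eq:dual_cqp}, which under \cref{as:shifted_bounded_solution_set} holds since the dual of \eqref{eq:primal_cqp_relaxed} shares \eqref{eq:dual_cqp}'s feasible set; hence $\{(\bz^k,\by^k)\}$ remains bounded. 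Boundedness of $\{\bx^k\}$ then follows from \cref{as:bounded_feasible_set}: the polyhedra $\{\bx:\bA\bx\geq\bb-\bz^k,\;\bd-\by^k\leq\bC\bx\leq\bd+\by^k\}$ share the (trivial) recession cone of the feasible set of \eqref{eq:primal_cqp_relaxed}, and since their right-hand sides are uniformly bounded, $\{\bx^k\}$ lies in a fixed compact set.

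I would next invoke \cref{reqP:phi_increase_Eq}, applicable since $\varphi_k\to\infty$ while $\{(\bz^k,\by^k)\}$ is bounded, to obtain an infinite index set $K$ on which $G_1^k$ and $G_2^k$ are bounded. Multiplying the identities defining $G_1^k$ and $G_2^k$ by $\alpha_k$ and using $\alpha_k\to 0$ together with boundedness of $\{\bx^k\}$, one obtains on $K$ that $\overline\bspi^k+\overline\bsxi^k\to\bone$, $\overline\bseta^k+\overline\bszeta^k\to\bone$, $\bA^T\overline\bspi^k+\bC^T\overline\bsomega^k\to\bzero$, and $\bS^k\overline\bspi^k,\bZ^k\overline\bsxi^k,\bT_+^k\overline\bseta^k,\bT_-^k\overline\bszeta^k\to\bzero$. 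Nonnegativity of the rescaled multipliers together with the pairwise sums tending to $\bone$ makes them bounded; extracting a convergent subsequence $K'\subseteq K$, the corresponding limits $\overline\bspi^*,\overline\bsxi^*,\overline\bseta^*,\overline\bszeta^*,\bx^*,\bz^*,\by^*$ satisfy primal feasibility for \eqref{primalOfLimitProblem}, dual feasibility for \eqref{dualOfLimitProblem} (in particular $\overline\bspi^*\in[\bzero,\bone]$ and $\overline\bsomega^*\in[-\bone,\bone]$), and the complementary slackness relations $\bS^*\overline\bspi^*=\bZ^*\overline\bsxi^*=\bT_+^*\overline\bseta^*=\bT_-^*\overline\bszeta^*=\bzero$.

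Optimality would then follow from a direct duality-gap calculation: left-multiplying $\bA^T\overline\bspi^*+\bC^T\overline\bsomega^*=\bzero$ by $(\bx^*)^T$, substituting the slack identities $\bs^*=\bA\bx^*+\bz^*-\bb$, $\bt_+^*=\bC\bx^*+\by^*-\bd$, $\bt_-^*=-\bC\bx^*+\by^*+\bd$, and applying complementary slackness together with $\overline\bspi^*+\overline\bsxi^*=\bone$ and $\overline\bseta^*+\overline\bszeta^*=\bone$, yields $\bb^T\overline\bspi^*+\bd^T\overline\bsomega^*=\bone^T[\bz^*;\by^*]$. Hence $(\overline\bspi^*,\overline\bsomega^*)$ is optimal for \eqref{dualOfLimitProblem} and $(\bx^*,\bz^*,\by^*)$ is optimal for \eqref{primalOfLimitProblem}. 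Since every subsequence on $K$ admits a further convergent subsequence whose limits are optimal in this sense, the distances from $(\overline\bspi^k,\overline\bsomega^k)$ and from $(\bz^k,\by^k)$ to the respective solution sets tend to zero on $K$, establishing the claim. The main obstacle is the boundedness of $\{\bx^k\}$ in the infeasible regime, where the primal-feasibility-anchored argument of \cref{lem:z->0onK_Eq} is unavailable; this is precisely what \cref{as:bounded_feasible_set} is introduced to handle.
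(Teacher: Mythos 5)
Your argument is correct and follows essentially the same route as the paper's proof: bound $\{(\bz^k,\by^k)\}$ via \cref{lem:x_z_bounded_Eq} (valid under \cref{as:shifted_bounded_solution_set} since only dual feasibility of \eqref{eq:dual_cqp} is used), bound $\{\bx^k\}$ via \cref{as:bounded_feasible_set}, invoke \cref{reqP:phi_increase_Eq} to get $K$, multiply $G_1^k,G_2^k$ by $\alpha_k\to0$, and pass to limit points satisfying the optimality conditions of \eqref{primalOfLimitProblem}--\eqref{dualOfLimitProblem}. You merely spell out two steps the paper leaves terse (the recession-cone argument for boundedness of $\{\bx^k\}$ and the zero-duality-gap verification), and you take $K$ directly from \cref{reqP:phi_increase_Eq} rather than as the set of indices where $\alpha_k$ decreases, which is an immaterial difference.
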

\begin{proof}
Let $K:=\{k:\alpha_{k+1}<\alpha_k\}$, an infinite index set 
in view of \cref{lem:infeasible_penalty}.
Also, let $\overline\bsxi^k:=\alpha_k\bsxi^k$, 
$\overline\bseta^k:=\alpha_k\bseta^k$, and 
$\overline\bszeta^k:=\alpha_k\bszeta^k$.
In view of \cref{lem:x_z_bounded_Eq} 
(boundedness of $\{(\bz^k,\by^k)\}$) and \cref{reqP:phi_increase_Eq},
since $\alpha_k\to0$, we have, together with 
$(\overline\bspi^k,
\overline\bsxi^k,\overline\bseta^k,\overline\bszeta^k)\geq\bzero$,
\begin{equation}
\begin{alignedat}{2}
&\bS^k\overline\bspi^k\to\bzero,
~~\bZ^k\overline\bsxi^k\to\bzero,
~~\bT_+^k\overline\bseta^k\to\bzero, 
~~\bT_-^k\overline\bszeta^k\to\bzero,\\
\alpha_k(\bH\bx^k+\bc)&-\bA^T\overline\bspi^k-\bC^T\overline\bsomega^k\to\bzero, 
~~\overline\bspi^k+\overline\bsxi^k-\bone\to\bzero, 
~~\overline\bseta^k+\overline\bszeta^k-\bone\to\bzero, 
\end{alignedat}
\end{equation}
as $k\to\infty$, $k\in K$.
In particular, $(\overline\bspi^k,
\overline\bsxi^k,\overline\bseta^k,\overline\bszeta^k)$ is bounded on $K$.
In view of \cref{as:bounded_feasible_set}, $\{\bx^k\}$ is also bounded, 
and it follows that for any limit point 
$(\hat\bx,\hat\bz,\hat\by,\hat\bspi,\hat\bsxi,\hat\bseta,\hat\bszeta)$
of $(\bx^k,\bz^k,\by^k,
\overline\bspi^k,\overline\bsxi^k,\overline\bseta^k,\overline\bszeta^k)$ 
on $K$,
\begin{equation}
\hat\bS\hat\bspi=\bzero,~\hat\bT_+\hat\bseta=\bzero,~\hat\bT_-\hat\bszeta=\bzero,
~\hat\bZ\hat\bsxi=\bzero,~\bA^T\hat\bspi+\bC^T\hat\bsomega=\bzero,
~\hat\bspi+\hat\bsxi=\bone,~\hat\bseta+\hat\bszeta=\bone,
\end{equation}
implying the claim.
\end{proof}

\begin{lemma}
If \eqref{eq:primal_cqp} is infeasible, every solution 
$(\bspi^*,\bsomega^*)$
of \eqref{dualOfLimitProblem} 
satisfies $\bspi^*\geq\bzero$, $\bA^T\bspi^*+\bC^T\bsomega^*=\bzero$, 
and $\bb^T\bspi^*+\bd^T\bsomega^*>0$.
\end{lemma}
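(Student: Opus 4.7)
The plan is to observe that the first two conditions are simply the linear feasibility constraints of~\eqref{dualOfLimitProblem}, so only the strict inequality $\bb^T\bspi^*+\bd^T\bsomega^*>0$ requires work. The idea is to show that the optimal value of~\eqref{dualOfLimitProblem} is strictly positive whenever \eqref{eq:primal_cqp} is infeasible; since every optimal solution attains this value, the claim will follow at once.

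First I would invoke Farkas's lemma (equation~\eqref{eq:infeasible_condition}) to produce a pair $(\tilde\bspi,\tilde\bsomega)$ with $\tilde\bspi\geq\bzero$, $\bA^T\tilde\bspi+\bC^T\tilde\bsomega=\bzero$, and $\bb^T\tilde\bspi+\bd^T\tilde\bsomega>0$. Next, since the first two of these conditions are preserved under positive scaling and the third one guarantees $(\tilde\bspi,\tilde\bsomega)\neq\bzero$ (so that $t:=\max\{\|\tilde\bspi\|_\infty,\|\tilde\bsomega\|_\infty\}>0$), the rescaled pair $(\tilde\bspi/t,\tilde\bsomega/t)$ belongs to $[0,1]^m\times[-1,1]^p$ and is thus feasible for~\eqref{dualOfLimitProblem}. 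Its objective value $(\bb^T\tilde\bspi+\bd^T\tilde\bsomega)/t$ is still strictly positive.

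It follows that the optimal value of~\eqref{dualOfLimitProblem} is strictly positive, so every solution $(\bspi^*,\bsomega^*)$ satisfies $\bb^T\bspi^*+\bd^T\bsomega^*>0$, while the two remaining conditions hold by feasibility. There is no real obstacle here: the only minor subtlety is checking that the Farkas witness is nonzero, which is immediate from the strict inequality, and that the rescaling keeps the objective positive (it simply divides by a positive scalar).
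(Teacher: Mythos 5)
Your proposal is correct, but it takes a genuinely different route from the paper. The paper's proof is a one-line appeal to strong duality: since \eqref{eq:primal_cqp} is infeasible, no feasible point of \eqref{primalOfLimitProblem} can have $\bz=\bzero$ and $\by=\bzero$, so the (attained) optimal value of \eqref{primalOfLimitProblem} is strictly positive, and strong LP duality transfers that positive value to \eqref{dualOfLimitProblem}. You instead start from the Farkas certificate \eqref{eq:infeasible_condition}, rescale it by $t:=\max\{\|\tilde\bspi\|_\infty,\|\tilde\bsomega\|_\infty\}>0$ so that it lands in the box constraints of \eqref{dualOfLimitProblem}, and conclude that the maximum of \eqref{dualOfLimitProblem} is at least the (positive) value at this feasible point. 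Your argument is slightly more self-contained: it needs only the existence of a feasible dual point with positive objective (a triviality for a maximization problem) rather than strong duality, and it sidesteps having to argue that the infimum of \eqref{primalOfLimitProblem} is attained (which is where the paper implicitly leans on standard LP facts). The paper's version, on the other hand, is shorter and ties the lemma directly to the primal problem \eqref{primalOfLimitProblem}, whose solutions are also invoked in Theorem~\ref{thm:infeasible_certificate}. Both proofs are valid; your checks that the witness is nonzero and that rescaling preserves feasibility and positivity are exactly the right details to verify.
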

\begin{proof}
Immediate consequence of strong duality, since the dual of \eqref{primalOfLimitProblem} is
\eqref{dualOfLimitProblem}.
\end{proof}
Together, these three lemmas establish the following. 
\begin{theorem}\label{thm:infeasible_certificate}
If \eqref{eq:primal_cqp} is infeasible, then given $\epsilon>0$, there exists ${\hat{k}}$ such 
that
\begin{equation}
\|\bA^T\overline\bspi^{\hat{k}} + \bC^T\overline\bsomega^{\hat{k}}\|\leq\epsilon, 
\quad \bb^T\overline\bspi^{\hat{k}} + \bd^T\overline\bsomega^{\hat{k}}>0,
\end{equation}
and 
\begin{equation}
\bone^T[\bz^{\hat{k}};\by^{\hat{k}}] \leq \bone^T[\hat\bz;\hat\by] + \epsilon,
\end{equation}
where $\overline\bspi^{\hat{k}}\geq\bzero$ and $(\hat\bz,\hat\by)$ solves \eqref{primalOfLimitProblem}.
\end{theorem}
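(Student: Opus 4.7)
The plan is to assemble the theorem directly from the three preceding lemmas, with only a compactness/continuity argument needed to promote the set-convergence statements of \cref{lem:infeas} into the stated quantitative bounds. To begin, invoke \cref{lem:infeas} to obtain an infinite index set $K$ along which $(\overline\bspi^k,\overline\bsomega^k)$ tends to the solution set of \eqref{dualOfLimitProblem} and $(\bz^k,\by^k)$ tends to the solution set of \eqref{primalOfLimitProblem}. Nonnegativity $\overline\bspi^{\hat k}\geq\bzero$ is then immediate: $\overline\bspi^k=\alpha_k\bspi^k$ with $\alpha_k>0$, and the Master Algorithm produces $\bspi^k\geq\bzero$ at every iteration (as a component of the required $\bblambda^k\geq\bzero$).

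To establish the first two bulleted conditions, I would note that the feasible set of \eqref{dualOfLimitProblem} is a compact polytope (contained in $[0,1]^m\times[-1,1]^p$), so its nonempty solution set is itself compact. By the preceding Farkas-type lemma, every solution $(\bspi^*,\bsomega^*)$ of \eqref{dualOfLimitProblem} satisfies $\bA^T\bspi^*+\bC^T\bsomega^*=\bzero$ and $\bb^T\bspi^*+\bd^T\bsomega^*>0$; compactness then guarantees that the continuous, strictly positive function $(\bspi,\bsomega)\mapsto\bb^T\bspi+\bd^T\bsomega$ attains a positive minimum $\delta>0$ on this solution set. Combining continuity of the linear maps $(\bspi,\bsomega)\mapsto\bA^T\bspi+\bC^T\bsomega$ and $(\bspi,\bsomega)\mapsto\bb^T\bspi+\bd^T\bsomega$ with the set-convergence along $K$, for all $k\in K$ sufficiently large one has both $\|\bA^T\overline\bspi^k+\bC^T\overline\bsomega^k\|\leq\epsilon$ and $\bb^T\overline\bspi^k+\bd^T\overline\bsomega^k\geq\delta/2>0$.

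For the third condition, I would use that $\bone^T[\hat\bz;\hat\by]$ is, by definition, the optimal value of \eqref{primalOfLimitProblem}. Since $(\bz^k,\by^k)$ tends to the solution set of \eqref{primalOfLimitProblem} along $K$ by \cref{lem:infeas}, continuity of the linear objective gives $\bone^T[\bz^k;\by^k]\to\bone^T[\hat\bz;\hat\by]$ on $K$, so for $k\in K$ large enough this quantity lies within $\epsilon$ of the optimum. Taking $\hat k\in K$ large enough that all three conditions hold simultaneously completes the argument. The proof is essentially a direct assembly of the three lemmas; the only subtlety, namely promoting the pointwise strict positivity of $\bb^T\bspi^*+\bd^T\bsomega^*$ on the solution set to a \emph{uniform} lower bound along the approximating sequence, is handled at once by compactness of the dual feasible set.
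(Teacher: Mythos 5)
Your proposal is correct and follows exactly the route the paper intends: the paper offers no written proof beyond the remark ``Together, these three lemmas establish the following,'' and your argument is precisely that assembly, with the routine compactness/continuity details filled in to convert the set-convergence of \cref{lem:infeas} into the stated $\epsilon$-bounds. (One minor simplification: $\bb^T\bspi+\bd^T\bsomega$ is the objective of \eqref{dualOfLimitProblem} and hence is \emph{constant} on its solution set, equal to the positive optimal value, so the uniform lower bound $\delta>0$ is immediate without appealing to compactness of the solution set.)
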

Hence, if \eqref{eq:primal_cqp} is infeasible, the Master Algorithm provides a
certificate of (approximate) infeasibility, 
as well as an $\epsilon$-$\ell_1$-least relaxation of the constraints,
replacing
$\bb$ with $\bb':=\bb-\bz^{\hat{k}}$, and ``spreading'' $\bC\bx=\bd$ to 
$-\Delta\bd_- \leq \bC\bx-\bd \leq \Delta\bd_+$, with
\[
(\Delta\bd_+)_i:=
\begin{cases}
(\by^{\hat{k}})_i\:,& \text{if } (\bC\bx^{\hat{k}} - \bd)_i > 0\\
0\:, & \text{otherwise}
\end{cases}\:,
\quad 
(\Delta\bd_-)_i:=
\begin{cases}
(\by^{\hat{k}})_i\:,& \text{if } (\bC\bx^{\hat{k}} - \bd)_i < 0\\
0\:, & \text{otherwise}
\end{cases} \:,
\]
that makes $\bx^{\hat{k}}$ feasible for the relaxed 
problem.\footnote{Alternatively, $\bx^{\hat{k}}$ 
is also feasible for
the adjusted (rather than relaxed) problem obtained by still replacing 
$\bb$ with $\bb'$ but then including instead the {\em equality} constraints
$\bC\bx=\bd'$, with $\bd'=\bC\bx^{\hat{k}}$.}

\section{Implementation and Numerical Experiments}
\label{sec:opt_num_results}

\subsection{A Penalty-Parameter Updating Rule}
\label{subsec:penalty update}

The following updating rule for the penalty parameter was used in
our experiments; here $\sigma_1>0$, $\sigma_2>1$, and $\gamma_0>0$,
$\gamma_1>0$, $\gamma_2>0$, $\gamma_3>0$ are prescribed, but $\gamma_1$
through $\gamma_3$ can be freely reduced with every
increase of $\varphi_k$.

\noindent
{\bf Penalty-parameter updating rule}
\begin{enumerate}
\item Set $\varphi^+:=\varphi$.
\item If $\|(\bz,\by)\|>\gamma_0\varphi$,
set $\varphi^+:=\frac{\sigma_2}{\gamma_0}\|(\bz,\by)\|$.
\item If $\varphi^+\leq\|[\dvi;\bseta-\bszeta]\|_\infty + \sigma_1$,
$\|G_1\|\leq\gamma_1$, $\|G_2\|\leq\gamma_2$, and $|G_3|\leq\gamma_3$,\\
then set $\varphi^+:=\sigma_2(\|[\dvi;{\bseta-\bszeta}]\|_\infty+\sigma_1)$.
\end{enumerate}

We now show that this proposed penalty-parameter updating rule satisfies 
\crefrange{reqP:varphi_Eq}{reqP:phi_increase_Eq}.

\noindent
\ref{reqP:varphi_Eq}:
Clear, since $\varphi_0>0$ in the
Master Algorithm, and $\sigma_1>0$ and $\sigma_2>1$ here. \\
\ref{reqP:boundedz_Eq}:
Step~2 above implies that 
$\varphi_{k+1}\geq\frac{1}{\gamma_0}\|(\bz^k,\by^k)\|$ for all $k$, proving the claim. \\
\ref{reqP:large_enough_penalty_Eq}:
Suppose $\varphi_k=\hat\varphi$ for all $k>\hat k$.
Then, in view of step~2 above, 
it must be the case that 
$\{(\bz^k,\by^k)\}$ is bounded.  
Further, since
$G_1^k$, $G_2^k$, and $G_3^k$ all tend to zero
(see \cref{reqP:large_enough_penalty_Eq}),
step~3 above implies that 
$\hat\varphi>\|[\dvik;\bseta^k-\bszeta^k]\|_\infty+\sigma_1$
for $k>\hat k$, so the requirement is satisfied. \\
\ref{reqP:phi_increase_Eq}:
Suppose $\varphi_k\to\infty$ and $\{(\bz^k,\by^k)\}$ is bounded,
so the condition in step~2 above
cannot hold more than finitely many times.  
Then, since $\varphi_k \to\infty$,
the conditions in step~3 must be satisfied on an infinite index 
set $K$, implying that $G_1^k$, $G_2^k$ and $G_3^k$
are all bounded on $K$, and
$\varphi_k\leq\|[\dvik;\bseta^k-\bszeta^k]\|_\infty + \sigma_1$ on $K$
so that $\|[\dvik;\bseta^k-\bszeta^k]\|_\infty\to\infty$ on $K$.
Thus $\frac{\varphi_k}{\|[\dvik;\bseta^k-\bszeta^k]\|_\infty}\leq 1 
+ \frac{\sigma_1}{\|[\dvik;\bseta^k-\bszeta^k]\|_\infty}$ is bounded on $K$,
proving the claim.

\subsection{Implementation Details}
\label{subsec:implementation}
All numerical tests were run with a Matlab implementation of the Master Algorithm (section~\ref{ProposedAlgorithmStructure}), base iteration (CR-MPC proposed in \cite{LaiuTits19}), and penalty-parameter update (section~\ref{subsec:penalty update}) on a machine with AMD Opteron(tm) CPU Processor 6376 (2.3GHz) and Matlab~R2019a in Linux platform. 

\paragraph{Stopping criterion}
In the implementation,
the stopping criterion for the Master Algorithm was 
$\texttt{Err}\leq\texttt{tol}$
with the normalized error term%
\footnote{%
Approximate primal feasibility 
$(\bs,\bt_+,\bt_-)\geq\bzero$ (or $\approx\bzero$) is implicitly 
taken into account in the last three terms in the numerator.}
\begin{equation}
\texttt{Err}:=\frac{\left\|\left[\bH\bx+\bc-\bA^T\bspi-\bC^T(\bseta-\bszeta);\:\min\{\bs,\bspi\};
	\:\min\{[\bt_+;\bt_-],[\bseta;\bszeta]\}\right]\right\|_2}
{\max\{\|\bH\|_\infty,\,\|\bc\|_\infty\:,\|{\bA}\|_\infty,\,\|{\bC}\|_\infty\}}\:,
\label{eq:Exlambda}
\end{equation}
where $\bs:=\bA\bx-\bb$, $\bt_+:=\bC\bx-\bd$, 
$\bt_-:=-(\bC\bx-\bd)$, and $\min\{\cdot,\cdot\}$ denotes 
component-wise minimum.
When equality constraints are not present, $\texttt{Err}$ is reduced by setting 
$\min\{[\bt_+;\bt_-],[\bseta;\bszeta]\} = \bzero$,
and $\bC = \bzero$.

\paragraph{Initialization}
The Master Algorithm requires that 
$(\bx^0,\bz^0,\by^0)$ be feasible 
for the augmented primal problem \eqref{eq:primal_cqp_phi_rho},
while the CR-MPC base iteration requires 
primal-strictly-feasible initial points, i.e.,
$\bz^0> -\min\{\bA\bx^0-\bb,\bzero\}$
and 
$\by^0> \text{abs}(\bC\bx^0-\bd)$, with
$\text{abs}(\cdot)$ the component-wise absolute value.
In our tests, given a (problem dependent) $\bx^0$,
we chose $\bz^0= c_{\bz} \bone$%
\footnote{A possibility would 
be to freeze $\bz$ at zero when $\bx^0$ is primal feasible and indeed,
when a component $z_i^k$ of $\bz^k$ reaches zero at some iteration $k$,
freeze that component to zero thereafter.  This was not done in the 
tests reported here.}
and $\by^0= c_{\by}\bone$ with $c_{\bz} = -\min\{\min\{\bA\bx^0-\bb\},0\}+1$ and $c_{\by} = \max\{\text{abs}(\bC\bx^0-\bd)\}+1$.
For the initial dual variable and penalty parameter, we used $\dv^0=\bone$ and $\varphi_0=1$.

\paragraph{Base iteration}
The base iteration used in the tests is that of
Algorithm~CR-MPC proposed in \cite{LaiuTits19},
with the stopping criterion turned off, and with implementation
details (including parameter values) essentially identical to those laid 
out in section~3.2 of that paper.  A notable exception is that, here, 
in connection with relaxation variables $(\bz,\by)$, the constraints are 
structurally sparse, and this was specifically attended to in the solution
of the Newton-KKT systems; thus, the associated CPU cost was only slightly 
higher than if there were only $n$, rather than $n+m+p$ variables.
A few constraint selection rules were considered in \cite{LaiuTits19} for Algorithm~CR-MPC.
Here we used {\rulename} with the same parameter values as 
in \cite{LaiuTits19} but with two minor modifications: (i) we keep the slack 
threshold $\delta_k$ equal to its initial value $\bar{\delta}$ in the first five iteration,
and (ii) we always include the sparse constraints $\bz\geq\bzero$, $\bC\bx+\by\geq\bd$, and $\bC\bx-\by\leq\bd$ in the selected constraint set.
In the numerical tests, (i) improved the robustness of {\rulename} to the choice 
of $(\bx^0,\bz^0,\by^0)$, while (ii) led to faster convergence with little additional cost per iteration.

\paragraph{Penalty-parameter update}
We implemented the penalty-parameter update in Master Algorithm following the 
rule given in section~\ref{subsec:penalty update}, 
with parameter values $\sigma_1 = 1$, $\sigma_2=10$,
$\gamma_0:=\frac{\|[\bz^0;\by^0]\|_\infty}{\varphi_0}$ and, for $i=1,2,3$,
$\gamma_i:=\|G_i^0\|_2$.
We chose $\|(\bz,\by)\|:=\|[\bz;\by]\|_\infty$ in step 2, and $\|G_i\|=\|G_i\|_2$ for $i=1,2$, in step 3.
Importantly, at every increase of $\varphi$, 
the internal base iteration variables
(denoted $\texttt{Var}_{\texttt{BI}}$ in the Master
Algorithm in section~\ref{ProposedAlgorithmStructure})
were
reset to the initial values specified in~\cite{LaiuTits19}, since a new
optimization problem (different objective function) is then dealt with.

\paragraph{Detection of infeasibility}
As discussed in section~\ref{sec:infeasibility}, the proposed framework 
provides an infeasible certificate whenever \eqref{eq:primal_cqp} is infeasible.
Stopping criterion~\eqref{eq:Exlambda} was thus augmented with an
alternative criterion (see \eqref{eq:infeasible_condition}) which is
declared satisfied when a ``certificate'' $(\hat{\bspi}^k, \hat{\bsomega}^k)$
is produced such that
\begin{equation}
\label{eq:infeasible_stop}
\bb^T\hat{\bspi}^k + \bd^T\hat\bsomega^k> \sqrt{{\upepsilon}_{\texttt{m}}} \quand 
\frac{\|[\bA^T\hat{\bspi}+\bC^T\hat{\bsomega};\:\min\{\hat{\bspi},\bzero\}]\|_2}
{\max\{\|{\bA}\|_\infty,\,\|{\bC}\|_\infty\}}\leq\texttt{tol}_{\texttt{infeas}}\:,
\end{equation} 
where ${\upepsilon}_{\texttt{m}}$ is the machine precision 
and $\texttt{tol}_{\texttt{infeas}}$ a tolerance parameter,
in which case \eqref{eq:primal_cqp} is declared to be infeasible.

\cref{thm:infeasible_certificate} suggests that $(\hat{\bspi}^k, \hat{\bsomega}^k)$ could be chosen as $(\sfrac{\bspi^k}{\varphi_k}; \sfrac{(\bseta^k-\bszeta^k)}{\varphi_k})$ with $({\bspi^k}, {\bseta^k}, {\bszeta^k})$ dual variables given by the base iteration.
However, we found that for some infeasible problems, this choice requires many iterations to satisfy \eqref{eq:infeasible_stop}. 
In our implementation, 
we constructed $(\hat{\bspi}^k, \hat{\bsomega}^k)$ by setting
\begin{equation}\label{eq:infeasible_dual}
[\hat{\bspi}_{Q}^k; \hat{\bsomega}^k]:=[\,[\bp_{\bspi_{Q}}]_+;\bp_{\bsomega}]\:,\, [\bp_{\bspi_{Q}};\bp_{\bsomega}]:=\text{proj}_{\cN([\bA_{Q}^T, \bC^T])}
\big([\sfrac{\bspi_{Q}^k}{\varphi_k}; \sfrac{(\bseta^k-\bszeta^k)}{\varphi_k}]\big)\:,
\end{equation}
and $\hat{\bspi}_{Q^c} = \bzero$, where $Q$ and $Q^c$ denote the reduced constraint index set and its complement, both given by the CR-MPC base iteration, $\bspi_Q$ and $\bA_Q$ denote the subvector and submatrix of $\bspi$ and $\bA$ associated to the index set $Q$, respectively (see, e.g., \cite{LaiuTits19}, for details).
When such $Q$ is not available, $Q:=\{1,\dots,m\}$ and 
$Q^c=\emptyset$ is appropriate (but not as efficient).
In \eqref{eq:infeasible_dual}, $[\cdot]_+:=\max\{\cdot,\bzero\}$ and $\text{proj}_{\cN([\bA_{Q}^T, \bC^T])}$ denotes the orthogonal
projection operator onto the null space of $[\bA_{Q}^T, \bC^T]$.
We note that, with this choice of $(\hat{\bspi}^k, \hat{\bsomega}^k)$
substituted for $(\overline\bspi^k,\overline\bsomega^k)$, 
\cref{lem:infeas} still holds, so that, on an infeasible problem, \eqref{eq:infeasible_stop}
will eventually be satisfied, and an infeasibility certificate will be 
produced.  In addition, it is intuitively clear, and was verified in our
numerical tests, that this choice results in a much smaller number of 
necessary iterations for~\eqref{eq:infeasible_stop} to be satisfied.
Furthermore, the computational cost of running the infeasibility test
is negligible in comparison with the overall cost of an iteration.
Finally, we set $\texttt{tol}_{\texttt{infeas}}=10^{-6}$ in all numerical tests.

\subsection{Randomly Generated Problems}
\label{subsec:random_problems}

We first tested the Master Algorithm with the CR-MPC base iteration 
on imbalanced ($m\gg n$) randomly generated problems both with and 
without equality constraints.
We considered problems of the form~\eqref{eq:primal_cqp} with 
sizes $m:=10\,000$, $n$ ranging from $10$ to $200$, 
and $p={n}/{2}$ or 0.
We solved two sub-classes 
of problems: (i) strongly convex---$\bH$ diagonal and 
positive definite, with random diagonal entries from uniform 
distribution $\cU(0,1)$---and (ii) linear---$\bH=\bzero$.
For each sub-class, $20$ randomly generated problems were solved for each problem size, 
and the results averaged over the $20$ problems were reported.
Consistent results were observed with $\bH\not=\bzero$ 
but det($\bH$)=0.
The entries of $\bA$, $\bC$, and $\bc$ were taken from 
a standard normal distribution $\cN(0,1)$; as for $\bb$ and $\bd$, see
sections~\ref{subsubsec:feasible} and \ref{subsubsec:infeasible}.

\subsubsection{Comparison on Feasible Problems}
\label{subsubsec:feasible}
To guarantee strict feasibility (\cref{as:PD}), we generated 
$\bx^{\textup{feas}}$ and $\bs^{\textup{feas}}$ with i.i.d.~entries
taken from $\cN(0,1)$ and uniform distribution 
$\cU(1,2)$, respectively, 
and then set $\bb:=\bA\bx^{\textup{feas}}-\bs^{\textup{feas}}$ and 
$\bd:=\bC\bx^{\textup{feas}}$.
{\em For feasible-start algorithms considered in the comparison, the 
starting point was $\bx^0 := \bx^{\textup{feas}}$}, while for the proposed 
infeasible-start (IS) 
framework, a starting point $\bx^0$ was generated by repeatedly taking 
i.i.d.~entries from $\cN(0,1)$ 
	until $\bx^0$ became infeasible.
For scaling purpose, we followed the heuristic proposed in \cite{JOT-12} 
and used the normalized constraints $(\bD_1\bA)\bx\geq \bD_1 \bb$ and 
$(\bD_2\bC)\bx = \bD_2\bd$, where $\bD_1=\diag{(1/\|\ba_i\|_2)}$ and 
$\bD_2=\diag{(1/\|\bc_i\|_2)}$ with $\ba_i$ and $\bc_i$ the $i$-th row 
of $\bA$ and $\bC$, respectively.
The modified $\bA$ and $\bC$
matrices were also used in the stopping criteria \eqref{eq:Exlambda} and \eqref{eq:infeasible_stop}.

\cref{fig:Random_Eq} reports the iteration counts and computation time of the tested 
algorithms on the two sub-classes of problems with equality constraints.
Here the proposed IS framework with the
CR-MPC base iteration (with ``Rule R'' for constraint selection),
IS-CR-MPC, is 
compared to the same with constraint reduction turned off (IS-MPC$^*$).%
\footnote{Here we denote the CR-MPC algorithm with constraint reduction turned off as MPC$^*$ (rather than MPC) to avoid confusion with the original MPC algorithm in \cite{Mehrotra-1992}.}
For both cases, the tolerance $\texttt{tol}$ was set to $10^{-8}$.
Also, three widely used solvers, SDPT3~\cite{Toh-Todd-Tutuncu-1999,Tutuncu-Toh-Todd-2003}, SeDuMi~\cite{Sturm-1999}, and MOSEK (ver.~$9.1.9$)~\cite{andersen2000mosek,Andersen2003} are included in the comparison.%
\footnote{We note that these solvers can solve a broader 
class of problems (e.g., second-order cone optimization, semidefinite optimization) 
than the proposed IS framework. We include them here since they
allow a close comparison with our code, as Matlab implementations of 
SDPT3 and SeDuMi are freely available within
the CVX Matlab package~\cite{cvx,Grant-Boyd-2008} 
and MOSEK provides a convenient Matlab API.
We consider these results as benchmarks for the IS framework.}
In all reported tests, the convergence tolerances for these 
three solvers were set to $10^{-8}$ as well.%
\footnote{To avoid biases due to different
stopping criteria, we also experimented with tolerances set to $10^{-6}$ 
(while keeping $\texttt{tol} = 10^{-8}$ for all MPC versions) and
observed results with a couple fewer iterations
and nearly identical computation time.}
As seen in \cref{fig:Random_Eq}, on such imbalanced CQPs, 
in spite of the fact that, in terms of iteration count, 
IS-CR-MPC is inferior to MOSEK and comparable to SeDuMi, 
the total computation time recorded by IS-CR-MPC
is three to nine times lower than that 
recorded by the second fastest solver (MOSEK).

\begin{figure}[ht]
	\centering
	\captionsetup[subfigure]{labelformat=parens}	
	\subfloat[Strongly convex QP: $H\succ\bzero$]{
		\captionsetup[subfigure]{labelformat=empty, position=top}
		\subfloat[Iteration count]{\includegraphics[width=0.30\linewidth]{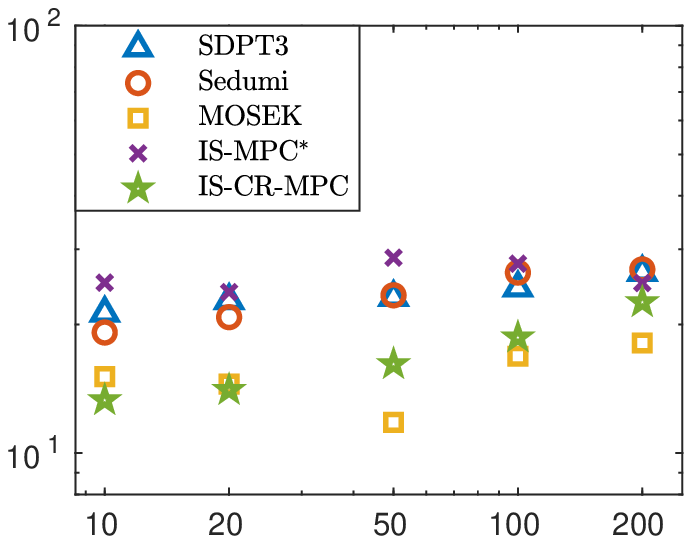}}~~~
		\subfloat[Computation time (sec)]{\includegraphics[width=0.30\linewidth]{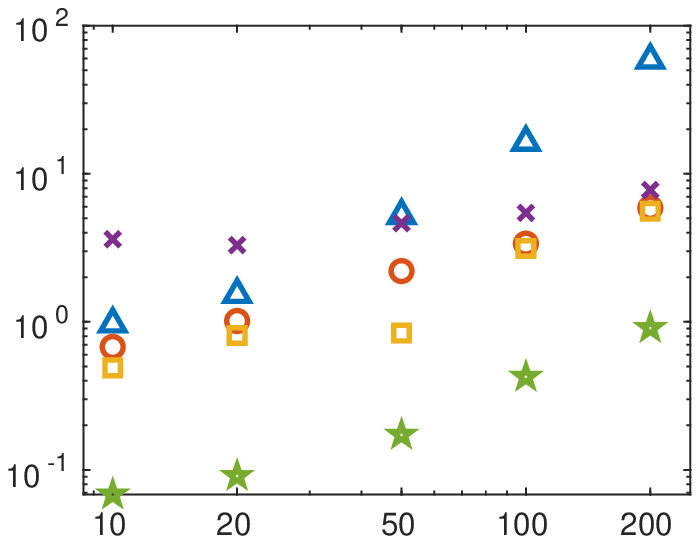}}
		\setcounter{subfigure}{1}
		\label{fig:PD_Eq}}\\	
	\subfloat[linear optimization: $H=\bzero$]{
		\captionsetup[subfigure]{labelformat=empty, position=top}
		\subfloat[Iteration count]{\includegraphics[width=0.30\linewidth]{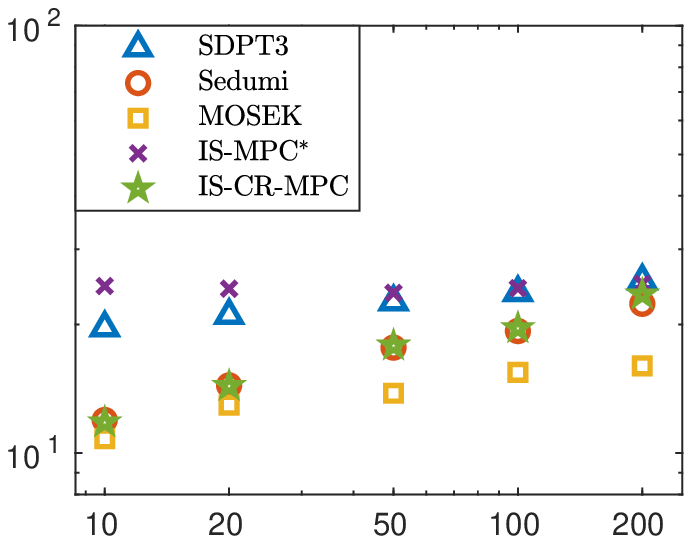}}~~~
		\subfloat[Computation time (sec)]{\includegraphics[width=0.30\linewidth]{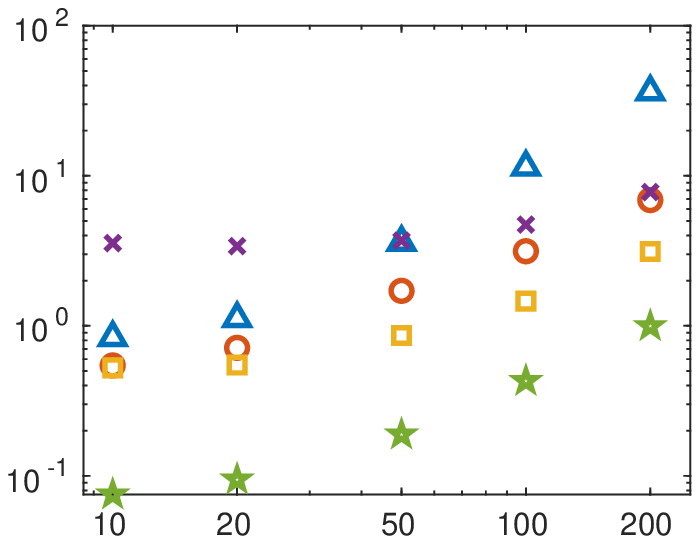}}
		\setcounter{subfigure}{2}
		\label{fig:LP_Eq}}
	\caption{\small Randomly generated problems 
		with $m=10\,000$ inequality constraints and $p=\frac{n}{2}$ equality constraints.
		Results are pictured for two sub-classes of problems. In each figure, the $x$-axis is the 
		number of variables ($n$) and the $y$-axis is the iteration 
		count or total computation time,
		both averaged over the 20 problem instances and plotted in logarithmic scale.}
	\label{fig:Random_Eq}
\end{figure}

\cref{fig:Random} illustrates the results on 
problems with no equality constraints. 
For these tests, we also included the 
feasible-start CR-MPC algorithm of \cite{LaiuTits19} 
and the same with constraint reduction turned off (MPC$^*$)
into the comparison, 
with convergence criterion given in \cite{LaiuTits19} and tolerance $10^{-8}$.
In the linear $(\bH=\bzero)$ case, we included in the 
comparison a revised primal simplex with partial pricing
(see~\cite{BertsimasTsitsiklis97} and references therein) code 
used in \cite{Winternitz-Thesis}%
\footnote{We used an implementation due to Luke Winternitz, 
who kindly made it available to us.}
which takes a two-phase approach: solve an auxiliary problem in phase~1 
to generate a feasible point, then solve the original problem 
from that point in phase~2.

As shown in \cref{fig:Random}, 
the feasible-start MPC$^*$ and CR-MPC solvers
required fewer iterations to 
converge than IS-CR-MPC, 
most likely due to the readily available 
feasible initial point 
(a ``warm-start'' of sorts).
The simplex code required many more iterations than the other 
solvers, but simplex iterations are inexpensive, 
resulting in an average computation time.
On the tested (imbalanced)
problems, the constraint-reduced solvers generally 
outperformed other solvers in terms of computation time. 
The feasible-start CR-MPC algorithm is at most two times faster 
than IS-CR-MPC,
which reflects the difference in iteration counts.
In tests not reported here, we also observed that, when 
starting from the feasible 
$\bx^{\textup{feas}}$, 
IS-CR-MPC and CR-MPC
give nearly identical performance, i.e., the overhead for allowing 
infeasible start is minor.

\begin{figure}[ht]
	\centering
	\captionsetup[subfigure]{labelformat=parens}	
	\subfloat[Strongly convex QP: $H\succ\bzero$]{
		\captionsetup[subfigure]{labelformat=empty, position=top}
		\subfloat[Iteration count]{\includegraphics[width=0.30\linewidth]{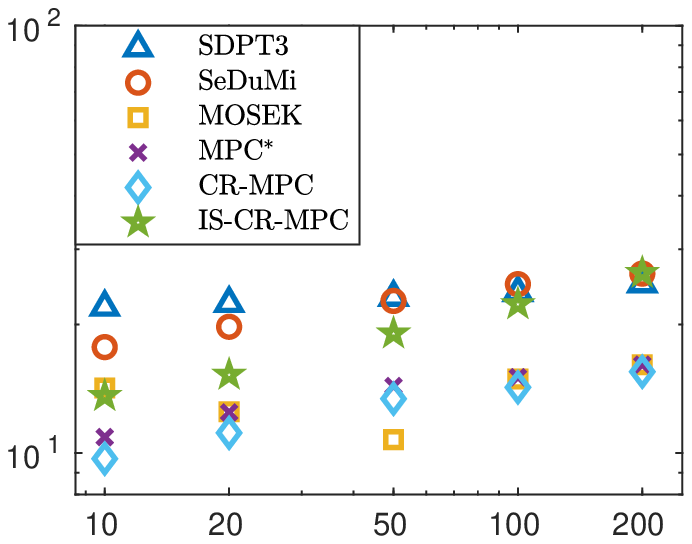}}~~~
		\subfloat[Computation time (sec)]{\includegraphics[width=0.30\linewidth]{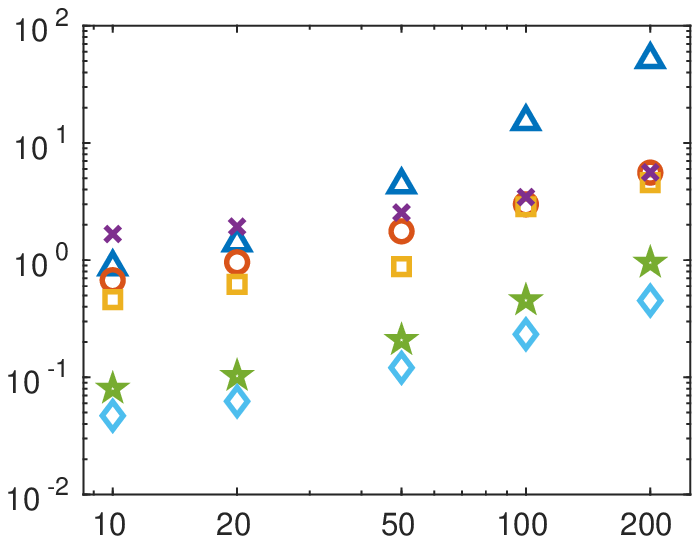}}
		\setcounter{subfigure}{1}
		\label{fig:PD}}\\	
	\subfloat[linear optimization: $H=\bzero$]{
		\captionsetup[subfigure]{labelformat=empty, position=top}
		\subfloat[Iteration count]{\includegraphics[width=0.30\linewidth]{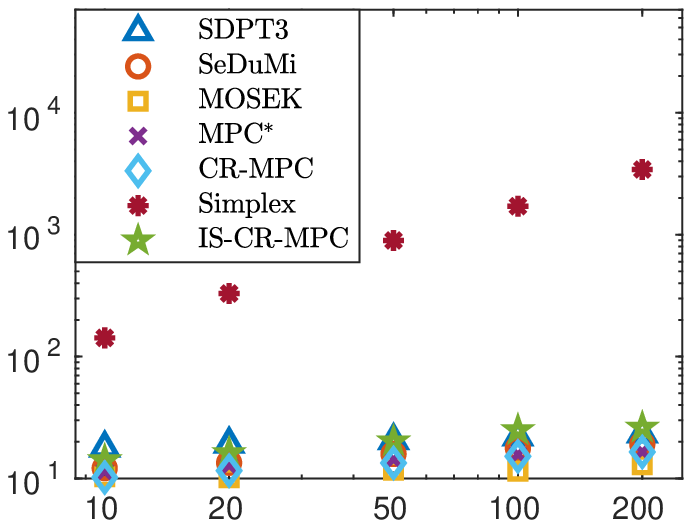}}~~~		\subfloat[Iteration count (zoomed in)]{\includegraphics[width=0.29\linewidth]{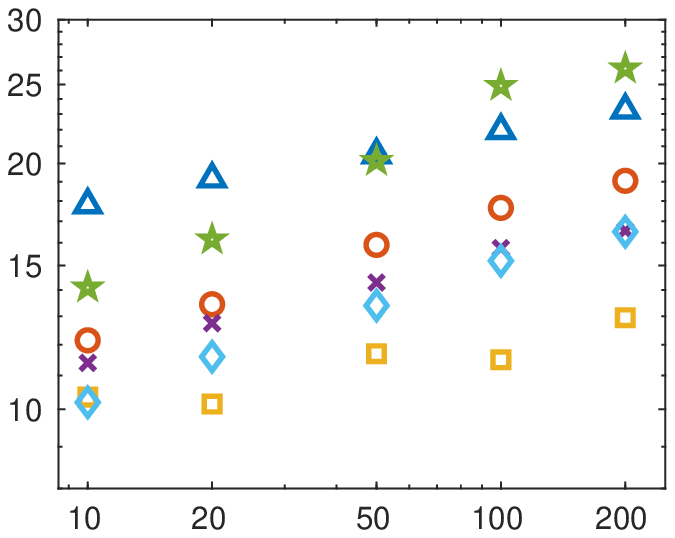}}~~~		
		\subfloat[Computation time (sec)]{\includegraphics[width=0.30\linewidth]{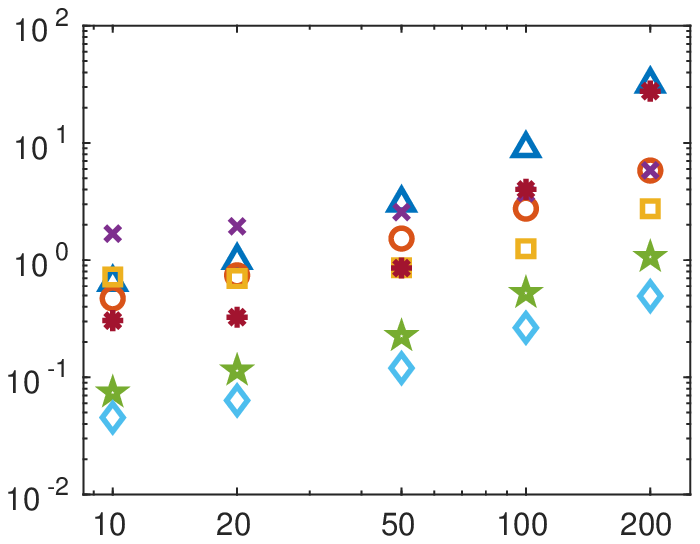}}
		\setcounter{subfigure}{2}
		\label{fig:LP}}
	\caption{\small Randomly generated problems 
		with $m=10\,000$ inequality constraints and no equality constraints.
		Results are pictured for two sub-classes of problems. In each figure, the $x$-axis is the 
		number of variables ($n$) and the $y$-axis is the iteration count or total computation time,		
		both averaged over the 20 problem instances and plotted in logarithmic scale.}
	\label{fig:Random}
\end{figure}

\subsubsection{Infeasibility Detection Tests}
\label{subsubsec:infeasible}
Here, the entries of $\bA,\bb,\bC,\bd$, and $\bc$ were first all generated
from $\cN(0,1)$ (i.i.d.).
To guarantee infeasibility of the problem,  
the last inequality constraint $\ba_m^T\bx\geq b_m$ was then replaced
by $-\ba_i^T\bx\geq -b_i+\delta$, for some index $i$ randomly 
selected from $\{1,\dots,m-1\}$ and $\delta>0$ taken from $\cU(0,1)$.
The starting point $\bx^0$ was generated by taking i.i.d.~entries 
from $\cN(0,1)$.

In \cref{table:Infeasibility_detection}, the averaged iteration counts and 
computation time over the 20 problem instances are reported for IS-CR-MPC.
As seen from the table, with the dual estimates generated 
by \eqref{eq:infeasible_dual}, the 
conditions in \eqref{eq:infeasible_stop} were satisfied 
on all tested problem instances within about 10 iterations on average.
These results suggest that the proposed IS framework is 
capable of providing infeasibility certificates 
efficiently for infeasible problems.
It also is worth noting that no infeasibility certificates were issued
in the tests reported in section~\ref{subsubsec:feasible}.
(i.e., there were no false positives).
\begin{table}[ht]
\small
	\begin{center}
		\begin{tabular}{c|c||rr|rr|rr|rr|rr}
\multicolumn{2}{c||}{$n$}  & \multicolumn{2}{c|}{10} & \multicolumn{2}{c|}{20} &  \multicolumn{2}{c|}{50} &  \multicolumn{2}{c|}{100} & \multicolumn{2}{c}{200}\\
\hline
$p$ & $\bH$     &{Iter.}&{Time}&{Iter.}&{Time}&{Iter.}&{Time}&{Iter.}&{Time}&{Iter.}&{Time}\\
\hline
$\frac{n}{2}$ & $\succ\bzero$   &  7.5 & 0.04 & 11.1 & 0.10 & 10.0 & 0.15 & 10.2 & 0.35 & 10.0 & 1.01 \\
$\frac{n}{2}$ & $=\bzero$	   &  8.5 & 0.05 & 10.4 & 0.08 & 9.9  & 0.15 & 10.1 & 0.38 & 10.1 & 1.08\\
$0$ & $\succ\bzero$   & 10.2 & 0.07 & 11.2 & 0.10 & 10.4 & 0.17 & 10.0 & 0.34 & 10.3 & 0.97 \\
$0$ & $=\bzero$	   & 10.8 & 0.07 & 10.4 & 0.09 & 9.8  & 0.16 & 10.2 & 0.35 & 10.3 & 0.95
		\end{tabular}
	\end{center}
\caption{\small Infeasibility detection results with IS-CR-MPC
on randomly generated
    (infeasible) problems with $m=10\,000$ inequality constraints.
	In each row, the averaged iteration count and computation time (sec) are reported for problems with $n=10,\dots,200$ variables and $\frac{n}{2}$ or $0$ equality constraints,
	in the strongly convex ($\bH\succ\bzero$) or linear ($\bH=\bzero$) sub-classes.}
	\label{table:Infeasibility_detection}
\end{table}

\subsection{Support-Vector Machine Training Problems}
\label{subsec:SVM}
We tested IS-CR-MPC on CQPs 
arising in the training of support-vector machine (SVM) classifiers
for pattern recognition tasks in high dimensions (see, e.g., \cite{JOT-08} 
and references therein for relevant discussions).
In the problems considered here, the training data 
set takes the form $(\bP,\boldsymbol{\ell})$, where 
$\bP\in\bbR^{\bar{m}\times\bar{n}}$, $\boldsymbol{\ell}\in\bbR^{\bar{m}}$
and, for $i=1,\ldots,\bar{m}$, $\bp_i^T$ ($i$-th row of $\bP$) 
denotes a pattern that 
corresponds to a known label $\ell_i\in\{-1,\,1\}$.
The training process of SVMs aims at finding an optimal 
separating hyperplane (when one exists) in the pattern 
space, that separates the ``+'' class patterns (with label 
$\ell_i=1$) from the ``$-$'' class patterns 
(with label $\ell_i=-1$) 
and is equidistant from both classes.
Specifically, the goal is to construct a
hyperplane 
\begin{equation}
\{\bp\in\bbR^{\bar{n}} \colon \vint{\bw,\bp} - \beta = 0 \},
\end{equation}
under
inner product $\vint{\cdot,\cdot}$, such that
the parameters $\bw\in\bbR^{\bar{n}}$ and $\beta\in\bbR$ satisfy
\begin{equation}
\text{sign}\{\vint{\bw,\bp_i} - \beta\} = \ell_i\:,\quad i = 1,\dots,\bar{m}\:,
\end{equation}
while maximizing the separation margin $\frac{2}{\|\bw\|_2}$.
When the Euclidean inner product is selected, this amounts
to solving
\begin{equation}\label{eq:SVM}
\minimize_{\bw\in\bbR^{\bar{n}}, \beta\in \bbR} ~\: \frac{1}{2}\|\bw\|_2^2\quad
\mbox{s.t.} \:  ~   \bL({\bP}{\bw}-\beta\bone)\geq \bone\:,
\end{equation}
where $\bL:=\diag(\boldsymbol{\ell})$.
By denoting $\bx=[\bw;\beta]$, this problem takes the form 
of \eqref{eq:primal_cqp} with $n=\bar{n}+1$ optimization variables and 
$m = \bar{m}$ inequality constraints.
Of course, when the given training data is not separable, \eqref{eq:SVM} is infeasible.
When this is known (e.g., an infeasibility certificate has been
produced by IS-CR-MPC), a constraint-relaxation variable is introduced
that allows misclassification, and the objective function is penalized
accordingly, viz.
\begin{equation}\label{eq:SVM_relaxed}
\minimize_{\bw\in\bbR^{\bar{n}}, \beta\in \bbR, \nu\in \bbR} ~\: \frac{1}{2}\|\bw\|_2^2 \,+ \,\tau \nu
\quad
\mbox{s.t.} \:  ~   \bL({\bP}{\bw}-\beta\bone)+\nu\bone\geq \bone\:,~
\nu\geq0\:,
\end{equation}
where $\tau>0$ is a constant penalty parameter on the relaxation variable $\nu\in\bbR$.
This relaxed problem still takes the form of \eqref{eq:primal_cqp}, with $n=\bar{n}+2$ optimization variables $\bx=[\bw;\beta;\nu]$ and $m = \bar{m}+1$ inequality constraints.%
\footnote{Alternatively, following the suggestion
made at the very end of section~\ref{sec:infeasibility},
an $(n+1)$-variable problem with feasible start could be solved.}

We tested IS-CR-MPC on SVM training 
for four data sets---\texttt{MUSHROOM}, \texttt{ISOLET}, 
\texttt{WAVEFORM}, and \texttt{LETTER}---from the UCI machine 
learning repository \cite{UCI-database}.
As in \cite{Gertz-Griffin-2006}, 
a lifted version of the data, in a higher-dimensional feature space,
with increased likeliness of linear separation was used instead;
see \cite{Gertz-Griffin-2006,JOT-08} for details.
Such mapping results in \texttt{MUSHROOM} and \texttt{ISOLET}
being separable; \texttt{WAVEFORM}, and \texttt{LETTER} are not, and
the relaxed problem \eqref{eq:SVM_relaxed} was solved instead.%
\footnote{A Matlab-formatted version of these data sets was kindly 
made available to us by Jin Jung, first author of~\cite{JOT-08}.}
The numbers of features and patterns for the lifted version of each data set are listed in \cref{table:SVM_size}.
\begin{table}[ht]
	\begin{center}
		\begin{tabular}{ccccc}
			& \texttt{MUSHROOM} &  \texttt{ISOLET} & \texttt{WAVEFORM} & \texttt{LETTER}\\
			\# of features ($\bar{n}$) & 276 & 617 & 	861 & 	153 	\\
			\# of patterns ($\bar{m}$) & 8124 & 7797 & 	5000 & 	20000 	\\
			separable & Yes & Yes & No & No
		\end{tabular}
	\end{center}
	\caption{\small Problem specifications of the four tested data sets for SVM training.}
	\label{table:SVM_size}
\end{table}

The performance of SDPT3, SeDuMi, MOSEK, IS-MPC$^*$, and IS-CR-MPC is
reported in \cref{fig:SVM}, where logarithmic scales are used.
Here the starting point for the IS algorithms was 
$\bx^0 = \bzero$.
The results show that, on these imbalanced CQPs,
IS-CR-MPC enjoys fastest convergence among the tested solvers.
Indeed, compared to the next fastest (MOSEK in all four cases),
the speedups for \texttt{MUSHROOM},\texttt{ISOLET}, \texttt{WAVEFORM},
and \texttt{LETTER} were 1.6x, 4.1x, 1.2x, and 2.3x, respectively.
The lower speed for \texttt{WAVEFORM} is readily explained by the
fact that this data set is the most balanced one among the tested data sets.

\begin{figure}[ht]
	\centering
	\subfloat{
		\captionsetup[subfigure]{labelformat=empty, position=top}
		\subfloat[Iteration count]{\includegraphics[width=0.45\linewidth]{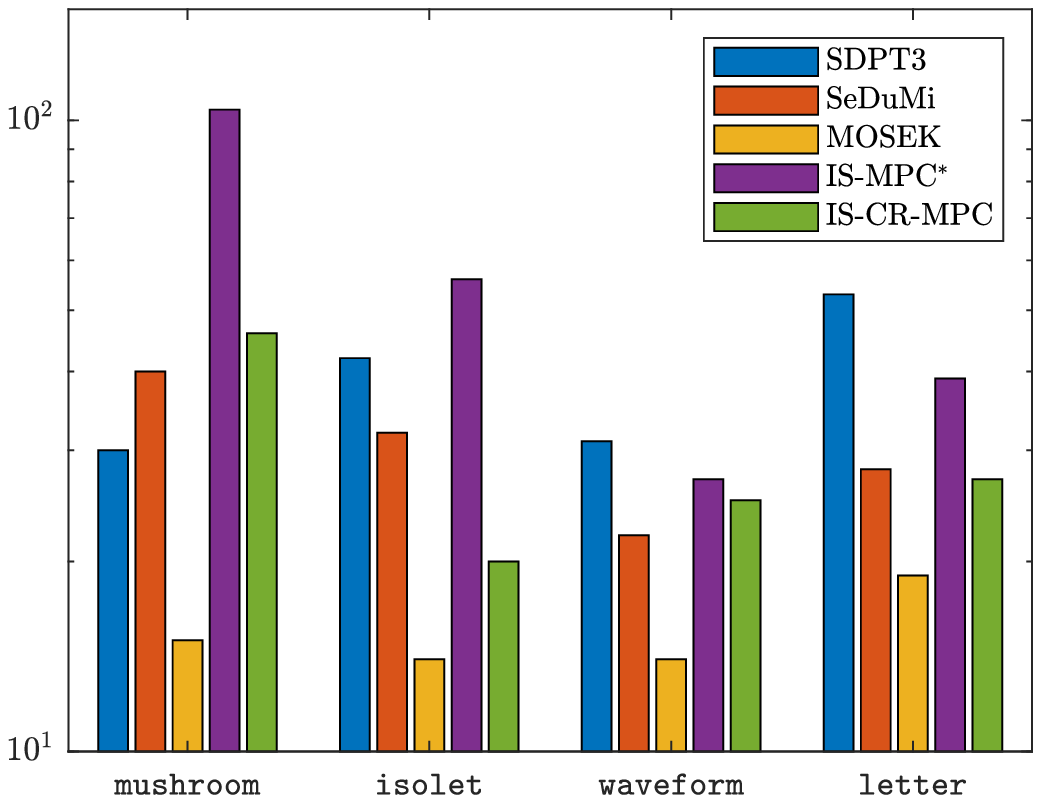}}~~~
		\subfloat[Computation time (sec)]{\includegraphics[width=0.45\linewidth]{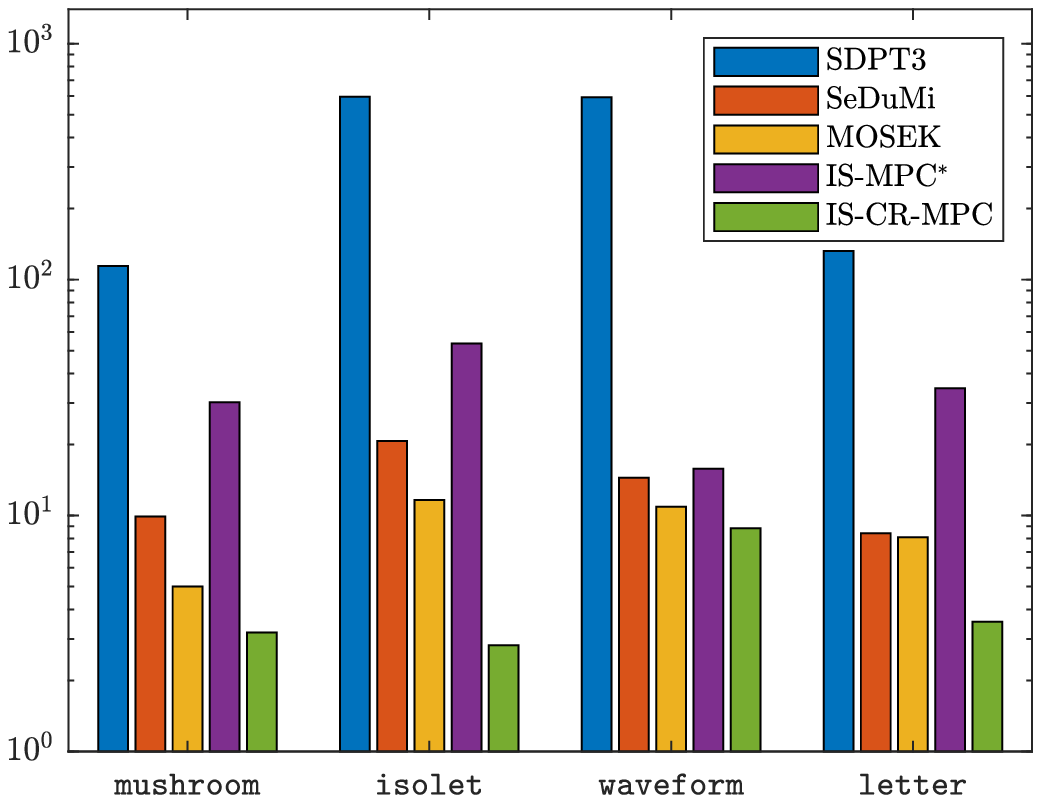}}}\\	
	\caption{\small Support-vector machine training problems.
		Numerical results of tested algorithms on the \texttt{MUSHROOM}, \texttt{ISOLET}, \texttt{WAVEFORM}, and \texttt{LETTER} data sets. In the figures, iteration counts and computation time are reported and plotted in logarithmic scale.}
	\label{fig:SVM}
\end{figure}

\section{Conclusion}
\label{sec:conclusion}

An exact-penalty-based framework for allowing for infeasible starts
in solving CQPs (including linear optimization problems) was proposed
and analyzed.  With negligible additional computational cost per
iteration, an infeasibility test is included that provides an
infeasibility certificate when the problem at hand is indeed infeasible.
The framework was tested on constrained-reduced MPC.
Numerical results suggest that, on imbalanced CQPs,
infeasible-start CR-MPC is significantly faster than SDPT3, SeDuMi,
and MOSEK.
It is also confirmed that constraint
reduction is very powerful on such problems.

\bibliographystyle{siam}
\bibliography{reference}

\end{document}